\documentclass[11pt]{article}
\usepackage{graphicx,graphics}

\usepackage{latexsym}
\usepackage{caption}
\usepackage{amsfonts}
\usepackage{amsmath,amsthm,enumerate, mathrsfs, mathabx, amssymb, indentfirst}

\usepackage{enumerate}
\usepackage{enumitem}
\usepackage[normalem]{ulem}
\usepackage{authblk}
\usepackage{pict2e} 
\usepackage{tikz}
\usetikzlibrary{arrows.meta}

\usepackage{subfig}
\usepackage{pgfplots}
\pgfplotsset{compat=1.15}
\usepgfplotslibrary{colormaps, external}
\usepackage{float}
\usepackage{tikz}
\usetikzlibrary{calc}
\usetikzlibrary{backgrounds, shapes.geometric}
\usetikzlibrary{arrows.meta,decorations.markings}
\usepackage{rotating}
\usetikzlibrary{positioning}
\usepackage{placeins}

\usepackage{pgffor}
\usepackage{tikzit}
\usepackage{tikz-cd}

\tikzstyle{Vertex}=[fill={rgb,255: red,142; green,142; blue,142}, draw=black, shape=rectangle]
\tikzstyle{white vertex}=[fill=white, draw=black, shape=circle]
\tikzstyle{grey vertex}=[fill={rgb,255: red,142; green,142; blue,142}, draw=black, shape=circle]
\tikzstyle{black vertex}=[fill=black, draw=black, shape=circle]
\tikzstyle{small black}=[fill=black, draw=black, shape=circle, scale=.5pt, tikzit draw=white]
\tikzstyle{black empty}=[fill=white, draw=black, shape=circle]
\tikzstyle{small black empty}=[fill=white, draw=black, shape=circle, scale=.5pt, tikzit draw={rgb,255: red,156; green,156; blue,156}, tikzit fill=white]

\tikzstyle{double blue edge}=[->, draw={rgb,255: red,6; green,118; blue,255}, latex-latex]
\tikzstyle{automorphism}=[->, -latex, draw=red]
\tikzstyle{new edge style 0}=[->, draw={rgb,255: red,6; green,118; blue,255}, -latex]
\tikzstyle{Edge}=[-]
\tikzstyle{Arc}=[->, -latex]
\tikzstyle{blue_edge}=[-, draw=blue, fill=none]
\tikzstyle{edge red}=[-, fill=none, draw=red]
\tikzstyle{green edge}=[-, draw={rgb,255: red,0; green,168; blue,0}, -latex]
\tikzstyle{big arc}=[->, line width=1.2pt, -latex]
\tikzstyle{new edge style 1}=[-, fill={rgb,255: red,191; green,191; blue,191}, opacity=0.5]
\tikzstyle{big edge}=[-, line width=1.2pt]
\tikzstyle{dashed_arc}=[->, -latex, dashed]

\usepackage{thmtools}
\usepackage{cleveref}

\usepackage{tikz-3dplot}
\pgfplotsset{compat=1.15}

\newtheorem{theorem}{Theorem}

\newtheorem{lemma}[theorem]{Lemma}
\newtheorem{corollary}[theorem]{Corollary}
\newtheorem{observation}[theorem]{Observation}
\newtheorem{proposition}[theorem]{Proposition}

\newtheorem{problem}[theorem]{Problem}

\renewcommand{\cal}[1]{\mathcal{#1}}
\newcommand{\C}[1]{\mathcal{C}}

\newcommand{\set}[1]{\left\{#1\right\}}
\newcommand{\curve}[1]{\langle#1\rangle}
\newcommand{\clock}{\mathbin{\tikz[baseline=-0.25ex, scale=0.9]{
  \draw[
    line width=0.6pt,
    -{Stealth[length=1.1mm, width=1mm]}
  ]
  (0,0) arc[start angle=-30, end angle=-335, radius=0.65ex]
  -- ++(305:0.45ex);
}}}

\usetikzlibrary{calc}
\tikzset{
	-parallel segment/.style={
		segment distance/.store in=\segDistance,
		segment pos/.store in=\segPos,
		segment length/.store in=\segLength,
		to path={
			($(\tikztostart)!\segPos!(\tikztotarget)!\segLength/2!(\tikztostart)!\segDistance!90:(\tikztotarget)$) -- 
			($(\tikztostart)!\segPos!(\tikztotarget)!\segLength/2!(\tikztotarget)!\segDistance!-90:(\tikztostart)$)  \tikztonodes
		}, 
		segment pos=.5,
		segment length=6ex,
		segment distance=1mm,
	}}

\tikzset{
	parallel segment/.style={
		segment distance/.store in=\segDistance,
		segment pos/.store in=\segPos,
		segment length/.store in=\segLength,
		to path={
			($(\tikztostart)!\segPos!(\tikztotarget)!\segLength/2!(\tikztostart)!\segDistance!-90:(\tikztotarget)$) -- 
			($(\tikztostart)!\segPos!(\tikztotarget)!\segLength/2!(\tikztotarget)!\segDistance!90:(\tikztostart)$)  \tikztonodes
		}, 
		segment pos=.5,
		segment length=6ex,
		segment distance=1mm,
}}

\DeclareFontEncoding{LS1}{}{}
\DeclareFontSubstitution{LS1}{stix}{m}{n}
\DeclareSymbolFont{stixletters}{LS1}{stix}{m}{it}
\DeclareMathAccent{\cev}{\mathord}{stixletters}{"91}
\DeclareMathAccent{\vec}{\mathord}{stixletters}{"92}

\usepackage{todonotes}

\definecolor{lightblue}{rgb}{0.68,0.85,0.9}

\begin{document}
	\setlength{\parindent}{0pt}	
	
	\title{Winding number and circular coloring}	
	\author[1,2]{Reza Naserasr}
	\author[1]{Cyril Pujol}
	\author[3]{Lujia Wang}
	
	\affil[1]{\small Université Paris Cité, CNRS, IRIF, F-75013, Paris, France. {Emails: \texttt{\{reza, cpujol\}@irif.fr}}}
	\affil[2]{\small Zhejiang Normal University, Jinhua, China. }
	\affil[3] {\small Hainan Bielefeld University of Applied Sciences, Danzhou, China. {Email: \texttt{lujia.wang@hibiuh.edu.cn}}}
	
	\date{\today} 
	
\maketitle 
\begin{abstract}
In 1996, Youngs proved a surprising theorem that quadrangulations of the projective plane could never have chromatic number exactly 3. This sparked a lot of interest, and  the result has been further developed in many directions over the past decades. For example, the result is strengthened by considering the circular chromatic number, which is a real-valued lower bound on the chromatic number: The circular chromatic number of a quadrangulation cannot be in the interval $(2,4)$. This parameter allows a generalization to larger even faces, for which a similar gap exists.

In this work, we place these results into a framework based on the notion of winding number using extensions of colorings to continuous mappings.This yields unified and simplified proofs of gaps in the circular chromatic number for graphs with a distinguished set of directed even cycles. This generalizes the setting of graphs embedded on surfaces where every face is even.

We further establish an analogous gap phenomenon when all faces are of a given odd length, previously known only in the case of triangulations. For example, we conclude that if $G$ is a graph embedded on the projective plane such that all faces are 5-cycles, then either its circular chromatic number is  $\frac{5}{2}$ or at least $3$, the former being the case only if $G$ is Eulerian and every noncontractible facial walk is of odd length.

We then extend this framework to signed graphs, where the analogy between vertex switching and $\mathbb{Z}_2$-actions provides a natural topological setting. We show that given a circular $r$-coloring $\phi$ of a signed graph, where $r$ is relatively small, a specific continuous extension of $\phi$ over the signed graph, viewed as a topological object, must satisfy certain requirements. This translates to lower bounds on the circular chromatic number of certain families of signed graphs. 

In particular, we show that the circular chromatic number of a signed graph embedded on the projective plane whose faces all have length $k$ cannot be in the interval $(2,\frac{2k}{k-2})$ if the faces are positive, nor in the interval $(\frac{2k}{k-1},\frac{2k-2}{k-2})$ if the faces are negative.

Applying this on signed triangulations of the projective plane where facial cycles are all positive, we conclude that any such signed graph is either balanced or has circular chromatic number 6. It is observed that this easily implies the aforementioned result for quadrangulations. For any positive integer $k$, we construct a triangulation of the projective plane where each face is a positive triangle and the negative girth is $k$. This is in contrast with the (unsigned) graph case since $K_6$ is the only 6-critical graph embedded on the projective plane and, moreover, C. Thomassen proved that there are only finitely many 6-critical graphs on any given surface. 

\end{abstract}

\section{Introduction}
	
	In the 1990s, several results of similar nature were produced by independent groups of researchers.  
	Extending topological methods, Stiebitz, after introducing generalized Mycielskian (also known as the cone cover), proved in \cite{S85} that if a graph is obtained from $K_2$ by repeatedly applying generalized Mycielski construction, then at each step its chromatic number is increased by 1. Although the original result is not accessible, the key ideas appear in \cite{GYJS}. Studying 4-chromatic graphs of large odd girth, Van Ngoc and Tuza introduced $M_k(C_{2k+1})$ as potentially the smallest 4-chromatic graph of odd girth $2k+1$, this remains an open problem until now. Studying quadrangulations of the projective plane, Youngs proved in \cite{Y96} that such graphs are never 3-chromatic. Prior to Youngs result, Payan, in \cite{P92} proved a similar statement that cube-like graphs (Cayley graphs on binary groups) are never 3-chromatic. We refer to \Cref{sec:Examples} for details on these families, but we note that $M_k(C_{2k+1})$ can be embedded on the projective plane as a quadrangulation. They are also the 4-chromatic graphs that Payan identified in \cite{P92} as 4-chromatic subgraphs of non-bipartite cube-like graphs (see also \cite{BNT15}). These results have been strengthened and extended in numerous ways in the past 3 decades from various points of view. Among others, results of \cite{MS02} and \cite{DGMVZ05} are of special interest as we further extend their view points.	
	
	Here we put these families of results in a larger setting and among other things, we propose extensions to signed graphs. Our follow up work shows that the application of algebraic topology to graph coloring is in fact better suited for extended notions of coloring based on the language of signed graphs. That is because of the analogy between $\mathbb{Z}_2$-actions on topological spaces (that are essential elements of connection to coloring) and vertex switching operation in signed graphs.
	
	For standard notion of graph theory we follow \cite{W96}. We settle our choice of terminology for signed graphs in the following subsection, referring to \cite{NRS15, NSZ21} for more. Formalizing the notion of winding number is done in \Cref{sec:winding}. In \Cref{sec:GeneralSetting} we study general properties of winding number for graphs. We apply the development to prove lower bounds on the circular chromatic number of graphs in \Cref{sec:Circular-Graphs}. Winding number for signed graphs together with application on the circular chromatic number is considered in \Cref{sec:Widing-Circular-Signed}. Special classes of signed graphs, which are candidate to be the smallest signed graphs of the given negative girth and circular chromatic number, are presented in \Cref{sec:Examples}.

	\subsection{Signed graphs}\label{sec:SignedGraphs}
	
	A signed graph $(G, \sigma)$ is a graph $G$ together with an assignment $\sigma$ of signs ($+$ or $-$) to its edges. The sign of a closed walk in $(G, \sigma)$ is the product of signs of its edges considering multiplicity. A signed graph (or a subset of its vertices) is said to be \emph{balanced} if it induces no negative cycle. Switching a vertex $v$ means multiplying the sign of each incident edge by a $-$. A key property in the study of signed graphs is that switching a vertex does not impact the signs of closed walks and cycle. The balanced chromatic number of a signed graph $(G,\sigma)$, denoted $\chi_b(G,\sigma)$, is the minimum number of balanced sets which covers $V(G)$.

	\subsection{Winding number}\label{sec:winding}
	
	The \emph{winding number} is an algebraic notion used in topology in order to distinguish closed curves that cannot continuously be transformed to one another. In intuitive terms, for a continuous function from $S^1$ to $S^1$ the winding number counts how many times the first, as a curve, ``winds around the circle''. For example in \Cref{fig:winding_number} the blue curve winds around the black circle twice. Depending on whether the wrapping around the circle is done in the clockwise direction or counterclockwise, the winding number is positive or negative, noting that the opposite directions cancel each other out. 	
	
\begin{figure}[!htb]
		\centering
		\scalebox{2}{\begin{tikzpicture}[scale=2]
	\draw[very thick] (0,0) circle (1);
	\def\radius{1.15+0.07*\t+0.1*(atan(40*(\t-0.45))/90))}
	\def\angle{100 + 720*\t - 80*exp(-800*(\t-0.45)^2)}

	\draw[thick,blue,samples=200,domain=0:1,smooth,variable=\t]
	plot ({\radius*-cos(\angle)},
	      {\radius*sin(\angle)});
\end{tikzpicture}}        
		\caption{Example of a curve with winding number 2}
		\label{fig:winding_number}
	\end{figure}

	The application of algebraic topology for providing lower bounds on the chromatic number of graphs satisfying certain topological properties has become an integral part of the theory of graph coloring since Lov\'asz' proof of the Kneser conjecture \cite{L78}. The simpler notion of winding number implicitly appears first (to our knowledge) in El-Zaher and Sauer's proof of Hedetniemi's conjecture for 4-chromatic graphs \cite{ES85}.
	
	Some of the proofs of the results mentioned above trace back to the winding number as well. In this work, we formalize the notion of winding number for the purpose of proving lower bounds for the circular chromatic number of graphs and signed graphs.  The approach has several benefits: first a better understanding of what makes the lower bounds work, secondly the resulting criteria applies to larger classes of graphs, third we extend the result to signed graphs. 
	
	\section{General setting for graphs}\label{sec:GeneralSetting}
	
	Throughout this paper a cycle $C$ is considered with a cyclic ordering of the vertices. This ordering corresponds to one of the two directions on the cycle. Direction is a specific orientation of $C$ where each vertex is tail of one arc and head of the other. The term orientation is also used on geometric objects such as a circle or a closed curve referring to one of the two possible ways the curve is traversed. A directed cycle embedded in the plane inherits term `clockwise' or `counterclockwise' from the geometric point of view.

	With $\mathcal{O}$ we denote a (geometric) circle on the plane which is endowed with an orientation referred to as the clockwise orientation. We normally assume the center is the origin and hence use $-x$ to denote the antipode of $x$ when $x$ is a point on $\mathcal{O}$. When the circumference of $\mathcal{O}$ is important, assuming that it is $r$, we will write $\mathcal{O}_r$ instead. Given a cycle $C$ and a mapping $\phi$ of its vertices to the points on $\mathcal{O}$, we consider linear extensions of $\phi$ to mappings of $C$ to $\mathcal{O}$ which linearly maps the edge $x_{i}x_{i+1}$ to one of the two arcs of the circle $\mathcal{O}$ determined by the points $\phi(x_{i})$ and $\phi(x_{i+1})$. 
	Assuming $C$ is a cycle of length $k$, it follows that in total there are $2^k$ linear extensions of $\phi$ as mappings of $C$ to $\mathcal{O}$. We employ the following notion of edge-coloring to refer to these extensions. Observe that subject to rotation of the circle, for a connected graph $G$, the mapping $\phi$ of vertices to $\mathcal{O}$ is also determined by deciding for each edge the length it must travel in the given direction for that edge.
	
	Let $I$ be a small interval on $\mathcal{O}$ which contains no element of $\phi(V(C))$. For most purposes we may assume $I$ consists of a single point. Linear extension of an edge $x_{i}x_{i+1}$ either fully contains $I$ or has no intersection with it. We associate a one-to-one correspondence between all such extensions and green-orange coloring of the edges of $C$ by associating the cases that $\phi(x_{i})\phi(x_{i+1})$ contains $I$ with green and the cases that do not contain it with orange. This is clearly a one-to-one correspondence between linear extensions of $(C, \phi)$ and colorings of the edges of $G$ with two colors. Assuming $\rho$ is such a coloring, we denote the associated closed curve with $\curve{C, \phi, \rho}$, but each of the two functions might get dropped from the notation when it is clear from the context. The winding number of $\curve{C, \phi, \rho}$ can be computed as the number of green edges that traverse $I$ in the clockwise direction minus the number of green edges that traverse $I$ in the counterclockwise direction. For the parity of the winding number we only need to consider the total number of green edges in which case $I$ could be considered as a single point.

	As winding number is the function of the number of green arcs (noting that direction matters), if two sets of cycles induce the same set of green arcs, then the parity of their total winding numbers are the same. This is the key idea of this work: given a set of cycles, under certain assumptions, we show that after choosing directions and then reshuffling the edges into a different set of cycles we get a winding number whose parity could not be the same as that of the original one. Simplest cycles to use in this reshuffling process are 2-cycles about which we have the following basic observation.

	\begin{observation}\label{obs:2-cycle}
		A (directed) 2-cycle $C$ with one green and one orange edge has winding number $\pm 1$. 
	\end{observation}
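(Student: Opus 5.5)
The plan is to compute the winding number of $\curve{C,\phi,\rho}$ directly from the counting rule stated just before the observation. The winding number is the number of green edges crossing $I$ clockwise minus the number crossing it counterclockwise. Since $C$ has exactly one green edge, this count has a single term. It therefore suffices to check that the curve really passes through $I$ exactly once, so that the count equals $\pm 1$ rather than $0$ or something larger.

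Write $C = x_1x_2x_1$ with directed edges $e_1 = x_1\to x_2$ and $e_2 = x_2\to x_1$. Say $e_1$ is green and $e_2$ is orange; the other case is symmetric. Set $a=\phi(x_1)$ and $b=\phi(x_2)$, and note that $I$ avoids both $a$ and $b$.

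\begin{itemize}
\item Suppose $a\neq b$. The points $a$ and $b$ split $\mathcal{O}$ into two arcs, $A$ containing $I$ and $B$ avoiding it. By the definition of the edge coloring, the green edge $e_1$ is mapped linearly onto $A$, and the orange edge $e_2$ is mapped onto $B$. Traversing $e_1$ from $a$ to $b$ along $A$ and then $e_2$ from $b$ back to $a$ along $B$ goes around $\mathcal{O}$ exactly once. So the curve is a degree-$\pm1$ loop, and the sign is determined by whether $A$ is traversed clockwise or counterclockwise.
\item Suppose $a=b$. Here the two arcs are the full circle and the constant arc. The green edge is the full turn and the orange edge is constant, so again the loop winds exactly once.
\end{itemize}

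Equivalently, in the counting formulation: only $e_1$ contains $I$, and it contributes exactly $+1$ or $-1$. The orange edge contributes nothing because it does not meet $I$. Hence the winding number is $\pm1$.

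The only delicate point is the convention that the linear image of an edge fully contains $I$ or misses it entirely. This is guaranteed because $I$ avoids $\phi(V(C))$. Once that is granted, the argument is immediate, and I expect no real obstacle.
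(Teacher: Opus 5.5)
Your proof is correct and matches the paper's reasoning. The paper gives no separate argument and treats the observation as immediate from computing the winding number as the signed count of green edges crossing $I$: the single green edge contributes $\pm 1$ and the orange edge contributes $0$, which is your ``counting formulation'' paragraph, while your geometric check of the cases $\phi(x_1)\neq\phi(x_2)$ and $\phi(x_1)=\phi(x_2)$ is consistent with the paper's conventions but not needed.
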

	  
	 Among $2^k$ possible linear extensions of $\phi$ two are of special interest for us (illustrated in \Cref{fib:clockShortExtension}): 
	 
	 \begin{itemize}
	 	
	 	\item When each edge $x_{i}x_{i+1}$ is mapped to $\phi(x_{i})\phi(x_{i+1})$ using the part of $\mathcal{O}$ in clockwise direction from $\phi(x_{i})$ to $\phi(x_{i+1})$, noting that $x_{n+1}=x_1$. This is well defined as long as $\phi(x_i) \neq \phi(x_{i+1})$.The winding number of this extension is denoted by $w\curve{\vec{C},\phi,\clock}$.
	 		 	
	 	\item When $x_{i}x_{i+1}$ is mapped to the shorter of the two arcs whose end points are $\phi(x_{i})$ and $\phi(x_{i+1})$. This is well defined as long as $\phi(x_i)$ and $\phi(x_{i+1})$ are not antipodal. The winding number of $(\vec C, \phi)$ in this convention is denoted by $w\curve{C, \phi,sh}$.
	 \end{itemize}

\begin{figure}[h]
\centering
\begin{tikzpicture}[scale=1.]
\def\r{1}

\node[fill=white, draw=black, shape=circle,label=above:$x$]   (xnode)  at (0,0) {};
\node[fill=white, draw=black, shape=circle,label=above:$y$]   (ynode)  at (2,0) {};
\draw (xnode) -- (ynode);

\draw[-{Stealth[length=2mm]}] (0.9,0) -- ++ (0.2,0);   

\coordinate (C) at (4,0);
\draw (C) circle (\r);
\node[circle,fill,inner sep=1pt,label=above:$\phi(x)$]   (x)  at ($(C)+(90:\r)$) {};
\node[circle,fill,inner sep=1pt,label={[xshift=-3pt,yshift=2pt]above:$\phi(y)$}]  (y)  at ($(C)+(150:\r)$) {};
\node[fill,inner sep=1pt,label=right:$I$]  (i)  at ($(C)+(0:\r)$) {};

\draw[orange,line width=.8mm] (x) arc[start angle=90,end angle=150,radius=1];

\coordinate (D) at (7,0);
\draw (D) circle (\r);
\node[circle,fill,inner sep=1pt,label=above:$\phi(x)$]   (x)  at ($(D)+(90:\r)$) {};
\node[circle,fill,inner sep=1pt,label={[xshift=-3pt,yshift=2pt]above:$\phi(y)$}]  (y)  at ($(D)+(150:\r)$) {};

\draw[black!30!green,line width=.8mm] (x) arc[start angle=90,end angle=150-360,radius=1];
\node[fill,inner sep=1pt,label=right:$I$]  (i)  at ($(D)+(0:\r)$) {};

\end{tikzpicture}
\caption{Shortest (left) and clockwise (right) extension of an edge $xy$}
\label{fib:clockShortExtension}
\end{figure}
	 
	 It follows from the definition that:
	 
	 \begin{observation}\label{obs:Clockwise+CounterCW}
	 	In the clockwise (or counterclockwise) extension $\curve{\vec{C},\phi,\clock}$ the winding number is equal to the number of edges colored green.
	 \end{observation}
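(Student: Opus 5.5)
The plan is to read the winding number directly off the edge-coloring description set up just before \Cref{obs:2-cycle}. Recall that, for a linear extension $\curve{C,\phi,\rho}$ and an interval $I$ avoiding $\phi(V(C))$, the winding number equals the number of green edges crossing $I$ clockwise minus the number crossing it counterclockwise. So it suffices to show two things for the clockwise extension $\curve{\vec{C},\phi,\clock}$: every green edge crosses $I$ in the clockwise direction, and the count is the same for every admissible choice of $I$.

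First I would fix the direction of $\vec C=x_1x_2\cdots x_kx_1$. By definition, each edge $x_ix_{i+1}$ is mapped onto the arc of $\mathcal{O}$ traversed clockwise from $\phi(x_i)$ to $\phi(x_{i+1})$. This is well defined because $\phi(x_i)\neq\phi(x_{i+1})$. Every edge is therefore traversed clockwise as $\vec C$ is followed in its direction. Hence any edge whose image contains $I$, that is, any green edge, passes through $I$ in the clockwise direction, and no edge passes through $I$ counterclockwise.

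Next I would substitute this into the counting formula. The subtracted term vanishes, so $w\curve{\vec{C},\phi,\clock}$ equals the number of green edges. For the counterclockwise extension, the same argument with the orientation reversed shows that every green edge crosses $I$ counterclockwise. The winding number is then minus the number of green edges, which in absolute value is again that number; this is the sense in which the parenthetical statement holds.

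The only point needing care is that the count of green edges does not depend on the choice of $I$, since the winding number itself is independent of $I$. To verify this directly, move $I$ across a point $\phi(x_i)$. The edge $x_{i-1}x_i$ ends at $\phi(x_i)$ and the edge $x_ix_{i+1}$ starts there, so one of them stops covering $I$ exactly when the other starts. The number of green edges is therefore unchanged. This also identifies the quantity as $\frac{1}{r}\sum_i \ell_i$, where $\ell_i$ is the clockwise length from $\phi(x_i)$ to $\phi(x_{i+1})$ on $\mathcal{O}_r$, which gives a useful consistency check. I expect no real obstacle; the statement is essentially an unwinding of the definitions.
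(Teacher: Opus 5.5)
Your proof is correct and follows the same route as the paper, which simply notes that the observation follows from the definition: in the clockwise extension every edge traverses $\mathcal{O}$ clockwise, so no green edge is subtracted and the winding number is the green count. Your reading of the parenthetical (the counterclockwise extension gives minus the number of green edges) is the right one, and your check that the count does not depend on $I$ is correct but redundant, since the paper already treats the green-count formula as valid for every admissible $I$.
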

	 
	 The next basic but important observation is about the impact of changing the direction of the cycle $\vec{C}$. Let $\cev{C}$ be the directed cycle obtained from $\vec{C}$ by flipping its direction. 
	   
	 \begin{proposition}\label{prop:OpposingDirections}	 		
	 	For any directed cycle $\vec C$ and any mapping $\phi$ of its vertices to the circle $\mathcal{O}$ we have $-w\curve{\cev{C},\phi,\clock}+w\curve{\vec{C},\phi,\clock}=|C|$.
	 \end{proposition}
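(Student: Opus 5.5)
The plan is to compare the two clockwise extensions edge by edge, using \Cref{obs:Clockwise+CounterCW} together with the green/orange description of a linear extension. Write $\vec C = x_1x_2\cdots x_kx_1$ with $k=|C|$, and fix a point $I$ on $\mathcal{O}$ that avoids $\phi(V(C))$. We assume $\phi(x_i)\neq\phi(x_{i+1})$ for all $i$, so that both clockwise extensions are defined. In $\curve{\vec C,\phi,\clock}$ the edge $x_ix_{i+1}$ is mapped onto the clockwise arc from $\phi(x_i)$ to $\phi(x_{i+1})$. In $\curve{\cev C,\phi,\clock}$ the same edge is traversed from $x_{i+1}$ to $x_i$ and is mapped onto the clockwise arc from $\phi(x_{i+1})$ to $\phi(x_i)$. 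These two arcs are the two complementary arcs of $\mathcal{O}$ determined by the distinct points $\phi(x_i)$ and $\phi(x_{i+1})$.

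The key step is the following claim: each edge of $C$ is green in exactly one of the two extensions. Since $I$ avoids both endpoints, it lies in exactly one of the two complementary arcs, and this gives the claim.

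Next I would apply \Cref{obs:Clockwise+CounterCW}. In a clockwise extension every green edge crosses $I$ in the clockwise direction. So $w\curve{\vec C,\phi,\clock}$ equals the number $g$ of green edges of the first extension, and the second extension has exactly $k-g$ green edges.

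The main obstacle is the sign convention for the second extension, and I would settle it explicitly. The winding number of a closed curve is computed with respect to the fixed orientation of $\mathcal{O}$. In $\curve{\cev C,\phi,\clock}$ each edge is still mapped clockwise, so \Cref{obs:Clockwise+CounterCW} read literally would give $+(k-g)$. With that reading the identity would become $w\curve{\vec C}+w\curve{\cev C}=k$, which disagrees with the stated form.

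The stated form $-w\curve{\cev C,\phi,\clock}+w\curve{\vec C,\phi,\clock}=k$ therefore implies that $w\curve{\cev C,\phi,\clock}$ counts green edges negatively. This happens if, when the direction of the cycle is reversed, ``clockwise'' is understood relative to the reversed traversal, that is, the orientation of $\mathcal{O}$ is reversed along with $\vec C$. Equivalently, $\curve{\cev C,\phi,\clock}$ is the reverse, as a parametrised curve, of the counterclockwise extension of $\vec C$. Under that convention $w\curve{\cev C,\phi,\clock}=-(k-g)$, and then $-w\curve{\cev C,\phi,\clock}+w\curve{\vec C,\phi,\clock}=(k-g)+g=k$.

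To present the proof I would fix this convention first. I would then state the complementarity of the two arcs, count green edges via \Cref{obs:Clockwise+CounterCW}, and add the two counts. As a sanity check, I would test the conventions on a 2-cycle, in the spirit of \Cref{obs:2-cycle}.
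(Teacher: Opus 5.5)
Your core argument is the same as the paper's. Its proof consists exactly of your key claim (an edge is green in $\curve{\vec C,\phi,\clock}$ iff it is orange in $\curve{\cev C,\phi,\clock}$), followed by an appeal to \Cref{obs:2-cycle} and \Cref{obs:Clockwise+CounterCW}. Your diagnosis of the sign is also correct. With the definitions as literally written, this argument only gives $w\curve{\vec C,\phi,\clock}+w\curve{\cev C,\phi,\clock}=|C|$. The paper is itself inconsistent here: right after the Lemma it writes $w\curve{\cev C,\clock}\geq\lceil k/r\rceil$, which is the positive reading, whereas \Cref{prop:2k-cycle} and \Cref{prop:2k+1-cycle-I} use the negative one. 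All later uses only need the convention-free fact that the two green counts sum to $|C|$.

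The step that fails is your convention for recovering the minus sign. Your two descriptions are not equivalent, and neither produces $-(k-g)$.

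\emph{First description.} Suppose ``clockwise'' is taken with respect to the reversed orientation of $\mathcal O$ when extending $\cev C$. Then $x_{i+1}x_i$ is mapped onto the same arc that $x_ix_{i+1}$ uses in $\curve{\vec C,\phi,\clock}$, so the curve is just $\curve{\vec C,\phi,\clock}$ run backwards. The green sets then coincide rather than being complementary, which breaks your key claim. The winding number is $-g$ (or $+g$ measured in the reversed orientation), not $-(k-g)$.

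\emph{Second description.} The reverse of the counterclockwise extension of $\vec C$ is precisely the ordinary $\curve{\cev C,\phi,\clock}$. Its winding number is $+(k-g)$, which is the literal reading you had just rejected.

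\emph{What works.} Keep the arcs that are clockwise for the fixed orientation, so complementarity holds, but traverse them along $\vec C$. That is, take the counterclockwise extension of $\vec C$ itself, not its reverse. Equivalently, declare $w\curve{\cev C,\phi,\clock}$ to be minus the winding number of the ordinary clockwise extension of $\cev C$. Then each of the $k-g$ green arcs is crossed counterclockwise, and $-w\curve{\cev C,\phi,\clock}+w\curve{\vec C,\phi,\clock}=(k-g)+g=k$.

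A 2-cycle is too symmetric to validate the convention: there $g=|C|-g=1$, so it cannot distinguish $-g$ from $-(|C|-g)$. Test on a triangle with $g=1$ instead. The cleanest write-up proves the green-count identity first and only then fixes the sign convention explicitly.
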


	 \begin{proof}
	 	That is because, with a fixed mapping $\phi$ of the vertices of $C$ to $\mathcal{O}$, an edge $x_{i}x_{i+1}$ is colored green in $\curve{\vec{C}, \phi, \clock}$ if and only if it is colored orange in $\curve{\cev{C}, \phi, \clock}$. The rest follows from \Cref{obs:2-cycle} and \Cref{obs:Clockwise+CounterCW}.
 	 \end{proof}
 	 
 	 The next crucial observation is that if the mapping $\phi$ of \Cref{prop:OpposingDirections} satisfies certain structural conditions, then two values of $w\curve{\vec{C},\clock}$ and $w\curve{\cev{C},\clock}$ are as close as possible. This is where the condition of $\phi$ being a circular coloring kicks in. The precise statement is given after recalling the definition of circular coloring.
 	 
 	 A mapping $\phi$ of $V(G)$ to $\mathcal{O}_r$ is a said to be a \emph{circular $r$-coloring} if for each edge $xy$ of $G$ each of the two arcs of $\mathcal{O}_r$ with end points $\phi(x)$ and $\phi(y)$ is of length at least 1. The assignment of direction and corresponding length to the edges would be called an \emph{$r$-tension}. The condition on the length of travel for each edge leads to the following.
 	 
 	 \begin{lemma}
 	 	If $\phi$ is a circular $r$-coloring of a directed cycle of length $k$, then $w\curve{\vec{C},\clock}\geq \lceil \frac{k}{r} \rceil $. 
 	 \end{lemma}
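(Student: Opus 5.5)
The plan is to compute the total length traversed by the clockwise extension $\curve{\vec{C},\phi,\clock}$ in two ways. Write $\vec C = x_1x_2\cdots x_kx_1$. For each edge $x_ix_{i+1}$ (indices mod $k$), let $\ell_i\in(0,r)$ be the length of the clockwise arc of $\mathcal{O}_r$ from $\phi(x_i)$ to $\phi(x_{i+1})$. This is exactly the value of the $r$-tension on that edge when it is directed along $\vec C$. Since $\phi$ is a circular $r$-coloring, both arcs between $\phi(x_i)$ and $\phi(x_{i+1})$ have length at least $1$; in particular $\ell_i\ge 1$, and $\phi(x_i)\neq\phi(x_{i+1})$, so the clockwise extension is well defined.

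The key step is the identity
\[\sum_{i=1}^{k}\ell_i \;=\; r\cdot w\curve{\vec{C},\phi,\clock}.\]
To prove it, lift the closed curve to the universal cover $\mathbb{R}\to\mathcal{O}_r$ (given by $t\mapsto t \bmod r$). Start at a lift of $\phi(x_1)$ and move forward by $\ell_1,\ell_2,\dots,\ell_k$ in turn. The endpoint is again a lift of $\phi(x_1)$, so the total displacement $\sum\ell_i$ is an integer multiple $mr$ of $r$. The curve is monotone clockwise, so it crosses the marked point $I$ exactly once per full turn. Hence the number of green edges equals $m$, and by \Cref{obs:Clockwise+CounterCW} this is the winding number.

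This is the only point needing care. If one prefers the green/orange formalism directly: each edge is green exactly when its clockwise arc passes over $I$. Measure positions on $\mathcal{O}_r$ as $p(x_i)\in[0,r)$, starting from $I$ in the clockwise direction. Then $\ell_i=p(x_{i+1})-p(x_i)$ if the edge is orange, and $\ell_i=p(x_{i+1})-p(x_i)+r$ if it is green. Summing around the cycle, the $p$ terms telescope and leave $\sum\ell_i=r\cdot\#\{\text{green edges}\}$.

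Combining the two facts gives $r\,w\curve{\vec{C},\clock}=\sum\ell_i\ge k$, so $w\ge k/r$. Since $w$ is an integer, $w\ge\lceil k/r\rceil$.
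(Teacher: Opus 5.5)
Your proof is correct and follows the same route as the paper. Each edge travels clockwise a distance of at least $1$, so the total travel, which equals $r$ times the winding number, is at least $k$, and integrality of the winding number gives the ceiling. Your telescoping argument only makes explicit the identity $\sum_i \ell_i = r\cdot w$, which the paper's one-line proof leaves implicit.
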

 	 
 	 \begin{proof}
 	 	That is simply because each edge $x_{i}x_{i+1}$ of the cycle must travel in the clockwise direction a distance of at least 1 and that the winding number is an integer.  
 	 \end{proof}
 	 
 	 Similarly, $w\curve{\cev{C},\clock}\geq \lceil \frac{k}{r} \rceil$. Thus if $r$ is such that $\lceil \frac{k}{r} \rceil \geq \frac{k}{2}$, then, by  \Cref{prop:OpposingDirections}, the values of the pair $w\curve{\vec{C},\clock}, w\curve{\cev{C},\clock}$ are determined. This is made precise based on the parity of $|C|$ in the following two lemmas.

 	 \begin{proposition}\label{prop:2k-cycle}
 	 	If $r<\frac{2k}{k-1}$ and $\phi$ is a circular $r$-coloring of a $2k$-cycle $C_{2k}$, then $$-w\curve{\cev{C}_{2k}, \phi, \clock}=w\curve{\vec{C}_{2k}, \phi, \clock}=k.$$
 	 \end{proposition}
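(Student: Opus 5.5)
The plan is to combine the lower bound of the preceding lemma with the exact relation of \Cref{prop:OpposingDirections}, and to observe that for $r<\frac{2k}{k-1}$ the lower bound is already half of the total. Only the two inequalities are needed; no structural information about $\phi$ beyond its being a circular $r$-coloring is used.

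\textbf{Step 1: bound each direction from below.} Since $r<\frac{2k}{k-1}$, we have $\frac{2k}{r}>k-1$, and hence $\lceil \frac{2k}{r}\rceil \geq k$. The preceding lemma, applied to the $2k$-cycle, gives that the clockwise extension of $\vec{C}_{2k}$ has at least $k$ green edges. This holds because every edge travels clockwise a length of at least $1$, so the total clockwise travel is at least $2k>(k-1)r$, and the curve must pass $I$ at least $k$ times. By the remark right after the lemma, the same bound holds for $\cev{C}_{2k}$: it has at least $k$ green edges, each traversing $I$ in the opposite sense.

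\textbf{Step 2: use complementarity.} As in the proof of \Cref{prop:OpposingDirections}, each edge $x_ix_{i+1}$ is green in $\curve{\vec{C}_{2k},\phi,\clock}$ exactly when it is orange in $\curve{\cev{C}_{2k},\phi,\clock}$. Hence the two numbers of green edges, call them $g$ and $g'$, satisfy $g+g'=2k$. By \Cref{obs:Clockwise+CounterCW}, $w\curve{\vec{C}_{2k},\phi,\clock}=g$, while $w\curve{\cev{C}_{2k},\phi,\clock}$ has absolute value $g'$. With the sign convention encoded in \Cref{prop:OpposingDirections}, this means $-w\curve{\cev{C}_{2k},\phi,\clock}=g'$. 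Combining $g,g'\ge k$ with $g+g'=2k$ forces $g=g'=k$, which is exactly the claimed equality.

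\textbf{Main obstacle.} There is no real difficulty; the only point needing care is the bookkeeping of signs for the reversed cycle. One must make sure that the lower bound $\lceil \frac{2k}{r}\rceil$ is applied to the number of green edges (the absolute value of the winding number), and that the sign of $w\curve{\cev{C}_{2k},\phi,\clock}$ is read consistently with \Cref{prop:OpposingDirections}. I would state the argument in terms of green-edge counts $g,g'$ and translate to winding numbers only at the end, which avoids the sign ambiguity.
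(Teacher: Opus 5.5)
Your proposal is correct and matches the argument the paper sketches just before the proposition. There, the proposition is derived from the lower bound $\lceil |C|/r\rceil$ on each direction combined with \Cref{prop:OpposingDirections}, which is exactly your $g,g'\ge k$ together with $g+g'=2k$; counting green edges rather than signed winding numbers is a good choice, since it avoids the paper's slightly inconsistent sign convention for the reversed cycle.
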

 	 
 	 In other words, in the green-orange coloring of the edges corresponding to these two extensions of $C$, exactly half the edges are green in $\vec{C}$ and the other half in $\cev{C}$.

 	  \begin{proposition}\label{prop:2k+1-cycle-I}
 	 	If $r<\frac{2k+1}{k-1}$ and $\phi$ is a circular $r$-coloring of a directed $(2k+1)$-cycle $C_{2k+1}$, then $$\{-w\curve{\cev{C}_{2k+1}, \phi, \clock}, w\curve{\vec{C}_{2k+1}, \phi, \clock}\}=\{k, k+1\}.$$
 	 \end{proposition}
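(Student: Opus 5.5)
The plan is to run the argument of \Cref{prop:2k-cycle} with odd length. Everything reduces to two facts about the clockwise extensions of $\vec{C}_{2k+1}$ and $\cev{C}_{2k+1}$: their green-edge counts add up to $2k+1$, and each count is at least $k$ because $r$ is small.

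First, fix the circular $r$-coloring $\phi$ and a point $I$ of $\mathcal{O}_r$ not in $\phi(V(C))$. Let $g$ be the number of green edges of $\curve{\vec{C}_{2k+1},\phi,\clock}$ and $g'$ the number of green edges of $\curve{\cev{C}_{2k+1},\phi,\clock}$. By \Cref{obs:Clockwise+CounterCW}, $g = w\curve{\vec{C}_{2k+1},\phi,\clock}$, and $g'$ equals $w\curve{\cev{C}_{2k+1},\phi,\clock}$ up to the sign convention used in \Cref{prop:OpposingDirections}; with that convention, $-w\curve{\cev{C}_{2k+1},\phi,\clock}=g'$. As in the proof of \Cref{prop:OpposingDirections}, for a fixed edge $x_ix_{i+1}$ the clockwise arc from $\phi(x_i)$ to $\phi(x_{i+1})$ and the clockwise arc from $\phi(x_{i+1})$ to $\phi(x_i)$ together make up the whole circle. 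Hence exactly one of them contains $I$, so each edge is green in exactly one of the two extensions. This gives $g+g'=2k+1$, which is \Cref{prop:OpposingDirections} in this notation.

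Second, I lower-bound each of $g$ and $g'$ using the preceding lemma. In the clockwise extension of $\vec{C}_{2k+1}$, the total clockwise distance travelled is exactly $r\cdot g$, since the closed curve wraps around $\mathcal{O}_r$ exactly $g$ times. Each edge contributes at least $1$ to this distance, because $\phi$ is a circular $r$-coloring. So $rg\ge 2k+1$, that is, $g\ge \frac{2k+1}{r}$. The hypothesis $r<\frac{2k+1}{k-1}$ gives $\frac{2k+1}{r}>k-1$, and since $g$ is an integer, $g\ge k$. The same argument applied to $\cev{C}_{2k+1}$ gives $g'\ge k$.

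Combining $g,g'\ge k$ with $g+g'=2k+1$ forces $\{g,g'\}=\{k,k+1\}$, which is the claimed equality of sets. The only point needing care is the sign bookkeeping for $w\curve{\cev{C},\phi,\clock}$, which must match the convention of \Cref{prop:OpposingDirections}. I expect this to be the main (if minor) obstacle. To handle it, I would phrase the whole argument in terms of the green counts $g,g'$ and translate back to winding numbers only at the end.
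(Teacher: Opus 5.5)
Your proof is correct and is essentially the argument the paper intends. The paper states the proposition without a separate proof and derives it from the lemma $w\curve{\vec{C},\clock}\geq\lceil |C|/r\rceil$ applied to both directions together with \Cref{prop:OpposingDirections}: each green count is at least $\lceil (2k+1)/r\rceil\geq k$ and the two counts sum to $2k+1$, which is exactly your argument. Phrasing everything through the green counts $g,g'$ is a sensible way to sidestep the paper's slightly inconsistent sign convention for $w\curve{\cev{C},\phi,\clock}$.
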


 	 As will be observed, the conditions of \Cref{prop:2k-cycle} are strong enough to prove lower bounds on the circular chromatic number of graphs embedded on the projective plane where each face is an even cycle of prescribed length. This is a direction widely studied in the literature. However, the conditions of \Cref{prop:2k+1-cycle-I} are not strong enough for results of similar nature. For this we need the following stronger conclusion which requires further restriction on $r$. This observation allows us to extend one of the classical results in graph coloring first proved in \cite{H1898}: a planar triangulation is 3-colorable if and only if it is Eulerian. 
 	 	
      \begin{proposition}\label{prop:2k+1-cycle-II}
     	If $r<\frac{4k}{2k-1}$ and $\phi$ is a circular $r$-coloring of a directed $(2k+1)$-cycle $C_{2k+1}$, then $\{-w\curve{\cev{C}_{2k+1}, \phi, \clock}, w\curve{\vec{C}_{2k+1}, \phi, \clock}\}=\{k, k+1\}$. Furthermore, assuming $ w\curve{\vec{C}_{2k+1}, \phi, \clock}=k$ each edge of $\vec{C}_{2k+1}$ traverses a clockwise distance of less than $\frac{r}{2}$.  
     \end{proposition}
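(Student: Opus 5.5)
Let $n=2k+1$, write $\vec{C}=\vec{C}_{2k+1}$ with vertices $x_1,\dots,x_n$, and let $d_i\in(0,r)$ be the clockwise distance from $\phi(x_i)$ to $\phi(x_{i+1})$. Since $\phi$ is a circular $r$-coloring, both arcs between $\phi(x_i)$ and $\phi(x_{i+1})$ have length at least $1$, so $1\le d_i\le r-1$. Going once around $\vec{C}$ clockwise returns to the starting point, so
$$\sum_{i=1}^n d_i = r\cdot w\curve{\vec{C},\phi,\clock}.$$
The plan is to turn this identity into the two numerical claims. First, since $r<\frac{4k}{2k-1}\le\frac{2k+1}{k-1}$ (check: $4k(k-1)\le (2k+1)(2k-1)$ reduces to $-4k\le -1$), \Cref{prop:2k+1-cycle-I} already gives $\{-w\curve{\cev{C},\phi,\clock},w\curve{\vec{C},\phi,\clock}\}=\{k,k+1\}$. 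If one prefers a direct argument, it is the earlier lemma $w\ge\lceil n/r\rceil$ applied in both directions, combined with \Cref{prop:OpposingDirections}, which says the two values sum to $2k+1$. Here $r<\frac{4k}{2k-1}<\frac{2k+1}{k}$ gives $\lceil n/r\rceil\ge k$, which forces the pair to be $\{k,k+1\}$.

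For the second claim, assume $w\curve{\vec{C},\phi,\clock}=k$, so $\sum_i d_i=kr$. Suppose, for contradiction, that some edge has $d_j\ge r/2$. The remaining $2k$ edges each travel at least $1$, so
$$kr=\sum_i d_i\ge \frac{r}{2}+2k,$$
that is, $r\left(k-\tfrac12\right)\ge 2k$, or $r\ge\frac{4k}{2k-1}$. This contradicts the hypothesis, so every $d_i<r/2$.

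The threshold $\frac{4k}{2k-1}$ falls out of this single estimate, so I expect no real obstacle. The only care needed is in the conventions: $d_i$ must lie strictly between $0$ and $r$ because adjacent vertices receive distinct points, and the winding number of the clockwise extension must equal the total clockwise distance divided by $r$. Both follow directly from the definitions and from \Cref{obs:Clockwise+CounterCW}.
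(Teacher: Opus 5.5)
Your proof is correct and follows the paper's argument: the first part comes from \Cref{prop:2k+1-cycle-I} via $\frac{4k}{2k-1}\le\frac{2k+1}{k-1}$, and the furthermore part uses the same estimate (one edge of length at least $\frac r2$ plus $2k$ edges of length at least $1$ forces a total travel greater than $kr$). One slip, in your optional direct argument only: $\frac{4k}{2k-1}<\frac{2k+1}{k}$ is false (the reverse holds, since $(2k+1)(2k-1)=4k^2-1<4k^2$), and the condition you actually need for $\lceil (2k+1)/r\rceil\ge k$ is $r<\frac{2k+1}{k-1}$, which you had already verified.
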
	
	
	\begin{proof}
		Since the condition on $r$ is more restrictive than $r<\frac{2k+1}{k-1}$, the first part follows from \Cref{prop:2k+1-cycle-I}. For the furthermore part, assume to the contrary that one edge traverses a distance of $\frac{r}{2}$ or more. Then even if each of the other edges traverses the minimum required clockwise distance of $1$, the total travel length will be at least $2k+\frac{r}{2}$. Combined with the condition of $r<\frac{4k}{2k-1}$, this implies a winding number of at least $k+1$.
	\end{proof}
	
	In other words, if $\phi$ is a circular $r$-coloring of a $(2k+1)$-cycle with $r<\frac{4k}{2k-1}$, then either clockwise extension of $\phi$ or counterclockwise extension of $\phi$ coincides with the shortest extension. This leads to the following corollary which presents our main use of this proposition.
	
	\begin{corollary}\label{cor:OddCyclesCommonEdge}
		Let $C$ and $C'$ be two $(2k+1)$-cycles sharing an edge $e$ and assume $\phi$ is circular $r$-coloring of the subgraph induced by the two where $r<\frac{4k}{2k-1}$. Then if we choose a direction for each of $C$ and $C'$  in such a way that the clockwise extension of $\phi$ will have winding number $k$, then the common edge $e$ will be oriented the same on both of them.
	\end{corollary}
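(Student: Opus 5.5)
The plan is to apply the furthermore part of \Cref{prop:2k+1-cycle-II} to each of the two cycles separately. This pins down, for each cycle, the direction in which the common edge $e$ is traversed purely in terms of $\phi(x)$ and $\phi(y)$, where $e=xy$. Since that description does not depend on the rest of the cycle, the two directions must agree.

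Fix the directions of $C$ and $C'$ as in the statement, so that $w\curve{\vec{C},\phi,\clock}=w\curve{\vec{C'},\phi,\clock}=k$. Such a choice exists by the first part of \Cref{prop:2k+1-cycle-II}, which says exactly one of the two directions of a $(2k+1)$-cycle gives winding number $k$. The restriction of $\phi$ to each cycle is a circular $r$-coloring with $r<\frac{4k}{2k-1}$. By the furthermore part of \Cref{prop:2k+1-cycle-II}, every edge of $\vec{C}$ (and of $\vec{C'}$) then travels a clockwise distance strictly less than $\frac{r}{2}$ from its tail to its head.

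Now look at $e=xy$. The clockwise distance $d$ from $\phi(x)$ to $\phi(y)$ and the clockwise distance from $\phi(y)$ to $\phi(x)$ sum to $r$. Both are at least $1$ since $\phi$ is a circular coloring, though that is not needed here. The two distances are $d$ and $r-d$.

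\begin{itemize}
\item If $d=\frac{r}{2}$, the points are antipodal and neither orientation of $e$ travels less than $\frac{r}{2}$. This contradicts the proposition applied to $C$, so this case cannot occur.
\item Otherwise exactly one of $d$ and $r-d$ is less than $\frac{r}{2}$. In $\vec{C}$, $e$ must be oriented in the direction realizing that smaller clockwise distance, and the same holds in $\vec{C'}$.
\end{itemize}

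This orientation is determined solely by $\phi(x)$ and $\phi(y)$, so $e$ is oriented the same way on both cycles.

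There is no serious obstacle; the only point needing care is excluding the antipodal case, which the strict inequality in \Cref{prop:2k+1-cycle-II} handles. In short, under the chosen directions each cycle's clockwise extension coincides with the shortest extension, so $e$ is traversed along its shorter arc in both.
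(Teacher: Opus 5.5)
Your proof is correct and follows the paper's intended argument. By the furthermore part of \Cref{prop:2k+1-cycle-II}, once each cycle is directed to have winding number $k$, its clockwise extension coincides with the shortest extension. The direction of $e$ on each cycle is therefore forced by $\phi(x)$ and $\phi(y)$ alone. The paper states this only as an immediate consequence; your explicit exclusion of the antipodal case via the strict inequality fills in the one detail it leaves implicit.
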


	We are now ready to present structural conditions on a set of cycles which provides lower bounds on the circular chromatic of a graphs. We present the conditions separately for even and odd cycles.
	
	\section{Lower bound on circular chromatic number of graphs}\label{sec:Circular-Graphs}

	Let $\mathcal{C}$ be a set of cycles of a graph $G$ and let $D$ be an assignment of a direction to each cycle in $\mathcal{C}$. For each edge $xy$ of $G$ we define its $(\mathcal{C}, D)$-degree to be the difference of the number of times it is oriented $x\to y$ and $y\to x$. We are specially interested in cycle systems where the degree of each edge is even. Observe that this condition is independent from the choice of directions $D$. Here we provide lower bounds for the circular chromatic number of graphs provided they possess cycle systems of certain type. Then we show how to apply the results on families of graphs with special embedding on surfaces. But before going further we need to extend the classical notion of dual. 
	
	\subsection{Dual}  
	
	Given a system $(\mathcal{C}, D)$ of a set of cycles of a graph $G$ together with directions $D$, we define the \emph{dual} of the system to be a signed multigraph $S(\mathcal{C}, D)$ as follows. 
	
	\begin{itemize}
		\item The cycles in $\mathcal{C}$ are the vertices of the dual.
		\item A pair, $C_1$ and $C_2$, of cycles sharing an edge $uv$ are connected by an edge labeled $uv$ whose sign is determined by the following rules.
		\item If under the directions assigned by $D$ to $C_1$ and $C_2$ the edge $uv$ is directed in the same direction, then the associated edge in the dual is positive. If they are assigned opposing direction, then the associated edge is negative.  
	\end{itemize} 
	
	Observe that this extends the classic notion of the ``dual'' for planar graphs. To embed coloring notions considered in this work in the larger framework of signed graphs, we may consider graphs as signed graphs where all edges are negative. Consider a (2-connected) planar graph together with a cycle system consisting of facial cycles, each with clockwise orientation. Then each edge is in two faces and is oriented in the opposite directions on the two faces. Hence each edge in the dual signed graph of this system is negative. That is the classic dual of the planar graph endowed with a fully negative signature. 
	
	\begin{observation}\label{obs:switch_dual}
		Changing the direction of a cycle in the system $(\cal C,D)$ corresponds to switching the corresponding vertex in the dual.
	\end{observation}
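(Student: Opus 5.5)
The plan is to unwind the definition of the dual edge by edge. The vertex set of $S(\mathcal{C},D)$ is $\mathcal{C}$, and reversing the direction of one cycle $C_0$ gives a new system $(\mathcal{C},D')$. This leaves the underlying multigraph unchanged: the vertices are the same cycles, and the edges are still indexed by pairs of cycles sharing a graph edge $uv$, which does not depend on directions. So the only thing that can change is the signs. I will show that the new signature is obtained from the old one by switching the vertex $C_0$.

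First, take a dual edge labeled $uv$ joining $C_1$ and $C_2$, neither of which is $C_0$. The directions of $C_1$ and $C_2$ are the same under $D$ and $D'$. Hence the relative orientation of $uv$ on the two cycles is unchanged, and so is the sign of the edge. Switching $C_0$ likewise does not affect this edge, since it is not incident to $C_0$.

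Next, take a dual edge labeled $uv$ joining $C_0$ and some $C_1 \neq C_0$. Reversing $C_0$ reverses the orientation of $uv$ as it is traversed in $C_0$, while the orientation of $uv$ in $C_1$ is fixed. So "same direction" becomes "opposing direction" and vice versa, and the sign of the dual edge is multiplied by $-$. This is exactly the effect of switching $C_0$ on an incident edge.

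The one point needing care is the degenerate case of a dual edge from $C_0$ to itself. This cannot arise, since a cycle contains each edge at most once. If the multigraph convention nonetheless allowed such loops, both ends would be reversed, so the sign would be preserved, which again agrees with switching, since a loop's sign is multiplied by $(-)^2$. Combining the two cases gives the observation. Repeating the argument shows that any change of directions $D \to D'$ corresponds to switching the set of cycles whose direction changed.
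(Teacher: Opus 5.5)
Your proof is correct and is exactly the direct check from the definition of the signed dual that the paper leaves implicit; the observation is stated there without proof. Dual edges not incident to the reversed cycle keep their sign, while those incident to it have their sign flipped, which is precisely switching at that vertex. Your treatment of the loop case and of simultaneous reversals is a harmless extra.
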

	
	Therefore, when the directions are of little importance, we speak of the dual of a set of cycles denoted $\widehat S(\cal C)$.
		
	\subsection{Even cycles}
	
	\begin{theorem}\label{thm:setOfEvenCycles}
		Let $\mathcal{C}$ be a set of even cycles of a graph $G$ each of length at most $2k$ where each edge is in an even number of cycles in $\mathcal{C}$. If for some assignment $D$ of directions to the cycles in $\mathcal{C}$, the sum of the  $(\mathcal{C}, D)$-degrees of the edges is $2 \pmod 4$, then $G$ has circular chromatic number at least $\frac{2k}{k-1}$. 
	\end{theorem}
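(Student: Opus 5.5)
The plan is to argue by contradiction and compare two ways of counting green edges. Suppose $G$ admits a circular $r$-coloring $\phi$ with $r<\frac{2k}{k-1}$. Fix the directions $D$ given in the statement, and fix a point $I$ of $\mathcal{O}_r$ avoiding $\phi(V(G))$. For every cycle $C\in\mathcal{C}$, take the clockwise extension $\curve{\vec{C},\phi,\clock}$ along its $D$-direction, and color each edge of $C$ green or orange accordingly. The key point is that whether an occurrence of an edge $xy$ is green depends only on $\phi$ and on the direction in which $xy$ is traversed, not on which cycle contains it. Indeed, traversing $x\to y$ clockwise covers the arc from $\phi(x)$ to $\phi(y)$, and traversing $y\to x$ clockwise covers the complementary arc. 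Exactly one of these two arcs contains $I$. So each edge $e=xy$ has a distinguished direction in which it is green, and in the other direction it is orange.

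First I count cycle by cycle. Let $C\in\mathcal{C}$ have length $2m$ with $m\le k$. The function $\frac{2m}{m-1}$ is decreasing in $m$, so $r<\frac{2k}{k-1}\le\frac{2m}{m-1}$. For $m=1$ the bound is vacuous: a $2$-cycle is a pair of parallel edges, and any circular coloring gives exactly one green edge. Hence \Cref{prop:2k-cycle}, together with \Cref{obs:Clockwise+CounterCW}, shows that $C$ has exactly $m=|C|/2$ green edges. Summing over all cycles, the total number of green occurrences is $\sum_{C\in\mathcal{C}}|C|/2$.

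Second I count edge by edge. For an edge $e$, let $a_e$ and $b_e$ be the numbers of cycles of $\mathcal{C}$ traversing $e$ in its two directions. By hypothesis $a_e+b_e$ is even, so the $(\mathcal{C},D)$-degree $d_e=\pm(a_e-b_e)$ is even. The number $g_e$ of green occurrences of $e$ is either $a_e$ or $b_e$. In both cases $g_e-\frac{a_e+b_e}{2}=\pm\frac{d_e}{2}$, which is an integer. Since $\sum_C|C|/2=\sum_e\frac{a_e+b_e}{2}$, equating the two counts gives $\sum_e \epsilon_e\frac{d_e}{2}=0$ for some signs $\epsilon_e\in\{\pm1\}$.

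Finally, reduce modulo $2$. An integer and its negative agree mod $2$, so $\sum_e\frac{d_e}{2}\equiv 0\pmod 2$, that is, $\sum_e d_e\equiv 0 \pmod 4$. This contradicts the assumption that the sum is $2\pmod 4$, so no such $\phi$ exists and $\chi_c(G)\ge\frac{2k}{k-1}$. The sign convention chosen for $d_e$ is irrelevant for the same parity reason. The only delicate step is the one above: observing that greenness is a property of (edge, direction), which makes the two counts comparable, and handling the $\pm$ sign via parity. The rest is bookkeeping.
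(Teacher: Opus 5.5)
Your proof is correct and follows the paper's argument. Both rest on \Cref{prop:2k-cycle}, which forces exactly half the edges of each cycle to be green, on the fact that greenness depends only on the edge and its direction, and on a parity count modulo $4$. The paper phrases the edge-by-edge comparison through auxiliary multigraphs $G_1$ (the disjoint union of the directed cycles) and $G_2$ (obtained by adding $|d_e|$ copies of each edge in its minority direction so every edge becomes balanced). Your identity $g_e-\frac{a_e+b_e}{2}=\pm\frac{d_e}{2}$ does the same bookkeeping directly, and you also handle cycles shorter than $2k$ explicitly, which the paper leaves implicit.
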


	\begin{proof}
		We consider two oriented multigraphs $G_1$ and $G_2$. The first is obtained from $\mathcal{C}$ by taking their edge disjoint union but allowing the same vertices to be identified. The second, $G_2$ is built from $G_1$ by adding a (minimum) number of edges so that each edge $xy$ is oriented as many times as $x\to y$ as it is oriented as $y\to x$. 
		Since the degree of each edge is even, the number of edges added for each $xy$ is even. But the assumption on the total degree implies that the total number of edges added to $G_1$ in order to get $G_2$ is of the form $2(2l+1)$ for some $l$. 
		
\begin{figure}[h]
	\centering
	\scalebox{1}{\begin{tikzpicture}
	\begin{pgfonlayer}{nodelayer}
		\node [style=small black empty] (0) at (-13.75, 4) {};
		\node [style=small black empty] (1) at (-9.75, 4) {};
		\node [style=small black empty] (2) at (-13.75, 0) {};
		\node [style=small black empty] (3) at (-9.75, 0) {};
		\node [style=none] (4) at (-11.75, 3.75) {};
		\node [style=none] (5) at (-11.75, 0.25) {};
		\node [style=none] (6) at (-15, 3.75) {};
		\node [style=none] (7) at (-15, 0.25) {};
		\node [style=none] (8) at (-8.5, 3.75) {};
		\node [style=none] (9) at (-8.5, 0.25) {};
		\node [style=none] (10) at (-13.75, 5) {};
		\node [style=none] (11) at (-9.75, 5) {};
		\node [style=none] (12) at (-13.75, -1) {};
		\node [style=none] (13) at (-9.75, -1) {};
		\node [style=small black empty] (14) at (2.75, 4) {};
		\node [style=small black empty] (15) at (6.75, 4) {};
		\node [style=small black empty] (16) at (2.75, 0) {};
		\node [style=small black empty] (17) at (6.75, 0) {};
		\node [style=none] (18) at (-11.75, -2) {$G$ and $(\mathcal C,D)$};
		\node [style=none] (19) at (4.75, -2) {$G_1$};
		\node [style=small black empty] (20) at (11, 4) {};
		\node [style=small black empty] (21) at (15, 4) {};
		\node [style=small black empty] (22) at (11, 0) {};
		\node [style=small black empty] (23) at (15, 0) {};
		\node [style=none] (24) at (13, -2) {$G_2$};
		\node [style=none] (25) at (-2.5, 4) {};
		\node [style=none] (26) at (-2.5, 0) {};
		\node [style=none] (31) at (-14.5, 4) {$u_1$};
		\node [style=none] (32) at (-9, 4) {$u_2$};
		\node [style=none] (33) at (-14.5, -0.25) {$u_3$};
		\node [style=none] (34) at (-9, -0.25) {$u_4$};
		\node [style=none] (35) at (-4, 4) {$u_1$};
		\node [style=none] (36) at (-1, 4) {$u_3$};
		\node [style=none] (37) at (-4, 0) {$u_4$};
		\node [style=none] (38) at (-1, 0) {$u_2$};
		\node [style=none] (39) at (-2.5, 4.5) {$I$};
		\node [style=none] (40) at (-2.5, 3.75) {};
		\node [style=none] (41) at (-2.5, 4.25) {};
		\node [style=none] (42) at (-2.5, -2) {$\mathcal O$};
		\node [style=none] (43) at (-8.25, 2) {};
		\node [style=none] (44) at (-5.25, 2) {};
		\node [style=none] (46) at (-6.75, 2.5) {$r$-coloring};
		\node [style=none] (47) at (-3.75, 3.75) {};
		\node [style=none] (48) at (-3.5, 3.5) {};
		\node [style=none] (49) at (-1.25, 3.75) {};
		\node [style=none] (50) at (-1.5, 3.5) {};
		\node [style=none] (51) at (-3.75, 0.25) {};
		\node [style=none] (52) at (-3.5, 0.5) {};
		\node [style=none] (53) at (-1.5, 0.5) {};
		\node [style=none] (54) at (-1.25, 0.25) {};
		\node [style=none] (55) at (1.75, 4.75) {};
		\node [style=none] (56) at (7.5, 5) {};
		\node [style=none] (57) at (1.75, -0.75) {};
		\node [style=none] (58) at (7.75, -0.75) {};
		\node [style=none] (59) at (2, 5) {};
		\node [style=none] (60) at (7.75, 4.75) {};
		\node [style=none] (61) at (2, -1) {};
		\node [style=none] (62) at (7.5, -1) {};
		\node [style=none] (63) at (-9.25, 4.5) {};
		\node [style=none] (64) at (-14.25, 4.5) {};
		\node [style=none] (65) at (-14.25, -0.5) {};
		\node [style=none] (66) at (-9.25, -0.5) {};
		\node [style=none] (67) at (10.25, 5) {};
		\node [style=none] (68) at (10, 4.75) {};
		\node [style=none] (69) at (10, -0.75) {};
		\node [style=none] (70) at (10.25, -1) {};
		\node [style=none] (71) at (15.75, -1) {};
		\node [style=none] (72) at (16, -0.75) {};
		\node [style=none] (73) at (15.75, 5) {};
		\node [style=none] (74) at (16, 4.75) {};
		\node [style=none] (75) at (15.5, 5.25) {};
		\node [style=none] (76) at (16.25, 4.5) {};
		\node [style=none] (77) at (16.25, -0.5) {};
		\node [style=none] (78) at (15.5, -1.25) {};
	\end{pgfonlayer}
	\begin{pgfonlayer}{edgelayer}
		\draw (0) to (1);
		\draw (1) to (3);
		\draw (3) to (2);
		\draw (2) to (0);
		\draw [style={dashed_arc}, bend left=90, looseness=1.75] (4.center) to (5.center);
		\draw [style={dashed_arc}, bend left=90, looseness=1.75] (5.center) to (4.center);
		\draw [style={dashed_arc}, bend left=90] (6.center) to (7.center);
		\draw [style={dashed_arc}, bend right=90] (8.center) to (9.center);
		\draw [style={dashed_arc}, bend right=90, looseness=0.75] (10.center) to (11.center);
		\draw [style={dashed_arc}, bend left=90, looseness=0.75] (12.center) to (13.center);
		\draw [style=Arc, draw={black!30!green}, bend right=15] (14) to (15);
		\draw [style=Arc, draw={black!30!green}, in=165, out=15] (14) to (15);
		\draw [style=Arc, draw=orange, bend right=15] (16) to (14);
		\draw [style=Arc, draw={black!30!green}, bend right=15] (14) to (16);
		\draw [style=Arc, draw={black!30!green}, bend right=15] (15) to (17);
		\draw [style=Arc, draw={black!30!green}, bend left=15] (15) to (17);
		\draw [style=Arc, draw=orange, bend left=345] (17) to (16);
		\draw [style=Arc, draw={black!30!green}, bend right=15] (16) to (17);
		\draw [style=Arc, draw={black!30!green}, bend right=15] (20) to (21);
		\draw [style=Arc, draw={black!30!green}, bend left=15] (20) to (21);
		\draw [style=Arc, draw=orange, bend right=15] (22) to (20);
		\draw [style=Arc, draw={black!30!green}, bend right=15] (20) to (22);
		\draw [style=Arc, draw={black!30!green}, bend right=15] (21) to (23);
		\draw [style=Arc, draw={black!30!green}, bend left=15] (21) to (23);
		\draw [style=Arc, draw=orange, bend left=345] (23) to (22);
		\draw [style=Arc, draw={black!30!green}, bend right=15] (22) to (23);
		\draw [style=Arc, draw=orange, bend left=330, looseness=1.25] (21) to (20);
		\draw [style=Arc, draw=orange, bend left=300, looseness=1.25] (21) to (20);
		\draw [style=Arc, draw=orange, bend right=45] (23) to (21);
		\draw [style=Arc, draw=orange, bend right=75, looseness=1.25] (23) to (21);
		\draw [bend left=90, looseness=1.75] (25.center) to (26.center);
		\draw [bend right=90, looseness=1.75] (25.center) to (26.center);
		\draw (41.center) to (40.center);
		\draw [style=Arc] (43.center) to (44.center);
		\draw (47.center) to (48.center);
		\draw (50.center) to (49.center);
		\draw (51.center) to (52.center);
		\draw (53.center) to (54.center);
		\draw [style=Arc, draw=orange] (62.center) to (17);
		\draw [style=Arc, draw=orange] (58.center) to (17);
		\draw [style=Arc, draw=orange] (15) to (56.center);
		\draw [style=Arc, draw=orange] (15) to (60.center);
		\draw [style=Arc, draw=orange] (59.center) to (14);
		\draw [style=Arc, draw=orange] (57.center) to (16);
		\draw [style=Arc, draw={black!30!green}] (16) to (61.center);
		\draw [style=Arc, draw={black!30!green}] (14) to (55.center);
		\draw (1) to (63.center);
		\draw (0) to (64.center);
		\draw (2) to (65.center);
		\draw (3) to (66.center);
		\draw [style=Arc, draw=orange] (71.center) to (23);
		\draw [style=Arc, draw=orange] (72.center) to (23);
		\draw [style=Arc, draw=orange] (69.center) to (22);
		\draw [style=Arc, draw=orange] (67.center) to (20);
		\draw [style=Arc, draw=orange] (21) to (73.center);
		\draw [style=Arc, draw=orange] (21) to (74.center);
		\draw [style=Arc, draw={black!30!green}] (20) to (68.center);
		\draw [style=Arc, draw={black!30!green}] (22) to (70.center);
		\draw [style=Arc, draw={black!30!green}] (23) to (78.center);
		\draw [style=Arc, draw={black!30!green}] (23) to (77.center);
		\draw [style=Arc, draw={black!30!green}] (75.center) to (21);
		\draw [style=Arc, draw={black!30!green}] (76.center) to (21);
	\end{pgfonlayer}
\end{tikzpicture}}
	\caption{Illustration of the proof, focussing on a 4-cycle}
	\label{fig:proofSetOfEvenCycles}
\end{figure}
		
		We now assume, for the sake of contradiction, that $G$ admits a circular $r$-coloring for some $r< \frac{2k}{k-1}$. This induces a circular $r$-coloring on the underlying graph of $G_1$ (and $G_2$). Consider the associated green-orange coloring based on the clockwise extension. See \Cref{fig:proofSetOfEvenCycles} for a pictorial description. The following three observations then lead us to a contradiction: 
	\begin{itemize}[itemsep=0pt, topsep=0pt]
\item[1.] By \Cref{prop:2k-cycle}, each cycle in $\mathcal{C}$ has half of its edges colored green, implying that the total number of green edges of $G_1$ is half the number of its edges. 
\item[2.] Adding two $x-y$ edges both directed the same direction does not change the parity of the number of green edges, implying that the parity of the number of green edges in $G_2$ remains as that of $G_1$. 
\item[3.] As we can pair up edges in $G_2$ into disjoint union of $\{x\to y, y\to x\}$, the total number of green edges of $G_2$ is also half the number of edges in $G_2$. But by our assumption, the difference of the half of the number of edges of $G_1$ and $G_2$ is an odd number.
	\end{itemize}
	\end{proof}
		
	In the proof above, a contradiction was reached based on the parity of the number of green edges of $G_1$ and $G_2$. The only place where the fact that $\phi$ is a circular coloring was used is in the first of the three statements, where we apply \Cref{prop:2k-cycle}. This proposition, assuming $\phi$ is a circular $r$-coloring with $r< \frac{2k}{k-1}$, gives a precise value of $k$ for the winding number of each cycle in $\mathcal{C}$. However, all that is needed for the proof is that the winding number of each of them has the same parity as $k$. This can be restated as follows.
	
	\begin{theorem}\label{thm:setOfEvenCyclesStronger}
		Let $\mathcal{C}$ be a set of even cycles of a graph $G$ each of length at most $2k$ where the degree of each edge in $\cal C$ is even. Furthermore, suppose for an assignment $D$ of directions to the cycles in $\mathcal{C}$, the sum of the  $(\mathcal{C}, D)$-degrees of the edges is $2 \pmod 4$, and that the subgraph induced by $\mathcal{C}$ is not bipartite. Then for any mapping $\phi$ of the vertices of $G$ to $\cal O$ that never identically map adjacent vertices, at least for one cycle in $\mathcal{C}$, we have: $w\curve{\vec{C}, \phi, \clock} \cong k+1 \pmod 2$.
	\end{theorem}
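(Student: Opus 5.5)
We argue by contradiction, following the proof of \Cref{thm:setOfEvenCycles} but replacing the exact count from \Cref{prop:2k-cycle} by a parity assumption. Suppose $\phi$ maps $V(G)$ to $\mathcal{O}$ with $\phi(x)\neq\phi(y)$ for every edge $xy$. Then the clockwise extension of every directed edge is well defined. Suppose also that every $C\in\mathcal{C}$, directed by $D$, satisfies $w\curve{\vec{C},\phi,\clock}\equiv k \pmod 2$. Let $m=|\mathcal{C}|$.

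We build the same two oriented multigraphs as before. $G_1$ is the edge-disjoint union of the directed cycles of $(\mathcal{C},D)$. $G_2$ is obtained from $G_1$ by adding, for each pair $xy$, the minimum number of arcs that balance $x\to y$ against $y\to x$. Colour every arc green or orange according to the clockwise extension of $\phi$ with respect to a fixed point $I$. The number of green arcs is then counted in three ways.

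\begin{enumerate}[label=(\roman*), itemsep=0pt, topsep=2pt]
\item By \Cref{obs:Clockwise+CounterCW}, each cycle contributes exactly $w\curve{\vec{C},\phi,\clock}$ green arcs. Summing over $\mathcal{C}$, the number of green arcs of $G_1$ is $\equiv mk \pmod 2$.
\item Since every $(\mathcal{C},D)$-degree is even, the arcs are added in parallel pairs with the same direction. Each such pair is entirely green or entirely orange, so $G_1$ and $G_2$ have the same number of green arcs modulo $2$.
\item $G_2$ splits into $2$-cycles $\{x\to y,\,y\to x\}$. By \Cref{obs:2-cycle} (equivalently \Cref{prop:OpposingDirections} for a $2$-cycle), each of these contains exactly one green arc. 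Hence $G_2$ has exactly $|E(G_2)|/2=\sum_{C\in\mathcal{C}}|C|/2+(2l+1)$ green arcs, where $2(2l+1)$ is the number of added arcs fixed by the hypothesis that the total degree is $2 \pmod 4$.
\end{enumerate}

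Combining (i)--(iii) gives $mk\equiv \sum_{C}|C|/2+1 \pmod 2$. A contradiction therefore follows as soon as $\sum_C |C|/2\equiv mk \pmod 2$. This holds when every cycle of $\mathcal{C}$ has length exactly $2k$. More generally it holds whenever the number of cycles whose length is not $\equiv 2k \pmod 4$ is even. In that case some cycle must have $w\curve{\vec{C},\phi,\clock}\equiv k+1 \pmod 2$, as claimed.

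The main obstacle is exactly this last congruence, which is the step where the length hypothesis has to be used. I would first try to use the bound ``length at most $2k$'' together with non-bipartiteness to normalise the cycles to length $2k$, for instance by padding shorter cycles with back-and-forth $2$-cycles along an edge. By (ii)--(iii), such padding changes neither the parity bookkeeping nor the degrees. However, a literal padding changes a cycle's length by an even amount without changing its winding number. So I expect that, as worded, the statement really needs the cycles to have length $\equiv 2k \pmod 4$; this is the case of the intended application, where all cycles are $2k$-faces. If padding fails to handle shorter cycles in general, I would state the result under that length assumption and note that the computation above is otherwise complete.

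The non-bipartiteness hypothesis plays no role in the counting itself. Its role is only to rule out the degenerate situation where the conclusion would be vacuous; there the configuration forces the parity hypothesis on the degrees to fail anyway.
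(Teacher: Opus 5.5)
Your steps (i)--(iii) are the paper's argument. The paper obtains this theorem by remarking that the proof of \Cref{thm:setOfEvenCycles} uses only the parity of each winding number, which is what you do.

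Your reservation about the length hypothesis is correct, and the paper's remark overlooks it. What the count proves is that some cycle has $w\curve{\vec C,\phi,\clock}\not\equiv |C|/2 \pmod 2$. In \Cref{thm:setOfEvenCycles} a $2j$-cycle with $j<k$ gets winding number $j$, not $k$. So the count matches the stated conclusion only under your parity condition on the lengths.

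As worded, the statement is false. Take $k=3$. Let $G$ be $K_4$ on $\{1,2,3,4\}$ together with a vertex $v$ adjacent to $1,4$ and a vertex $u$ adjacent to $3,4$. Let $\mathcal{C}$ consist of the directed $4$-cycles $1234$, $124v$, $1v43$, $324u$, $13u4$; these are the faces of the hemicube $K_4$ in the projective plane after splitting two of its faces.
\begin{itemize}
\item Each edge lies in exactly two of these cycles.
\item The $(\mathcal{C},D)$-degrees are $2$ on $12,14,24$ and $0$ elsewhere, so the total is $6\equiv 2 \pmod 4$.
\item $G$ contains triangles, so it is not bipartite.
\end{itemize}
Place $1,2,3,4,u,v$ at $0,2,4,6,5,7$ on $\mathcal{O}_8$. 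The clockwise winding numbers of the five cycles are then $1,1,3,3,1$. All are odd, whereas the statement requires one congruent to $k+1=4$. So the right formulation either restricts to lengths $\equiv 2k \pmod 4$ or replaces the conclusion by $w\not\equiv |C|/2$. In that form your count proves it completely.

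Two small points.
\begin{itemize}
\item Your description of padding is wrong. Inserting a back-and-forth $x\to y\to x$ adds a directed $2$-cycle, i.e.\ one full clockwise turn. It raises the length by $2$ and the winding number by $1$, so it preserves $w-|C|/2$. That is exactly why padding cannot turn the true conclusion into the stated one.
\item Your remark on non-bipartiteness is right, and it has a one-line proof. If the union of the cycles were bipartite, sending the two sides to antipodal points gives $w=|C|/2$ for every cycle. Your count then forces the total degree to be $0 \pmod 4$, so that hypothesis follows from the others.
\end{itemize}
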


	The condition on $\phi$, that it does not identify adjacent vertices, can be dropped by assigning an order to vertices that are mapped to the same point: If $x$ is ordered before $y$, then extending $xy$ in clockwise direction means all point of $xy$ is mapped to same point and thus $xy$ is an orange edge. If $x$ is ordered after $y$, then the edge $xy$ maps fully around the circle and hence is a green edge.
	
	In proving that nonbipartite quadrangulations of the projective plane are 4-chromatic, Youngs \cite{Y96} proved a stronger result that in any 4-coloring of such a graph there is a face which receives all four colors. This claim is strengthened in \cite{HRS02} to any $k$-coloring rather than 4-coloring. Then in \cite{M02} it is shown that in any $k$-coloring of a nonbipartite quadrangulation of the projective plane there are at least three faces each receiving four different colors. We show here that the previous theorem implies all these claims. The first and second are immediate: Viewing the $k$ colors as the points of the circle $\mathcal{O}$ with cyclic distance of at least 1 between consecutive points, the 4-cycle with the winding number 1 or 3 must have received 4-different colors. We note that for the claim of having at least one face with winding number 1 or 3, one such face is the best we can expect. A projective planar embedding of $M(C_5)$ where all faces are 4-cycles, together with a 4-coloring and orientations of the faces is given in \Cref{fig:M2C5}. In this setting there is only one face with odd winding number. However, the stronger claim of having three 4-cycles that are colored with four colors follows from reordering the colors on $\mathcal{O}$. For example, if we change the cyclic ordering of the colors from $1234$ to $2134$, then the 4-cycle which had the winding number 1 in the first ordering, now is of winding number 2. Thus another 4-cycle must have an odd winding number. The third cycle is obtained by considering the cyclic order $1243$ of colors. Up to cyclic ordering of the colors on the circle, and after identifying clockwise and counterclockwise orderings of the colors, three 4-colored 4-faces is the best we could expect when we are working with quadrangulations of the projective plane. One can obtain similar conclusions when larger faces are considered.

	\begin{figure}[ht]
		\centering
		\scalebox{1}{	\begin{minipage}[t]{1\textwidth}
		\centering
		\scalebox{1.3}{
			\begin{tikzpicture}
				
				\foreach \i in {1,4,5}
				{ \draw node[line width=0.05mm, circle, fill=olive, minimum width=.5mm, inner sep=0mm, draw=black!80, minimum size=2mm]  (\i) at ({\i*360/5}:2.5){};}

					\foreach \i in {2}
				{ \draw node[line width=0.05mm, circle, fill=cyan, minimum width=.5mm, inner sep=0mm, draw=black!80, minimum size=2mm]  (\i) at ({\i*360/5}:2.5){};}
				
					\foreach \i in {3}
				{ \draw node[line width=0.05mm, circle, fill=pink, minimum width=.5mm, inner sep=0mm, draw=black!80, minimum size=2mm]  (\i) at ({\i*360/5}:2.5){};}

				\foreach \i in {6,10}
				{ \draw node[line width=0.05mm, circle, fill=pink, minimum width=.5mm, inner sep=0mm, draw=black!80, minimum size=2mm]  (\i) at ({\i*360/5+180/5}:5){};
				}
				
					\foreach \i in {8,9}
				{ \draw node[line width=0.05mm, circle, fill=cyan, minimum width=.5mm, inner sep=0mm, draw=black!80, minimum size=2mm]  (\i) at ({\i*360/5+180/5}:5){};
				}
				
					\foreach \i in {7}
				{ \draw node[line width=0.05mm, circle, fill=olive, minimum width=.5mm, inner sep=0mm, draw=black!80, minimum size=2mm]  (\i) at ({\i*360/5+180/5}:5){};
				}
				
			   	\foreach \i in {11, 12, ..., 20}
			   { \draw node[line width=0.05mm, circle, fill=white, minimum width=.5mm, inner sep=0mm, draw=black!80, minimum size=0mm]  (\i) at ({\i*360/10+180/10}:7){};
			   }
				\draw ({11*360/10+180/10}:7.5) node {e};
				\draw ({12*360/10+180/10}:7.5) node {d};
				\draw ({13*360/10+180/10}:7.5) node {c};
				\draw ({14*360/10+180/10}:7.5) node {b};
				\draw ({15*360/10+180/10}:7.5) node {a};
				\draw ({16*360/10+180/10}:7.5) node {e};
				\draw ({17*360/10+180/10}:7.5) node {d};
				\draw ({18*360/10+180/10}:7.5) node {c};
				\draw ({19*360/10+180/10}:7.5) node {b};
				\draw ({20*360/10+180/10}:7.5) node {a};
				
				\draw [ thick, red, line width=0.15mm, opacity=0.9] (1) -- (6);
				\draw [ thick, red, line width=0.15mm, opacity=0.9] (2) -- (7);
				\draw [ thick, red, line width=0.15mm, opacity=0.9] (3) -- (8);
				\draw [ thick, red, line width=0.15mm, opacity=0.9] (4) -- (9);
				\draw [ thick, red, line width=0.15mm, opacity=0.9] (5) -- (10);
				
				\draw [ thick, red, line width=0.15mm, opacity=0.9] (1) -- (10);
				\draw [ thick, red, line width=0.15mm, opacity=0.9] (2) -- (6);
				\draw [ thick, red, line width=0.15mm, opacity=0.9] (3) -- (7);
				\draw [ thick, red, line width=0.15mm, opacity=0.9] (4) -- (8);
				\draw [ thick, red, line width=0.15mm, opacity=0.9] (5) -- (9);

				\draw  [ dashed, gray, line width=0.2mm, opacity=0.5] (0,0) circle (7cm);
				
				\draw node[line width=0.05mm, circle, fill=purple, minimum width=.5mm, inner sep=0mm, draw=black!80, minimum size=2mm]  (90) at ({90}:0){};
				
				\draw [ thick, red, line width=0.15mm, opacity=0.9] (1) -- (90);
				\draw [ thick, red, line width=0.15mm, opacity=0.9] (2) -- (90);
				\draw [ thick, red, line width=0.15mm, opacity=0.9] (3) -- (90);
				\draw [ thick, red, line width=0.15mm, opacity=0.9] (4) -- (90);
				\draw [ thick, red, line width=0.15mm, opacity=0.9] (5) -- (90);

				\draw [ thick, red, line width=0.15mm, opacity=0.9] (6) -- (12);
				\draw [ thick, red, line width=0.15mm, opacity=0.9] (6) -- (13);
				
				\draw [ thick, red, line width=0.15mm, opacity=0.9] (7) -- (14);
				\draw [ thick, red, line width=0.15mm, opacity=0.9] (7) -- (15);
				
				\draw [ thick, red, line width=0.15mm, opacity=0.9] (8) -- (16);
				\draw [ thick, red, line width=0.15mm, opacity=0.9] (8) -- (17);
				
				\draw [ thick, red, line width=0.15mm, opacity=0.9] (9) -- (18);
				\draw [ thick, red, line width=0.15mm, opacity=0.9] (9) -- (19);
				
				\draw [ thick, red, line width=0.15mm, opacity=0.9] (10) -- (20);
				\draw [ thick, red, line width=0.15mm, opacity=0.9] (10) -- (11);

					\foreach \i/\j in {90/1,90/2,90/3,90/4,90/5,1/90,2/90,3/90,4/90,5/90}
				{ 	\draw[-{Straight Barb[right]}, gray]      (\i) to[parallel segment]      (\j);}

				\foreach \i/\j in {1/6,6/2, 2/7,7/3,3/8,8/4,4/9,9/5,5/10,10/1}
				{ 	\draw[-{Straight Barb[right]}, gray]      (\i) to[parallel segment]      (\j);}

				\foreach \i/\j in {6/1,2/6, 7/2,3/7,8/3,4/8,9/4,5/9,10/5,1/10}
				{ 	\draw[-{Straight Barb[right]}, gray]      (\i) to[parallel segment]      (\j);}

				\foreach \i/\j in {13/6,7/14, 15/7,8/16,17/8,9/18,19/9,10/20,11/10,6/12}
				{ 	\draw[-{Straight Barb[right]}, gray]      (\i) to[parallel segment]      (\j);}
				
					\foreach \i/\j in {13/6,7/14, 15/7,8/16,17/8,9/18,19/9,10/20,11/10,6/12}
				{ 	\draw[-{Straight Barb[left]}, gray]      (\i) to[-parallel segment]      (\j);}

 \foreach \i in {1}
 { \draw[line width=0.6pt,
 	-{Stealth[length=1.1mm, width=1mm]}]  ({\i*360/5+170/5}:2) arc[start angle=-30, end angle=-335, radius=2ex]
 		-- ++(305:0.45ex);
 }
 
  \foreach \i in {2}
 { \draw[line width=0.6pt,
 	-{Stealth[length=1.1mm, width=1mm]}]  ({\i*360/5+190/5}:2) arc[start angle=-30, end angle=-335, radius=2ex]
 	-- ++(305:0.45ex);
 }
 
  \foreach \i in {3}
 { \draw[line width=0.6pt,
 	-{Stealth[length=1.1mm, width=1mm]}]  ({\i*360/5+210/5}:2) arc[start angle=-30, end angle=-335, radius=2ex]
 	-- ++(305:0.45ex);
 }
				
			\end{tikzpicture}
		}

	\end{minipage}}        
		\caption{For each positioning of the four colors on $\mathcal{O}$ one of the three indicated faces will have winding number $+1$ or $-1$.}
		\label{fig:M2C5}
	\end{figure}
	
   The proof above could also be presented in terms of the difference between the number of green and orange edges: If $\epsilon$ is small enough, then in a $(2+\epsilon)$-coloring of a $2k$-cycle, with respect to the clockwise extension, the number of green and orange edges are the same. Clockwise extension of a (directed) 2-cycle always has one green and one orange edge. But if a pair of $xy$-edges (transitive 2-cycle) are extended to the same arc of $\mathcal{O}$, then the pair contributes $2$ to one of the green or orange family and 0 to the other. Thus, on the one hand the difference of the number of green and orange edges is $0 \mod 4$ and on the other hand it is, in  $\pmod 4$, the same as the number of edges remaining after removing every possible directed 2-cycle. Thus if the latter is a $2 \mod 4$, we have contradiction which only means $G$ admits no such coloring.

	\subsection{Odd cycles}
	
 If a planar triangulation contains a vertex of odd degree, then it contains an odd wheel. Hence its chromatic number and also its circular chromatic number, is at least 4 and by the four-color theorem is precisely 4. Thus for a planar triangulation to admit a 3-coloring or even a circular $(4-\epsilon)$-coloring ($\epsilon > 0$) it must be Eulerian. It is a classic result with a number of interesting proofs in the literature that this necessary condition is also sufficient. This result is extended to triangulations of other surfaces in \cite{DGMVZ05} with an implicit use of the notion of signed dual. Here applying \Cref{prop:2k+1-cycle-II} together with the notion of signed dual, we provide a forbidden interval for the value of the circular chromatic number if certain cycles system is found inside a graph.
 
 \begin{theorem}\label{thm:balanced-dual}
 	Let $G$ be a graph and $\mathcal{C}$ be a set of $(2k+1)$-cycles of $G$. If $\widehat S(\cal C)$ is not balanced, then $\chi_c(G)\geq \frac{4k}{2k-1}$. 
 \end{theorem}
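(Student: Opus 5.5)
We argue by contraposition: assuming $G$ admits a circular $r$-coloring $\phi$ with $r<\frac{4k}{2k-1}$, we show that $\widehat S(\mathcal C)$ is balanced. The plan is to exhibit a specific choice of directions $D$ for which every edge of the dual $S(\mathcal C,D)$ is positive. By \Cref{obs:switch_dual}, changing directions only switches vertices of the dual, so $\widehat S(\mathcal C)$ is then switching equivalent to an all-positive signed graph, hence balanced. This contradicts the hypothesis, and gives $\chi_c(G)\geq \frac{4k}{2k-1}$. (Since the set of admissible $r$ is closed upward and $\chi_c$ is attained as a minimum, it suffices to rule out every $r<\frac{4k}{2k-1}$.)

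First we choose $D$. Restricting $\phi$ to each cycle $C\in\mathcal C$ gives a circular $r$-coloring of a $(2k+1)$-cycle, and $\frac{4k}{2k-1}<\frac{2k+1}{k-1}$. \Cref{prop:2k+1-cycle-II} then says that, among the two directions of $C$, the clockwise extension has winding number $k$ for exactly one and $k+1$ for the other, using \Cref{prop:OpposingDirections} together with $k+(k+1)=2k+1$. We let $D$ assign to $C$ the direction giving $w\curve{\vec C,\phi,\clock}=k$. This choice is canonical given $\phi$ and involves no global consistency issue, since it is made independently for each cycle.

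Next we check the signs. Any edge of $S(\mathcal C,D)$ comes from two cycles $C_1,C_2\in\mathcal C$ sharing an edge $uv$. Both are $(2k+1)$-cycles colored by $\phi$ with $r<\frac{4k}{2k-1}$, and both are directed so that their clockwise winding number is $k$. \Cref{cor:OddCyclesCommonEdge} then says $uv$ is oriented the same way on $C_1$ and $C_2$, so the dual edge is positive.

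The underlying mechanism is the furthermore part of \Cref{prop:2k+1-cycle-II}. In the chosen direction each edge travels clockwise less than $r/2$, so $uv$ is directed from $u$ to $v$ exactly when the clockwise arc from $\phi(u)$ to $\phi(v)$ is the shorter one. This is a property of $\phi$ alone, independent of the cycle. The only subtle point is whether the corollary applies when $C_1\cup C_2$ is not exactly the subgraph it describes, or when two cycles share several edges and so give parallel dual edges. The local argument above handles each shared edge separately, so this is no real obstacle: all dual edges are positive, and $\widehat S(\mathcal C)$ is balanced.
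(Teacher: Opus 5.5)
Your proof is correct and is essentially the paper's argument: you orient each cycle of $\mathcal C$ so that its clockwise extension has winding number $k$ (\Cref{prop:2k+1-cycle-II}), use \Cref{cor:OddCyclesCommonEdge} to see that every edge of $S(\mathcal C,D)$ is positive, and conclude that $\widehat S(\mathcal C)$ is balanced by switching invariance (\Cref{obs:switch_dual}). Your extra remark is exactly the reason the corollary holds: in the chosen direction each edge $uv$ follows the shorter clockwise arc from $\phi(u)$ to $\phi(v)$, which depends on $\phi$ alone. It also correctly settles the concerns about edges shared by several cycles and parallel dual edges.
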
    
	
\begin{proof}
	Suppose $\chi_c(G) < \frac{4k}{2k-1}$. By \Cref{prop:2k+1-cycle-II}, for each cycle $C$ in $\cal C$, one of the directions say $D_C$ gives winding number $k$. By \Cref{cor:OddCyclesCommonEdge}, under the directions $D = \set{D_C, C \in \cal C}$, the dual signed graph $S(\cal C,D)$ has no negative edge. Finally, for any other choice $D'$ of the directions on $\cal C$ the dual signed graph $S(\cal C,D')$ is switching equivalent to $S(\cal C,D)$ (by \Cref{obs:switch_dual}) and, therefore, is balanced.
\end{proof}

This generalizes the fact that a planar triangulations which is not Eulerian, has circular chromatic number (at least) 4. As noted, it is a well known fact that the condition of being Eulerian is also sufficient in this case for the graph to map to $K_3$. In the next section we use the above theorem to extend such results also to Eulerian graphs on surfaces such as plane and projective plane where each face is a $(2k+1)$-cycle.  

	\subsection{Graphs on surfaces}
 Here we see natural applications of the previous results on graph embedded on certain surfaces, most notably on the projective plane. For standard facts and terminologies we refer to \cite{Mohar-Thomassen-Book}. Let us, however, remind a reader that if all faces of a graph embedded on the projective plane are even, then all noncontractible cycles are same parity, but that both are possible. Examples of two cases are given in \Cref{fig:Odd-EvenNonContractible}. To apply the results of the previous section to graphs on surface, one may first select a set of noncontractible cycles in such a way that cutting the surface along them we get a surface homeomorphic to the plane, orienting them in the direction they are cut. We then take all facial cycles of the resulting plane graph each oriented in the clockwise direction. In what follows we first show how our results can be applied to embedded graphs where all faces are even, thus providing somewhat simpler proof for result in the literature (see \cite{DGMVZ05} and references therein). We then obtain similar gaps for embedded graphs where all faces are odd-cycles of a given length. This extend the classic result about 3-colorability of triangulations of the plane.    
	
	\begin{figure}
		\centering
		\centering
\begin{tikzpicture}[scale=.5]
\coordinate (u1) at (-0.1200,3.9900);
\coordinate (u2) at (-0.1400,0.6500);
\coordinate (u21) at (-3.1000,-0.9700);
\coordinate (u22) at (-3.1200,-2.5606);
\coordinate (u23) at (-3.1200,2.3700);
\coordinate (u24) at (-3.1200,5.7700);
\coordinate (u25) at (-3.1200,7.3006);
\coordinate (u37) at (-5.4645,9.9549);
\coordinate (u39) at (-6.0800,4.0500);
\coordinate (u41) at (-6.1800,0.8100);
\coordinate (u48) at (-7.4147,4.7921);
\coordinate (u49) at (-7.4600,0.0300);
\coordinate (u65) at (1.1796,4.7834);
\coordinate (u66) at (1.2470,0.0807);

\tikzset{graph edge/.style={line width=1.5pt,shorten >=0.8pt,shorten <=0.8pt}}

\draw [dashed, gray, line width=0.2mm, opacity=0.5] (u23) circle (4.9306cm);
\draw ($(u23)+({0*360/6+180/6}:5.3cm)$) node {a};
\draw ($(u23)+({1*360/6+180/6}:5.3cm)$) node {c};
\draw ($(u23)+({2*360/6+180/6}:5.3cm)$) node {b};
\draw ($(u23)+({3*360/6+180/6}:5.3cm)$) node {a};
\draw ($(u23)+({4*360/6+180/6}:5.3cm)$) node {c};
\draw ($(u23)+({5*360/6+180/6}:5.3cm)$) node {b};
\draw [line width=1pt] (u39)-- (u23);
\draw [line width=1pt] (u23)-- (u21);
\draw [line width=1pt] (u21)-- (u41);
\draw [line width=1pt] (u41)-- (u39);
\draw [line width=1pt] (u1)-- (u23);
\draw [line width=1pt] (u24)-- (u1);
\draw [line width=1pt] (u39)-- (u24);
\draw [line width=1pt] (u2)-- (u1);
\draw [line width=1pt] (u21)-- (u2);
\draw [line width=1pt] (u41)-- (u49);
\draw [line width=1pt] (u48)-- (u39);
\draw [line width=1pt] (u24)-- (u25);
\draw [line width=1pt] (u1)-- (u65);
\draw [line width=1pt] (u2)-- (u66);
\draw [line width=1pt] (u21)-- (u22);

\begin{scriptsize}
\draw [fill=white] (u39) circle (5.1pt);
\draw [fill=white] (u23) circle (5.1pt);
\draw [fill=white] (u41) circle (5.1pt);
\draw [fill=white] (u21) circle (5.1pt);
\draw [fill=white] (u1) circle (5.1pt);
\draw [fill=white] (u24) circle (5.1pt);
\draw [fill=white] (u2) circle (5.1pt);

\end{scriptsize}
\end{tikzpicture}
		\qquad
		\begin{tikzpicture}[line cap=round,line join=round,>=triangle 45,x=0.9068cm,y=0.9068cm,scale=.5,rotate=180]
\coordinate (u68) at (10.6613,5.6931);
\coordinate (u70) at (10.6813,2.1928);
\coordinate (u72) at (10.7049,-1.9526);
\coordinate (u75) at (11.4301,-3.1923);
\coordinate (u89) at (13.7226,0.4600);
\coordinate (u93) at (14.2595,4.2860);
\coordinate (u96) at (14.9705,5.5339);
\coordinate (u101) at (15.6957,4.2942);
\coordinate (u110) at (5.6432,4.2369);
\coordinate (u112) at (6.3542,5.4848);
\coordinate (u116) at (7.0794,4.2451);
\coordinate (u119) at (7.6599,0.4254);
\coordinate (u122) at (8.1832,10.1665);
\coordinate (u133) at (9.9939,-3.2005);

\tikzset{graph edge/.style={line width=1.5pt,shorten >=0.8pt,shorten <=0.8pt}}

\draw [dashed, gray, line width=0.2mm, opacity=0.5] (u70) circle (4.9306cm);

\draw ($(u70)+({-0.17*360/6+180/6}:5.3cm)$) node {a};%
\draw ($(u70)+({0.17*360/6+180/6}:5.3cm)$) node {c};
\draw ($(u70)+({1.83*360/6+180/6}:5.3cm)$) node {b};%
\draw ($(u70)+({2.17*360/6+180/6}:5.3cm)$) node {a};
\draw ($(u70)+({3.83*360/6+180/6}:5.3cm)$) node {c};%
\draw ($(u70)+({4.17*360/6+180/6}:5.3cm)$) node {b};

\draw [line width=1pt] (u70)-- (u68);
\draw [line width=1pt] (u70)-- (u119);
\draw [line width=1pt] (u70)-- (u89);
\draw [line width=1pt] (u89)-- (u93);
\draw [line width=1pt] (u68)-- (u93);
\draw [line width=1pt] (u68)-- (u116);
\draw [line width=1pt] (u116)-- (u119);
\draw [line width=1pt] (u119)-- (u72);
\draw [line width=1pt] (u72)-- (u89);
\draw [line width=1pt] (u116)-- (u112);
\draw [line width=1pt] (u116)-- (u110);
\draw [line width=1pt] (u133)-- (u72);
\draw [line width=1pt] (u72)-- (u75);
\draw [line width=1pt] (u101)-- (u93);
\draw [line width=1pt] (u93)-- (u96);

\begin{scriptsize}

\draw [fill=white] (u70) circle (5.1pt);
\draw [fill=white] (u68) circle (5.1pt);
\draw [fill=white] (u119) circle (5.1pt);
\draw [fill=white] (u89) circle (5.1pt);
\draw [fill=white] (u93) circle (5.1pt);
\draw [fill=white] (u116) circle (5.1pt);
\draw [fill=white] (u72) circle (5.1pt);

\end{scriptsize}
\end{tikzpicture}
		\caption{Quadrangulations of the projective plane with even (left) and odd (right) noncontractible cycles.}
		\label{fig:Odd-EvenNonContractible}
	\end{figure}
	
		\begin{theorem}
		If $G$ is a graph embedded on the projective plane where each face is an even cycle of length at most $2k$  ($k\geq 2$), then either $G$ is bipartite or satisfies $\chi_c(G)\geq \frac{2k}{k-1} $.
	\end{theorem}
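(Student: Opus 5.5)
The plan is to build a cycle system on the faces of $G$ and apply \Cref{thm:setOfEvenCycles} directly. Assume $G$ is not bipartite.

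\textbf{Step 1: find an odd noncontractible cycle.} Every face is an even cycle. A contractible cycle bounds a disk, and its length has the same parity as the sum of the lengths of the faces inside it, so contractible cycles are even. Hence any odd cycle of $G$, which exists since $G$ is not bipartite, is noncontractible. Fix such a cycle $C_0$ with $|C_0|$ odd.

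\textbf{Step 2: cut along $C_0$ and orient the faces.} Cutting the projective plane along the noncontractible simple closed curve $C_0$ leaves an open disk. Its boundary is the closed walk that traverses $C_0$ twice. I would check this using the standard model: a neighbourhood of $C_0$ is a Möbius band with core $C_0$, so both sides of $C_0$ lie on a single boundary curve that double covers $C_0$ in the same direction. The faces of $G$ become the faces of a plane graph drawn in this disk. Take $\mathcal{C}$ to be the set of all facial cycles. Let $D$ orient each one clockwise with respect to the disk, which is the choice suggested at the start of this subsection. Every face has length at most $2k$. Every edge lies on exactly two faces, namely the two sides of the edge, so each edge is in an even number of cycles of $\mathcal{C}$.

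\textbf{Step 3: compute the $(\mathcal{C},D)$-degrees.} Take an edge $e$ not on $C_0$. Its two incident faces are adjacent inside the disk, and clockwise orientations of adjacent faces traverse a shared edge in opposite directions, so $e$ has degree $0$. Take an edge $e$ on $C_0$. Its two incident faces sit on the two copies of $e$ on the disk boundary. Clockwise orientations of the faces traverse the boundary edges consistently with the boundary walk, and the boundary walk covers $C_0$ twice in the same direction. So both faces traverse $e$ in the same direction, and the degree of $e$ is $\pm 2$. The total degree is therefore a sum of $|C_0|$ terms, each equal to $\pm 2$, and each is $\equiv 2 \pmod 4$. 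Since $|C_0|$ is odd, the sum is $\equiv 2 \pmod 4$. All hypotheses of \Cref{thm:setOfEvenCycles} now hold, and it gives $\chi_c(G)\geq \frac{2k}{k-1}$.

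\textbf{Main obstacle.} The delicate point is Step 3 for edges of $C_0$: showing that the two face orientations agree on $e$ rather than cancel. This is exactly where the one-sidedness of the projective plane enters. I would first argue it with the Möbius band picture: going once around $C_0$ flips sides, so the disk boundary reads $C_0C_0$ and not $C_0C_0^{-1}$. If needed, I would make this precise with the standard polygon representation of the projective plane, with boundary word $aa$. A secondary point is checking that the embedding is cellular, so that each edge indeed lies on two distinct faces. This follows from the faces being cycles; if some edge had the same face on both sides, its degree would be $0$ or $\pm 2$ and would be handled by the same computation.
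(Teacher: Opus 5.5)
Your proof is correct and follows the paper's argument essentially step for step. You show that an odd cycle must be noncontractible and cut along it, so that the boundary of the resulting disk traverses $C_0$ twice in the same direction. Orienting all faces clockwise then gives degree $0$ to internal edges and $\pm 2$ to edges of $C_0$, so the total is $2 \pmod 4$ and \Cref{thm:setOfEvenCycles} applies. Your extra justifications, namely the parity argument for contractible cycles and the M\"obius-band reason the boundary reads $C_0C_0$ rather than $C_0C_0^{-1}$, supply details the paper leaves implicit.
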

	
\begin{proof}
		We assume the graph is not bipartite, so there exists an odd cycle $C$, which is not contractible because all faces are even.
	Let $G/C$ be the plane graph obtained from $G$ by cutting along $C$. Observe that in $G/C$ each edge of $C$ appears twice on the outer face. Let $\cal C$ be the set of cycles of $G$, corresponding to all bounded faces of $G/C$. The assignment of directions $D$ is the standard one, that is to assign clockwise direction to all faces based on the planar embedding of $G/C$.
	Observe that all the internal edges (meaning the edges not in $C$) are used in exactly two faces in two opposite orientations therefore, their $(\cal C,D)$-degree is $0$. In contrast, the edges of $C$ will all be oriented twice in the same direction so their $(\cal C,D)$-degree is $2$. Since there is an odd number of edges in $C$, we have that the sum of $(\cal C,D)$-degrees is $2\pmod 4$, so \Cref{thm:setOfEvenCycles} applies and we obtain the desired result.
	\end{proof}

	For higher surfaces, we have the following, which is written as an extension of \cite{MS02}. In particular, given a graph $G$ embedded on a surface $S$, when cutting along a cycle $C$ we consider both the geometric notion and graph notion.
	
	\begin{theorem}
		Suppose $G$ is a graph with a 2-cell embedding on a surface where each face is an even cycle of length at most $2k$ ($k\geq 2$). Assume there are disjoint cycles $C_1,\dots,C_g$ such that after cutting along all of them, we obtain a sphere with $g$ holes and $g$ Möbius strips, an odd number of which are non-bipartite, then $\chi_c(G) \geq \frac{2k}{k-1}$.
	\end{theorem}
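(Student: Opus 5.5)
The plan is to reduce the statement to \Cref{thm:setOfEvenCycles}, exactly as in the projective plane case. We build a system $(\mathcal{C},D)$ consisting of all facial cycles of $G$, with directions chosen so that every edge has $(\mathcal{C},D)$-degree $0$ or $\pm 2$. We then show that the sum of the degrees is $2 \pmod 4$ precisely because an odd number of the Möbius strips are non-bipartite. Every face is an even cycle of length at most $2k$, and every edge lies in exactly two faces (2-cell embedding), so the first two hypotheses of \Cref{thm:setOfEvenCycles} hold automatically. Only the choice of $D$ and the degree count need work.

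\textbf{Step 1: directions on the orientable part.} Let $\Sigma_0$ be the sphere with $g$ holes and $M_1,\dots,M_g$ the Möbius strips, with $\partial M_i = C_i$. Since $\Sigma_0$ is orientable, fix an orientation of it and direct every face inside $\Sigma_0$ clockwise. Every edge interior to $\Sigma_0$ is then traversed once in each direction by its two faces, so its degree is $0$. This also induces a direction on each boundary cycle $C_i$.

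\textbf{Step 2: directions on each Möbius strip.} In $M_i$, choose a one-sided (core) cycle $C_i'$ of the graph, meaning a cycle of $G$ inside $M_i$ that is noncontractible in $M_i$. Cutting $M_i$ along $C_i'$ yields a planar annular region. One of its boundary components is $C_i$; the other traverses each edge of $C_i'$ twice. Orient this annulus so that it agrees along $C_i$ with the orientation coming from $\Sigma_0$, and direct all faces of $M_i$ clockwise with respect to it. Then edges of $C_i$ get degree $0$, edges interior to $M_i$ off $C_i'$ get degree $0$, and each edge of $C_i'$ gets degree $\pm 2$, as in the projective plane proof. Summing over the whole system, the total degree is
\[
\sum_{e} \deg(e) \equiv 2\sum_{i=1}^{g}|C_i'| \pmod 4 .
\]

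\textbf{Step 3: parity of the core cycles.} We must show that $|C_i'|$ is odd exactly when $M_i$ (meaning the subgraph of $G$ it contains) is non-bipartite. All faces are even, so parity of length is a $\mathbb{Z}_2$-homology invariant. The cycle space of the subgraph in $M_i$ is generated by the face boundaries together with one core cycle, because $H_1(M_i;\mathbb{Z}_2)=\mathbb{Z}_2$ is generated by the core. Hence every cycle in $M_i$ is even if $C_i'$ is even. Conversely, if $C_i'$ is odd, then $M_i$ is non-bipartite. Thus the total degree is $2$ times the number of non-bipartite strips, modulo $4$, which is $2 \pmod 4$ by hypothesis. \Cref{thm:setOfEvenCycles} then gives $\chi_c(G)\ge \frac{2k}{k-1}$.

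\textbf{Main obstacle.} The delicate step is Step 2: making the cut along $C_i'$ rigorous when $C_i'$ shares vertices with $C_i$ or with faces in a degenerate way. I would first try to choose $C_i'$ as a shortest noncontractible cycle in $M_i$ to keep the cutting clean. If that does not suffice, I would argue directly with local orientations: fix an orientation of $M_i \setminus C_i'$, which is connected and orientable. Each edge of $C_i'$ then receives the same direction from its two incident faces, since crossing a one-sided curve reverses the local orientation. This gives degree $\pm 2$ on $C_i'$ without needing an explicit planar picture.
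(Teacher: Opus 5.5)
Your proposal is correct and is essentially the paper's proof. Both take all facial cycles and a one-sided core cycle $C_i'$ in each Möbius strip, direct the faces clockwise with respect to an orientation of the surface cut along the cores (so exactly the edges of the $C_i'$ get degree $\pm 2$), use that the parity of $|C_i'|$ detects bipartiteness of the strip (which you justify via $H_1(M_i;\mathbb{Z}_2)$, where the paper only asserts it), and conclude with \Cref{thm:setOfEvenCycles}. The one thing to tidy is the degenerate case you flag, which really does occur (if the edges of $M_i$ off $C_i$ form a matching, as in a Möbius ladder, every cycle of $M_i$ uses edges of $C_i$) and is not cured by taking a shortest noncontractible cycle (that can be $C_i$ itself, which is two-sided, so you must minimize over one-sided cycles); it is settled, as in the paper, by orienting the connected orientable complement of $C_1'\cup\dots\cup C_g'$ in the whole surface at once, rather than orienting $\Sigma_0$ and each $M_i\setminus C_i'$ separately and then having to match them along possibly several arcs of $C_i\setminus C_i'$.
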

	\begin{proof}
		Consider the set $\cal C$ of the facial cycles of $G$. For the Möbius strip obtain after cutting along $C_i$, consider a noncontractible cycle $C_i'$ of it (going once around the strip). Note that the Möbius strip is bipartite if and only if $C_i'$ is of even length. Observe that each $C_i'$ is a non-separating cycle of the surface. Furthermore, after cutting the original surface along $C_1',\dots,C_g'$, we obtain a sphere with $g$ holes, denoted $\Gamma$. Then every face of $\Gamma$ can be oriented clockwise. This gives an orientation to each cycle of $\cal C$, resulting in the system $(C,D)$.
		
		For any edge $e$, if $e$ is not in any of the $C_i'$ then $e$ is in two adjacent faces in $\Gamma$ and its $(\cal C,D)$-degree is $0$. Otherwise, $e$ appear twice in $\Gamma$, bordering the hole made by some $C_i'$ in which case, its $(\cal C,D)$-degree is $2$.
		Since an odd number of the $C_i'$ are non-bipartite, the sum of the $(\cal C,D)$-degrees is $2 \pmod 4$ and we may apply \Cref{thm:setOfEvenCyclesStronger} for a contradiction. \end{proof}
	
	A face walk in a graph $G$ embedded on a surface is a walk in the signed dual (using the faces as the set of cycles). It is noncontractible if the walk consists of a noncontractible cycle.
	
	\begin{theorem}\label{thm:proj_planar_odd}
		If $G$ is a graph embedded on the projective plane, whose faces are $2k+1$-cycles, then: 
		\begin{itemize}		
			\item If $G$ has an odd degree vertex or an even length noncontractible face-walk, then $\chi_c(G) \geq \frac{4k}{2k-1}$
			\item Otherwise $G$ is eulerian with all noncontractible face-walks of odd size and $\chi_c(G) = \frac{2k+1}{k}$.
		\end{itemize}
	\end{theorem}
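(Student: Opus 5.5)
The plan is to apply \Cref{thm:balanced-dual} with $\mathcal{C}$ taken to be the set of all facial $(2k+1)$-cycles of $G$. Everything then reduces to deciding when the signed dual $\widehat S(\mathcal{C})$ is balanced. The sign of a closed walk in the dual does not depend on the chosen directions, by \Cref{obs:switch_dual}. So I may compute signs with whatever convenient orientation of the faces I like.

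\textbf{First item.}
\begin{itemize}
\item \emph{Odd-degree vertex.} Let $v$ be a vertex of degree $d$. The faces around $v$ form a closed face-walk of length $d$ in the dual. Orient these faces clockwise with respect to a small disk around $v$. Then two consecutive faces traverse their common edge in opposite directions, so every dual edge on this walk is negative. The walk therefore has sign $(-1)^d$, which is negative when $d$ is odd.
\item \emph{Even noncontractible face-walk.} Cut the projective plane along a noncontractible curve to obtain a disk, and orient every face clockwise in that disk. A dual edge whose primal edge lies in the interior of the disk is negative. A dual edge whose primal edge lies on the cut is positive, since its two sides are glued with reversed orientation. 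A noncontractible face-walk of length $L$ crosses the cut an odd number $c$ of times, so its sign is $(-1)^{L-c}$. This is negative when $L$ is even.
\end{itemize}
In both cases $\widehat S(\mathcal{C})$ is unbalanced, and \Cref{thm:balanced-dual} gives $\chi_c(G)\ge \frac{4k}{2k-1}$.

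\textbf{Second item, balance of the dual.} Now assume $G$ is Eulerian and every noncontractible face-walk has odd length. I first show that $\widehat S(\mathcal{C})$ is balanced. The closed walks of the dual, up to sign (which is a $\mathbb{Z}_2$-valued homomorphism on the cycle space), are generated by two kinds of walks: the rotations around the vertices, and a single noncontractible walk. The rotations around vertices generate all contractible closed walks, because the dual graph together with these rotation cycles is a cellular decomposition of the surface. By the computation above, a rotation around $v$ has sign $(-1)^{\deg v}=+$. Any noncontractible walk has odd length $L$ and an odd number $c$ of cut crossings, so its sign is $(-1)^{L-c}=+$. Hence the dual is balanced. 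This topological generation claim is the step needing the most care; I would justify it via the cellular decomposition just described.

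\textbf{Second item, constructing the coloring.} Since the dual is balanced, I can switch it to be all positive. That means there is a choice of directions on the faces so that each edge is traversed in the same direction by both faces containing it. This gives a well-defined orientation of $G$ in which every face is a directed $(2k+1)$-cycle. Assign to each arc the tension $k \pmod{2k+1}$.
\begin{itemize}
\item Along every face the total is $k(2k+1)\equiv 0$.
\item The faces generate the integral cycle space of $G$ modulo one noncontractible cycle $N$, and twice $N$ is an integer combination of faces because $H_1(\mathbb{RP}^2)=\mathbb{Z}_2$. So $2t(N)\equiv 0 \pmod{2k+1}$, and since $2k+1$ is odd, $t(N)\equiv 0$.
\end{itemize}
The tension is therefore the coboundary of a potential $\phi: V(G)\to \mathbb{Z}_{2k+1}$. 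Adjacent vertices differ by $\pm k$, so $\phi$ is a homomorphism to $K_{(2k+1)/k}$, giving $\chi_c(G)\le \frac{2k+1}{k}$. Each face is an odd $(2k+1)$-cycle, which gives the matching lower bound, so $\chi_c(G)=\frac{2k+1}{k}$.
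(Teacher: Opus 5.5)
Your proposal is correct and follows the paper's proof. You apply Theorem~\ref{thm:balanced-dual} to the facial cycles, decide balance of the dual through the vertex rotations and the noncontractible face-walks, and in the balanced case build a $\mathbb{Z}_{2k+1}$-tension checked on the faces plus one noncontractible cycle $N$, using that $2N$ lies in the integer span of the faces. Beyond that, your cut-disk sign formula $(-1)^{L-c}$ and your homological argument that vertex rotations generate all contractible dual walks make rigorous what the paper's cross-cap argument leaves implicit, and using tension $k$ instead of $1$ changes nothing since $K_{(2k+1)/k}\cong C_{2k+1}$.
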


	\begin{proof}
	
		Consider the set $\cal C$ of the faces of $G$. If the dual $\widehat S(\cal C)$ is unbalanced, then by \Cref{thm:balanced-dual}, $\chi_c(G) \geq \frac{4k}{2k-1}$.
		If the dual is balanced, then we show that $\chi_c(G) = \frac{2k+1}{k}$ ($G$ maps to its shortest cycle).
		Since the dual is balanced, there exists a switching of the vertices such that all edges are positive. Equivalently, we may chose a direction for each face such that each edge is oriented in the same direction in both faces containing it. This induces an orientation $\tau$ of the edges of $G$, that we will show to be a $(\mathbb{Z}_{2k+1}, +1)$-tension. 
	For that, we need to show that for any closed walk of $G$, the weighted sum of the edges (with respect to $\tau$) is $0 \mod 2k+1$.
	Actually, it is enough to prove it on a generating set of cycles: All the faces and one noncontractible cycle, say $C$. Since the faces are oriented as directed cycles, and are of length $2k+1$, the property holds for them.
	Furthermore, suppose the noncontractible cycle $C$ sums to $x \mod 2k+1$, then the walk $2C$ going twice along $C$ sums to $2x \mod 2k+1$. But, by considering cutting along $C$, we observe that $2C$ is a binary sum of faces and thus $2x = 0 \mod 2k+1$ so $x = 0 \mod 2k+1$.
	Therefore, $\tau$ is indeed a $(\mathbb{Z}_{2k+1}, \pm 1)$-tension, so $G$ admits a homomorphism to $C_{2k+1}$, in other words, $\chi_c(G) = \frac{2k+1}{k}$.
	
	Finally, it remains to show that $\widehat S(\cal C)$ is balanced if and only if $G$ is eulerian and has no even noncontractible face-walk.
	Since reversing the direction of a face is equivalent to switching the corresponding vertex in $\widehat S(\cal C)$, we may work with any direction on the faces. To show that it is eulerian, consider the face-walk $W_x$ around a vertex $x$ of $G$. Noting the local planarity around the vertex and orienting the faces clockwise, we conclude that all edges of $W_x$ are negative therefore, in order to be balanced, it needs to be of even length.
	Moreover, let $W$ be a noncontractible walk. We view $G$ as embedded on the plane, with a cross-cap. Observe that an odd number of faces in $W$ must intersect the cross-cap. We direct the faces of $G$ which do not intersect the cross-cap in the clockwise direction, and the other ones are directed arbitrarily.
	Each face of $W$ intersecting the cross-cap introduces one positive and one negative edge, the other ones, each introduce two negative edges.
	Thus, the walk corresponding to $W$ in $\widehat S(\cal C)$ has an odd number of positive edges, hence, it is balanced if and only if it is of odd length.
	\end{proof}
	
	An example of a triangle-free eulerian graph embedded on the projective plane such that all faces are 5-cycles is given \Cref{fig:EulerainFiveFaces}. Since the dual is not balanced, its circular chromatic number is at least 3 and in this example. since it is 3-degenerate, its circular chromatic number is exactly 3.
	
	\begin{figure}
		\centering
		\scalebox{0.8}{\input{Tikzit/FiveFace.tex}}
		\caption{An eulerian projective planar graph, with a negative face walk }
		\label{fig:EulerainFiveFaces}
	\end{figure}
	
	\begin{corollary}\label{coro:planarOddFace}
		Let $G$ be a graph embedded on the plane such that all faces are $2k+1$-cycles. If $G$ has an odd degree vertex, then $\chi_c(G) \geq \frac{4k}{2k-1}$.
	\end{corollary}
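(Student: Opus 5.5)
The plan is to apply \Cref{thm:balanced-dual} to the set $\mathcal{C}$ of facial cycles of $G$. Every face is a $(2k+1)$-cycle, so it suffices to show that an odd degree vertex makes the dual $\widehat S(\mathcal{C})$ unbalanced; the theorem then gives $\chi_c(G)\geq \frac{4k}{2k-1}$ directly. This is the planar special case of the first item of \Cref{thm:proj_planar_odd}, but the direct argument on the plane is cleaner because no cross-cap is involved.

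\textbf{Step 1: choose directions.} Orient every facial cycle clockwise with respect to the planar embedding, which gives the system $(\mathcal{C},D)$. We take the faces to be cycles and, as in the classic planar dual, assume the graph is 2-connected. Each edge $uv$ then lies on exactly two faces. In a planar embedding, two faces sharing an edge traverse it in opposite directions when both are oriented clockwise. Hence every edge of $S(\mathcal{C},D)$ is negative, so $S(\mathcal{C},D)$ is the usual planar dual with an all-negative signature.

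\textbf{Step 2: find a negative closed walk.} Let $x$ be a vertex of odd degree $d$. The faces around $x$, taken in the rotation order at $x$, are $F_1,\dots,F_d$, where consecutive faces $F_i,F_{i+1}$ share the edge $xy_i$ (indices taken mod $d$). This gives a closed walk $F_1F_2\cdots F_dF_1$ of length $d$ in the dual. Every edge of it is negative, so its sign is $(-1)^d=-1$. A closed walk of negative sign contains a negative cycle, so $S(\mathcal{C},D)$ is unbalanced. By \Cref{obs:switch_dual}, balance does not depend on $D$, so $\widehat S(\mathcal{C})$ is unbalanced, and \Cref{thm:balanced-dual} finishes the proof.

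\textbf{Main obstacle.} The only delicate point is degenerate local structure at $x$. Two faces around $x$ may coincide, or a face may share two edges at $x$ with another face. In that case the walk $W_x$ may revisit dual vertices or use parallel dual edges. This is harmless because balance is defined through closed walks: a negative closed walk always decomposes into cycles, at least one of which is negative. I would also check that every face of a planar graph whose faces are all $(2k+1)$-cycles really is a cycle, so that each edge lies in exactly two distinct faces. If that fails, one can instead cite \Cref{thm:proj_planar_odd}'s argument, which handles the same local face-walk $W_x$ in the same way.
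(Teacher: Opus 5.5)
Your proposal is correct and follows the paper's own route: the corollary comes from the first part of the proof of \Cref{thm:proj_planar_odd}, which takes the faces as $\mathcal{C}$ and orients them clockwise, so that the face-walk $W_x$ around a vertex uses only negative dual edges, and then applies \Cref{thm:balanced-dual} when $\deg(x)$ is odd. The degeneracies you worry about cannot occur, because faces being cycles already forces $G$ to be 2-connected and makes the $d$ faces around $x$ pairwise distinct, so $W_x$ is itself a negative cycle of the dual.
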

	
	 \begin{corollary}\label{coro:Odd-face-dichotomy}
		If $G$ is projective planar graph with an embedding where all faces are 5-cycles, 
		then either $G$ is eulerian and all noncontractible facial walks are of odd length in which case $\chi_c(G)=\frac{5}{2}$ otherwise $\chi_c(G)\geq 3$.            	
	\end{corollary}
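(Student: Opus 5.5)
This corollary is the case $k=2$ of \Cref{thm:proj_planar_odd}, so the plan is to substitute $2k+1=5$ and read off both alternatives. For $k=2$ the bound $\frac{4k}{2k-1}$ becomes $\frac{8}{3}$, and the value $\frac{2k+1}{k}$ becomes $\frac{5}{2}$. The second bullet of \Cref{thm:proj_planar_odd} therefore gives the first alternative directly: if $G$ is eulerian and every noncontractible facial walk has odd length, then $\chi_c(G)=\frac{5}{2}$.

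The first bullet only gives $\chi_c(G)\geq \frac{8}{3}$, which is weaker than the claimed bound of $3$. To close this gap I will use the fact that a graph embedded on the projective plane with all faces of length $5$ is sparse. If $G$ has $n$ vertices, $m$ edges and $f$ faces, then $2m=5f$, and Euler's formula on the projective plane gives $n-m+f=1$. Together these yield $m=\frac{5}{3}(n-1)$, so the average degree is below $\frac{10}{3}<4$.

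I also need this sparsity for subgraphs. Every subgraph $H$ of $G$ is again projective planar and has girth at least $4$, since a triangle would have to be contractible (all faces are $5$-cycles) or would contradict the face structure. Hence $H$ has average degree less than $4$, which gives a vertex of degree at most $3$. So $G$ is $3$-degenerate, the same observation the paper makes after \Cref{thm:proj_planar_odd}. In fact, a triangle-free projective planar graph satisfies $m\le 2n-2$, so it has a vertex of degree at most $3$.

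That degeneracy only gives $\chi\le 4$. The key point I then need is that $\chi_c(G)$ is a rational number of the form $p/q$ with $p\le |V(G)|$, and I will try to show that no such value lies in $[\frac{8}{3},3)$. This is not automatic, and it is the main obstacle. My first attempt would sharpen the winding argument directly. For $k=2$ and $r<3$, each edge of a $5$-cycle travels clockwise a distance in $[1,r-1]\subset[1,2)$. So the clockwise winding numbers of $\vec C$ and $\cev C$ lie in $\{2,3\}$, as in \Cref{prop:2k+1-cycle-I}.

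Next I want to show that when $r<3$ the edge with winding $2$ travels less than $r/2$, so that \Cref{cor:OddCyclesCommonEdge} still holds. Then the proof of \Cref{thm:balanced-dual} would go through with $r<3$ in place of $\frac{8}{3}$. If I can establish that the shared-edge orientation conclusion holds for all $r<3$ on pairs of adjacent $5$-faces, the dichotomy follows from \Cref{thm:proj_planar_odd} verbatim. Verifying this sharpened version of \Cref{prop:2k+1-cycle-II} is the step I expect to be hardest.
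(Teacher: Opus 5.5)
\paragraph{Your reduction matches the paper, but the bridge to 3 fails.} Your first paragraph is exactly the paper's derivation: the corollary is obtained by setting $k=2$ in \Cref{thm:proj_planar_odd}, with no further argument. That gives $\chi_c(G)=\frac52$ in the balanced case, but otherwise only $\chi_c(G)\ge\frac{4k}{2k-1}=\frac83$. None of your proposed bridges to $3$ works.

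\begin{itemize}
\item Sparsity and $3$-degeneracy only give upper bounds. Your girth claim is also unjustified, since noncontractible triangles are not excluded.
\item The rationality of $\chi_c$ rules out no interval.
\item The sharpening of \Cref{prop:2k+1-cycle-II} to $r<3$ is false. For $r=2.8$, the clockwise travels $(1.6,1,1,1,1)$ are admissible for a circular $r$-coloring of $C_5$. They sum to $2r$, so this orientation has winding number $2$, yet $1.6>\frac r2$.
\end{itemize}

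More generally, with four edges of travel at least $1$, an edge of travel at least $\frac r2$ fits into a winding-$2$ orientation of a $5$-cycle exactly when $4+\frac r2\le 2r$, i.e.\ $r\ge\frac83$. So $\frac{4k}{2k-1}$ is the true threshold of that proposition and of \Cref{cor:OddCyclesCommonEdge}. Beyond it, two faces may orient a common edge oppositely, and the argument of \Cref{thm:balanced-dual} no longer applies.

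\paragraph{The bound $3$ is false.} In fact no argument can close the gap. Take vertices $c_0,\dots,c_4,z,x_1,x_2,y_1,y_2$ and declare the six $5$-cycles $c_0c_1c_2c_3x_1$, $c_3c_4y_1zx_2$, $c_4c_0c_1c_2y_2$, $c_2c_3c_4c_0z$, $c_3x_1c_0zx_2$, $c_4y_1zc_2y_2$ to be faces. Each of the $15$ edges lies on exactly two of them, and the faces around each vertex form a single cycle. So this is a cellular embedding of Euler characteristic $10-15+6=1$, i.e.\ in the projective plane.

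Inside the face $c_0zc_2c_3c_4$, add vertices $w,p,q$ and the edges $wc_0,wp,wq,pc_2,qc_3$. This replaces that face by the $5$-faces $wc_0zc_2p$, $wpc_2c_3q$, $wqc_3c_4c_0$. The resulting $G'$ has all faces $5$-cycles and $\deg(w)=3$. It is therefore not Eulerian, and the corollary asserts $\chi_c(G')\ge 3$.

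Now measure $\mathcal{O}_{20/7}$ in units of $\frac17$, so the circumference is $20$ and adjacent vertices need cyclic distance at least $7$. Place
\begin{itemize}
\item $c_0,y_1$ at $0$; $c_1,z$ at $8$; $c_2,x_2$ at $16$; $c_3,y_2$ at $4$; $c_4,x_1$ at $12$;
\item $w$ at $10$, $p$ at $3$, $q$ at $17$.
\end{itemize}
The old vertices receive a scaled $C_5$-coloring. Every old edge joins points at cyclic distance $8$, and the five new edges have distances $10,7,7,7,7$. Hence $\chi_c(G')\le\frac{20}{7}<3$, while $\chi_c(G')\ge\frac83$ by \Cref{thm:proj_planar_odd}.

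\paragraph{What is actually provable.} The correct dichotomy is $\chi_c(G)=\frac52$ versus $\chi_c(G)\ge\frac83$. This is what your first paragraph, and the paper's argument, establish. The constant $3$ in the statement (and in the abstract) does not follow from $\frac{4k}{2k-1}$ at $k=2$, and the example above shows it is false.
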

	
	For higher surfaces, \Cref{thm:balanced-dual} can still be applied, therefore, if the dual $\widehat S(\cal C)$ is unbalanced, then $\chi_c(G) \geq \frac{4k}{2k-1}$. However, the argument that, if $\widehat S(\cal C)$ is balanced, then $G$ admits a homomorphism to $C_{2k+1}$ doesn't hold anymore. \Cref{fig:KleinBottleCounterExample} shows an example of a triangulation of a Klein bottle which is not 3-colorable despite having a balanced signed dual.

\begin{figure}[!htb]
	\centering
	\scalebox{1.2}{\begin{tikzpicture}
	\begin{pgfonlayer}{nodelayer}
		\node [style=small black empty] (7) at (-3, -3) {};
		\node [style=small black empty] (8) at (-4, -2) {};
		\node [style=small black empty] (12) at (-4.5, 1.5) {};
		\node [style=small black empty] (14) at (-3, 3) {};
		\node [style=small black empty] (15) at (-3, 0) {};
		\node [style=small black empty] (16) at (-5, -1) {};
		\node [style=small black empty] (17) at (-1.5, 3) {};
		\node [style=small black empty] (18) at (-1.5, -4.5) {};
		\node [style=none] (20) at (-4.75, 4.25) {};
		\node [style=none] (21) at (-3.75, 4.25) {};
		\node [style=none] (22) at (-3, 4.25) {};
		\node [style=none] (23) at (-2, 4.25) {};
		\node [style=none] (24) at (-1, 4.25) {};
		\node [style=none] (27) at (-1, -5) {};
		\node [style=none] (28) at (-2, -5) {};
		\node [style=none] (29) at (-3, -5) {};
		\node [style=none] (30) at (-4, -5) {};
		\node [style=none] (57) at (0, -5) {};
		\node [style=small black empty] (62) at (0, -3) {};
		\node [style=small black empty] (63) at (0, 0) {};
		\node [style=small black empty] (64) at (0, 3) {};
		\node [style=none] (66) at (-5.5, 3.5) {};
		\node [style=none] (67) at (-5.5, 3) {};
		\node [style=none] (68) at (-5.5, 2.5) {};
		\node [style=none] (69) at (-5.5, 0.5) {};
		\node [style=none] (70) at (-5.5, 0) {};
		\node [style=none] (71) at (-5.5, -0.5) {};
		\node [style=none] (72) at (-5.5, -2) {};
		\node [style=none] (73) at (-5.5, -2.5) {};
		\node [style=none] (74) at (-5.5, -3) {};
		\node [style=none] (85) at (-4.5, -5) {};
		\node [style=none] (86) at (-5.5, -3.75) {};
		\node [style=none] (92) at (-5.5, 4.25) {};
		\node [style=none] (93) at (-5.5, -5) {};
		\node [style=small black empty] (95) at (3, -3) {};
		\node [style=small black empty] (96) at (2, -2) {};
		\node [style=small black empty] (97) at (1.5, 1.5) {};
		\node [style=small black empty] (98) at (3, 3) {};
		\node [style=small black empty] (99) at (3, 0) {};
		\node [style=small black empty] (100) at (1, -1) {};
		\node [style=small black empty] (101) at (4.25, 3) {};
		\node [style=small black empty] (102) at (4.25, -4.5) {};
		\node [style=none] (103) at (1, 4.25) {};
		\node [style=none] (104) at (2, 4.25) {};
		\node [style=none] (105) at (3, 4.25) {};
		\node [style=none] (106) at (4, 4.25) {};
		\node [style=none] (107) at (4.5, 4.25) {};
		\node [style=none] (108) at (4.75, -5) {};
		\node [style=none] (109) at (3.75, -5) {};
		\node [style=none] (110) at (3, -5) {};
		\node [style=none] (111) at (2, -5) {};
		\node [style=none] (112) at (5.75, -5) {};
		\node [style=small black empty] (113) at (5, -3) {};
		\node [style=small black empty] (114) at (5, 0) {};
		\node [style=small black empty] (115) at (5, 3) {};
		\node [style=none] (125) at (5.75, 3.5) {};
		\node [style=none] (126) at (5.75, 3) {};
		\node [style=none] (127) at (5.75, 2.5) {};
		\node [style=none] (128) at (5.75, 0.5) {};
		\node [style=none] (129) at (5.75, 0) {};
		\node [style=none] (130) at (5.75, -0.5) {};
		\node [style=none] (131) at (5.75, -2) {};
		\node [style=none] (132) at (5.75, -2.5) {};
		\node [style=none] (133) at (5.75, -3) {};
		\node [style=none] (134) at (1, -5) {};
		\node [style=none] (136) at (5.75, -3.75) {};
		\node [style=none] (137) at (0, 4.25) {};
		\node [style=none] (139) at (5.75, 4.25) {};
		\node [style=none] (140) at (5.75, 4.25) {};
		\node [style=none] (143) at (5.75, -5) {};
		\node [style=none] (144) at (5.75, -4.5) {};
		\node [style=none] (145) at (-5.5, -4.5) {};
		\node [style=none] (146) at (-5, -5) {};
		\node [style=none] (147) at (5, 4.25) {};
		\node [style=none] (149) at (-3.75, 4.75) {\tiny{b}};
		\node [style=none] (150) at (-3, 4.75) {\tiny{c}};
		\node [style=none] (151) at (-2, 4.75) {\tiny{d}};
		\node [style=none] (152) at (-1, 4.75) {\tiny{e}};
		\node [style=none] (153) at (0, 4.75) {\tiny{f}};
		\node [style=none] (154) at (1, 4.75) {\tiny{g}};
		\node [style=none] (155) at (2, 4.75) {\tiny{h}};
		\node [style=none] (156) at (3, 4.75) {\tiny{i}};
		\node [style=none] (191) at (5, 4.75) {\tiny{l}};
		\node [style=none] (157) at (4, 4.75) {\tiny{j}};
		\node [style=none] (158) at (4.5, 4.75) {\tiny{k}};
		\node [style=none] (159) at (6.25, 3.5) {\tiny{a}};
		\node [style=none] (160) at (6.25, 2.5) {\tiny{n}};
		\node [style=none] (161) at (-4.75, 4.75) {\tiny{a}};
		\node [style=none] (162) at (6.25, 3) {\tiny{m}};
		\node [style=none] (163) at (6.25, 0.5) {\tiny{o}};
		\node [style=none] (164) at (6.25, -0.5) {\tiny{q}};
		\node [style=none] (165) at (6.25, 0) {\tiny{p}};
		\node [style=none] (166) at (6.25, -2) {\tiny{r}};
		\node [style=none] (167) at (6.25, -3) {\tiny{t}};
		\node [style=none] (168) at (6.25, -2.5) {\tiny{s}};
		\node [style=none] (169) at (-6, 3.5) {\tiny{a}};
		\node [style=none] (170) at (-6, 2.5) {\tiny{n}};
		\node [style=none] (171) at (-6, 3) {\tiny{m}};
		\node [style=none] (172) at (-6, 0.5) {\tiny{o}};
		\node [style=none] (173) at (-6, -0.5) {\tiny{q}};
		\node [style=none] (174) at (-6, 0) {\tiny{p}};
		\node [style=none] (175) at (-6, -2) {\tiny{r}};
		\node [style=none] (176) at (-6, -3) {\tiny{t}};
		\node [style=none] (177) at (-6, -2.5) {\tiny{s}};
		\node [style=none] (178) at (3.75, -5.5) {\tiny{b}};
		\node [style=none] (179) at (3, -5.5) {\tiny{c}};
		\node [style=none] (180) at (2, -5.5) {\tiny{d}};
		\node [style=none] (181) at (1, -5.5) {\tiny{e}};
		\node [style=none] (182) at (0, -5.5) {\tiny{f}};
		\node [style=none] (183) at (-1, -5.5) {\tiny{g}};
		\node [style=none] (184) at (-2, -5.5) {\tiny{h}};
		\node [style=none] (185) at (-3, -5.5) {\tiny{i}};
		\node [style=none] (186) at (-4, -5.5) {\tiny{j}};
		\node [style=none] (187) at (-4.5, -5.5) {\tiny{k}};
		\node [style=none] (194) at (-5, -5.5) {\tiny{l}};
		\node [style=none] (188) at (4.75, -5.5) {\tiny{a}};
		\node [style=none] (189) at (-6, -3.75) {\tiny{k}};
		\node [style=none] (193) at (-6, -4.25) {\tiny{l}};
		\node [style=none] (190) at (6.25, -3.75) {\tiny{k}};
		\node [style=none] (192) at (6.25, -4.25) {\tiny{l}};

	\end{pgfonlayer}
	\begin{pgfonlayer}{edgelayer}
		\draw [style=Arc] (7) to (8);
		\draw [style=Arc] (12) to (14);
		\draw [style=Arc] (14) to (15);
		\draw [style=Arc] (15) to (12);
		\draw [style=Arc] (15) to (16);
		\draw [style=Arc] (16) to (8);
		\draw [style=Arc] (8) to (15);
		\draw [style=Arc] (15) to (17);
		\draw [style=Arc] (17) to (14);
		\draw [style=Arc] (15) to (7);
		\draw [style=Arc] (7) to (18);
		\draw [style=Arc] (7) to (30.center);
		\draw [style=Arc] (29.center) to (7);
		\draw [style=Arc] (18) to (28.center);
		\draw [style=Arc] (27.center) to (18);
		\draw [style=Arc] (17) to (24.center);
		\draw [style=Arc] (23.center) to (17);
		\draw [style=Arc] (14) to (22.center);
		\draw [style=Arc] (21.center) to (14);
		\draw [style=Arc] (63) to (62);
		\draw [style=Arc] (63) to (64);
		\draw [style=Arc] (64) to (17);
		\draw [style=Arc] (17) to (63);
		\draw [style=Arc] (63) to (15);
		\draw [style=Arc] (7) to (63);
		\draw [style=Arc] (62) to (7);
		\draw [style=Arc] (18) to (62);
		\draw [style=Arc] (62) to (57.center);
		\draw [style=Arc] (74.center) to (7);
		\draw [style=Arc] (8) to (73.center);
		\draw [style=Arc] (72.center) to (16);
		\draw [style=Arc] (16) to (71.center);
		\draw [style=Arc] (70.center) to (15);
		\draw [style=Arc] (12) to (69.center);
		\draw [style=Arc] (68.center) to (12);
		\draw [style=Arc] (14) to (67.center);
		\draw [style=Arc] (66.center) to (20.center);
		\draw [style=Arc] (85.center) to (86.center);
		\draw [style=Arc] (95) to (96);
		\draw [style=Arc] (97) to (98);
		\draw [style=Arc] (98) to (99);
		\draw [style=Arc] (99) to (97);
		\draw [style=Arc] (99) to (100);
		\draw [style=Arc] (100) to (96);
		\draw [style=Arc] (96) to (99);
		\draw [style=Arc] (99) to (101);
		\draw [style=Arc] (101) to (98);
		\draw [style=Arc] (99) to (95);
		\draw [style=Arc] (95) to (102);
		\draw [style=Arc] (95) to (111.center);
		\draw [style=Arc] (110.center) to (95);
		\draw [style=Arc] (102) to (109.center);
		\draw [style=Arc] (108.center) to (102);
		\draw [style=Arc] (101) to (107.center);
		\draw [style=Arc] (106.center) to (101);
		\draw [style=Arc] (98) to (105.center);
		\draw [style=Arc] (104.center) to (98);
		\draw [style=Arc] (114) to (113);
		\draw [style=Arc] (114) to (115);
		\draw [style=Arc] (115) to (101);
		\draw [style=Arc] (101) to (114);
		\draw [style=Arc] (114) to (99);
		\draw [style=Arc] (95) to (114);
		\draw [style=Arc] (113) to (95);
		\draw [style=Arc] (102) to (113);
		\draw [style=Arc] (115) to (125.center);
		\draw [style=Arc] (126.center) to (115);
		\draw [style=Arc] (115) to (127.center);
		\draw [style=Arc] (128.center) to (114);
		\draw [style=Arc] (114) to (129.center);
		\draw [style=Arc] (130.center) to (114);
		\draw [style=Arc] (113) to (131.center);
		\draw [style=Arc] (132.center) to (113);
		\draw [style=Arc] (113) to (133.center);
		\draw [style=Arc] (136.center) to (113);
		\draw [style=Arc] (137.center) to (64);
		\draw [style=Arc] (64) to (103.center);
		\draw [style=Arc] (98) to (64);
		\draw [style=Arc] (64) to (97);
		\draw [style=Arc] (97) to (63);
		\draw [style=Arc] (63) to (99);
		\draw [style=Arc] (100) to (63);
		\draw [style=Arc] (62) to (100);
		\draw [style=Arc] (96) to (62);
		\draw [style=Arc] (62) to (95);
		\draw [style=Arc] (134.center) to (62);
		\draw [-{angle 60},dashed] (140.center) to (92.center);
		\draw [-{angle 60},dashed] (93.center) to (143.center);
		\draw [style=Arc] (147.center) to (115);
		\draw [style=Arc] (145.center) to (146.center);
		\draw [style=Arc] (113) to (144.center);
		\draw [-{angle 60},dotted, line width=0.8pt] (93.center) to (92.center);
		\draw [-{angle 60},dotted, line width=0.8pt] (143.center) to (140.center);
	\end{pgfonlayer}
\end{tikzpicture}}        
	\caption{ A 4-chromatic triangulation of the Klein bottle together with an orientation of the faces, whose dual is balanced.
	}
	\label{fig:KleinBottleCounterExample}
\end{figure}

\section{Winding number and circular coloring of signed graphs}\label{sec:Widing-Circular-Signed}

For a real number $r$, a circular $r$-coloring of a signed graph $(G,\sigma)$, if it exists, is a mapping $\phi$ of its vertices to the circle $\cal O_r$ in such a way that if the vertices $x$ and $y$ are adjacent with a negative edge, then $\phi(x)$ and $\phi(y)$ are at distance at least $1$ and if they are adjacent with a positive edge, then the distance of $\phi(x)$ from $-\phi(y)$ (the antipodal of $\phi(y)$) is at least $1$. The smallest value of $r$ for which $(G, \sigma)$ admits a circular $r$-coloring is the \emph{circular chromatic number} of $(G,\sigma)$, denoted $\chi_c(G,\sigma)$. The notion was first defined in \cite{NWZ21} but we have reversed the conditions on the positive and negative edges for better suitability with the theory of balanced coloring. Indeed, the circular chromatic number is a refinement of the balanced chromatic number: $$\chi_b(G,\sigma)=\left\lceil \frac{\chi_c(G,\sigma)}{2} \right\rceil.$$  We note that the circular chromatic number of a signed graph where all edges are negative is the same as the circular chromatic number of its underlying graph. But its balanced chromatic number is $\left \lceil \frac{\chi(G)}{2} \right \rceil$.  We refer to \cite{JMNNQ26} for more on the subject.

Our goal here is to present similar results for signed graphs. Most interesting parameters in the study of signed graphs are those invariant under switching operation.  For a mapping $\phi$ of the vertices of $(G,\sigma)$ to $\cal O$ this is attained by imposing the condition $\phi(-x) =-\phi(x)$. Recall that in graphs (with no signature), a mapping of the vertices of a $k$-cycle $C$ to $\cal O$ leads to $2^k$ possible linear extensions. For a signed cycle $(C, \sigma)$ the possibilities increase because for a linear extension of an edge $xy$ we allow $-\phi(x)$, $-\phi(y)$ to be considered as endpoints as well as $\phi(x)$ and $\phi(x)$. Once again we will consider two special linear extensions of a mapping $\phi$ (illustrated in \Cref{fig:signedClockShortExtension}). In order for them to be well defined we assume the following conditions on $\phi$: the two ends of a positive edge are not mapped to a pair of antipodals points and the two ends of a negative edge are not mapped to the same point. 

\begin{itemize}
	\item The clockwise extension of $\phi$. We note again that when traveling along an edge $x_{i-1}x_{i}$, we may end up at $-\phi(x_{i})$ rather than $\phi(x_{i})$. Assuming we have ended up at $\psi(x_i)\in \{\phi(x_i), -\phi(x_i)\}$, to extend $x_{i}x_{i+1}$, starting from $\psi(x_{i})$ we travel on the clockwise direction of $\cal O$ to $\psi(x_{i+1})$ where  $\psi(x_{i+1})\in \{\phi(x_{i+1}), -\phi(x_{i+1})\}$ is chosen in such a way that the product of the signs of $\psi(x_i)$ and $\psi(x_{i+1})$ is opposite of the sign of the edge $x_{i}x_{i+1}$.

	\item The shortest extension of $\phi$. To extend $x_ix_{i+1}$, starting from $\psi(x_{i})$, we travel on the shorter side of $\cal O$ to $\psi(x_{i+1})$ where $\psi(x_{i+1}) \in \{\phi(x_{i+1}), -\phi(x_{i+1})\}$ is chosen in such a way that the product of the signs of $\psi(x_{i})$, $\psi(x_{i+1})$, and $\sigma(x_ix_{i+1})$ is positive. 
\end{itemize}

\begin{figure}[h]
\centering
\begin{tikzpicture}[scale=1.]
\def\r{1}

\node[fill=white, draw=black, shape=circle,label=above:$x$]   (xnode)  at (0,0) {};
\node[fill=white, draw=black, shape=circle,label=above:$y$]   (ynode)  at (2,0) {};
\draw[red] (xnode) -- (ynode);

\draw[-{Stealth[length=2mm]}] (0.9,0) -- ++ (0.2,0);   

\coordinate (C) at (4,0);
\draw (C) circle (\r);
\node[circle,fill,inner sep=1pt,label=above:$\phi(x)$]   (x)  at ($(C)+(90:\r)$) {};
\node[circle,fill,inner sep=1pt,label=below:$-\phi(x)$] (-x) at ($(C)+(180+90:\r)$) {};
\node[circle,fill,inner sep=1pt,label={[xshift=-3pt,yshift=2pt]above:$\phi(y)$}]  (y)  at ($(C)+(150:\r)$) {};
\node[circle,fill,inner sep=1pt,label={[xshift=3pt,yshift=-2pt]below:$-\phi(y)$}]  (-y) at ($(C)+(180+150:\r)$) {};

\draw[line width=.8mm] (x) arc[start angle=90,end angle=150-180,radius=1];

\coordinate (D) at (7,0);
\draw (D) circle (\r);
\node[circle,fill,inner sep=1pt,label=above:$\phi(x)$]   (x)  at ($(D)+(90:\r)$) {};
\node[circle,fill,inner sep=1pt,label=below:$-\phi(x)$] (-x) at ($(D)+(180+90:\r)$) {};
\node[circle,fill,inner sep=1pt,label={[xshift=-3pt,yshift=2pt]above:$\phi(y)$}]  (y)  at ($(D)+(150:\r)$) {};
\node[circle,fill,inner sep=1pt,label={[xshift=3pt,yshift=-2pt]below:$-\phi(y)$}]  (-y) at ($(D)+(180+150:\r)$) {};

\draw[line width=.8mm] (x) arc[start angle=90,end angle=150-360,radius=1];
\end{tikzpicture}

\begin{tikzpicture}[scale=1.]
\def\r{1}

\node[fill=white, draw=black, shape=circle,label=above:$x$]   (xnode)  at (0,0) {};
\node[fill=white, draw=black, shape=circle,label=above:$z$]   (ynode)  at (2,0) {};
\draw[blue] (xnode) -- (ynode);

\draw[-{Stealth[length=2mm]}] (0.9,0) -- ++ (0.2,0);   

\coordinate (C) at (4,0);
\draw (C) circle (\r);
\node[circle,fill,inner sep=1pt,label=above:$\phi(x)$]   (x)  at ($(C)+(90:\r)$) {};
\node[circle,fill,inner sep=1pt,label=below:$-\phi(x)$] (-x) at ($(C)+(180+90:\r)$) {};
\node[circle,fill,inner sep=1pt,label={[xshift=-4pt,yshift=3pt]above:$\phi(z)$}]  (y)  at ($(C)+(160:\r)$) {};
\node[circle,fill,inner sep=1pt,label={[xshift=6pt,yshift=-2pt]below:$-\phi(z)$}]  (-y) at ($(C)+(180+160:\r)$) {};

\draw[line width=.8mm] (x) arc[start angle=90,end angle=150,radius=1];

\coordinate (D) at (7,0);
\draw (D) circle (\r);
\node[circle,fill,inner sep=1pt,label=above:$\phi(x)$]   (x)  at ($(D)+(90:\r)$) {};
\node[circle,fill,inner sep=1pt,label=below:$-\phi(x)$] (-x) at ($(D)+(180+90:\r)$) {};
\node[circle,fill,inner sep=1pt,label={[xshift=-4pt,yshift=3pt]above:$\phi(z)$}]  (y)  at ($(D)+(160:\r)$) {};
\node[circle,fill,inner sep=1pt,label={[xshift=6pt,yshift=-2pt]below:$-\phi(z)$}]  (-y) at ($(D)+(180+160:\r)$) {};

\draw[line width=.8mm] (x) arc[start angle=90,end angle=150-180,radius=1];

\end{tikzpicture}
\caption{Shortest (left) and clockwise (right) extension of a positive and negative edge $xy$}
\label{fig:signedClockShortExtension}
\end{figure}

One shall note that, since we are allowed to use $-\phi(x)$ along with $\phi(x)$, when traveling along edges of a cycle, starting at $\phi(x_1)$, we may end up at $-\phi(x_1)$. Thus working with winding number of signed graphs and signed cycles, we allow winding number to be half of an integer. One may overcome this by projecting the circle to one of half size, but here we rather allow half integers.

In \Cref{fig:ExtendingSignedCycles} two examples of a linear extensions are given. Each signed cycle in these examples is obtained from the other by switching at a vertex of the other.  In the figures the vertices of the cycles are mapped to points in white. The antipodals of these points, in purple, are used to the define the linear extensions of the mappings when starting from the vertex on the top. The curves in purple present these linear extensions with the associated directions. The two curves are identical, presenting the fact that the curve formed by the linear extension is independent of the switching operation as long $\phi$ is modified according to the switching. That is to say, if a switching applied at $v$, then  $\phi$ is also modified to map $v$ to $-\phi(v)$.
\begin{figure}

\begin{minipage}[h]{.5 \textwidth}
	\hspace{0.5cm}
	\centering
	\scalebox{.5}{
		\begin{tikzpicture}

			\draw  [ black, line width=1mm, opacity=1.92] (0,0) circle (5cm);
			
			\draw node[line width=0.5mm, circle, fill=white, minimum width=1.5mm, inner sep=1mm, draw=black!80, minimum size=2mm]  (1) at ({72}:5){};
			
			\draw node[line width=0.5mm, circle, fill=white, minimum width=1.5mm, inner sep=1mm, draw=black!80, minimum size=2mm]  (2) at ({144}:5){};
			
			\draw node[line width=0.5mm, circle, fill=white, minimum width=1.5mm, inner sep=1mm, draw=black!80, minimum size=2mm]  (3) at ({216}:5){};

			\draw node[line width=0.5mm, circle, fill=white, minimum width=1.5mm, inner sep=1mm, draw=black!80, minimum size=2mm]  (4) at ({288}:5){};
			
			\draw node[line width=0.5mm, circle, fill=white, minimum width=1.5mm, inner sep=1mm, draw=black!80, minimum size=2mm]  (5) at ({0}:5){};

			\draw node[line width=0.5mm, circle, fill=purple, minimum width=1.5mm, inner sep=1mm, draw=black!80, minimum size=2mm]  (6) at ({108}:5){};
			
			\draw node[line width=0.5mm, circle, fill=purple, minimum width=1.5mm, inner sep=1mm, draw=black!80, minimum size=2mm]  (7) at ({180}:5){};
			
			\draw node[line width=0.5mm, circle, fill=purple, minimum width=1.5mm, inner sep=1mm, draw=black!80, minimum size=2mm]  (8) at ({252}:5){};

			\draw node[line width=0.5mm, circle, fill=purple, minimum width=1.5mm, inner sep=1mm, draw=black!80, minimum size=2mm]  (9) at ({324}:5){};
			
			\draw node[line width=0.5mm, circle, fill=purple, minimum width=1.5mm, inner sep=1mm, draw=black!80, minimum size=2mm]  (10) at ({36}:5){};

			\foreach \i/\j in {1/3, 5/2}{ 
				
				\draw[-{Latex[width=0mm 5,  length=4mm]} , blue, opacity=0.6, line width=0.4mm] (\i) to (\j);}
			
			\foreach \i/\j in {3/5, 2/4, 4/1}{ 
				
				\draw[-{Latex[width=0mm 5,  length=4mm]} , red, opacity=0.6, line width=0.4mm] (\i) to (\j);}

			\draw[purple,  line width=0.4mm, -{Latex[width=0mm 5,  length=4mm]}]  (1) to[out=-36, in=-36, distance=-6cm] (3);
			
			\draw[purple,  line width=0.4mm, -{Latex[width=0mm 5,  length=4mm]}]  (3) to[out=18, in=18, distance=-2cm] (7);	
			
			\draw[purple,  line width=0.4mm, -{Latex[width=0mm 5,  length=4mm]}]  (7) to[out=72, in=72, distance=-6cm] (9);	
			
			\draw[purple,  line width=0.4mm, -{Latex[width=0mm 5,  length=4mm]}]  (9) to[out=96, in=144, distance=-2cm] (4);	
			
			\draw[purple,  line width=0.4mm, -{Latex[width=0mm 5,  length=4mm]}]  (4) to[out=78, in=108, distance=-2cm] (8);	
			
			\draw node[black, line width=1mm, opacity=1.92]  () at (0,-9) {\Huge $w\curve{{\color{red}\vec C},\phi,sh}=\frac{1}{2}$};
			
		\end{tikzpicture}
	}
\end{minipage}	
\begin{minipage}[h]{.4 \textwidth}
	\hspace{1.25cm}
	\centering
	\scalebox{.5}{
		\begin{tikzpicture}

			\draw  [ black, line width=1mm, opacity=1.92] (0,0) circle (5cm);
			
			\draw node[line width=0.5mm, circle, fill=white, minimum width=1.5mm, inner sep=1mm, draw=black!80, minimum size=2mm]  (1) at ({72}:5){};
			
			\draw node[line width=0.5mm, circle, fill=white, minimum width=1.5mm, inner sep=1mm, draw=black!80, minimum size=2mm]  (2) at ({144}:5){};
			
			\draw node[line width=0.5mm, circle, fill=white, minimum width=1.5mm, inner sep=1mm, draw=black!80, minimum size=2mm]  (3) at ({216}:5){};

			\draw node[line width=0.5mm, circle, fill=purple, minimum width=1.5mm, inner sep=1mm, draw=black!80, minimum size=2mm]  (4) at ({288}:5){};
			
			\draw node[line width=0.5mm, circle, fill=white, minimum width=1.5mm, inner sep=1mm, draw=black!80, minimum size=2mm]  (5) at ({0}:5){};

			\draw node[line width=0.5mm, circle, fill=white, minimum width=1.5mm, inner sep=1mm, draw=black!80, minimum size=2mm]  (6) at ({108}:5){};
			
			\draw node[line width=0.5mm, circle, fill=purple, minimum width=1.5mm, inner sep=1mm, draw=black!80, minimum size=2mm]  (7) at ({180}:5){};
			
			\draw node[line width=0.5mm, circle, fill=purple, minimum width=1.5mm, inner sep=1mm, draw=black!80, minimum size=2mm]  (8) at ({252}:5){};

			\draw node[line width=0.5mm, circle, fill=purple, minimum width=1.5mm, inner sep=1mm, draw=black!80, minimum size=2mm]  (9) at ({324}:5){};
			
			\draw node[line width=0.5mm, circle, fill=purple, minimum width=1.5mm, inner sep=1mm, draw=black!80, minimum size=2mm]  (10) at ({36}:5){};

			\foreach \i/\j in {1/3, 2/6, 6/1, 5/2}{ 
				
				\draw[-{Latex[width=0mm 5,  length=4mm]} , blue, opacity=0.6, line width=0.4mm] (\i) to (\j);}
			
			\foreach \i/\j in {3/5}{ 
				
				\draw[-{Latex[width=0mm 5,  length=4mm]} , red, opacity=0.6, line width=0.4mm] (\i) to (\j);}

			\draw[purple,  line width=0.4mm, -{Latex[width=0mm 5,  length=4mm]}]  (1) to[out=-36, in=-36, distance=-6cm] (3);
			
			\draw[purple,  line width=0.4mm, -{Latex[width=0mm 5,  length=4mm]}]  (3) to[out=18, in=18, distance=-2cm] (7);	
			
			\draw[purple,  line width=0.4mm, -{Latex[width=0mm 5,  length=4mm]}]  (7) to[out=72, in=72, distance=-6cm] (9);	
			
			\draw[purple,  line width=0.4mm, -{Latex[width=0mm 5,  length=4mm]}]  (9) to[out=96, in=144, distance=-2cm] (4);	
			
			\draw[purple,  line width=0.4mm, -{Latex[width=0mm 5,  length=4mm]}]  (4) to[out=78, in=108, distance=-2cm] (8);	
			
			\draw node[black, line width=1mm, opacity=1.92]  () at (0,-9) {\Huge $w\curve{{\color{red}\vec C'},\phi',sh}=\frac{1}{2}$};

		\end{tikzpicture}
	}
\end{minipage}	
\caption{Continuous extension of signed cycles with shortest conventions and impact of switching}
\label{fig:ExtendingSignedCycles}
\end{figure}

We may then observe that: in the convention of shortest extension the winding number of signed cycle is an integer if and only it is positive. In the convention of clockwise extension, the winding number of a signed cycle is an integer if and only if the number of positive edges is odd.
A key property for either of the two conventions is that they are invariant under switching.

\begin{observation} 
Given a signed cycle and mapping $\phi$ of its vertices to $\mathcal{O}$, in either of the clockwise  or shortest extensions, switching at a vertex $x_i$ does not impact the choice for $\psi(x_i)$. And therefore, does not impact the winding number.
\end{observation}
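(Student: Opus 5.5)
The plan is to compare, step by step, the traversal of the cycle before and after switching, and to show that the curve being drawn is the same. Fix a signed cycle $(C,\sigma)$ with vertices $x_1,\dots,x_k$ in the order given by its direction, and a mapping $\phi$ satisfying the well-definedness conditions. Switching at $x_i$ replaces $\phi$ by $\phi'$ with $\phi'(x_i)=-\phi(x_i)$ and $\phi'=\phi$ elsewhere. It replaces $\sigma$ by $\sigma'$, where the signs of $x_{i-1}x_i$ and $x_ix_{i+1}$ are multiplied by $-1$ and all other signs are unchanged.

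First, I check that $\phi'$ still satisfies the well-definedness hypotheses. The edges $x_{i-1}x_i$ and $x_ix_{i+1}$ change sign, and the image of $x_i$ is replaced by its antipode. So ``ends of a positive edge not antipodal'' becomes exactly ``ends of a negative edge not equal'', and vice versa.

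Next, I encode the rule defining both extensions. Write $\psi(x_j)=\epsilon_j\phi(x_j)$ with $\epsilon_j\in\{\pm1\}$. Both rules are local: given $\psi(x_{j})$, the next point $\psi(x_{j+1})$ is determined by a sign condition of the form $\epsilon_j\epsilon_{j+1}\sigma(x_jx_{j+1})=\pm1$ (a fixed sign for each convention). The geometric travel (clockwise, or the shorter arc) from $\psi(x_j)$ to $\psi(x_{j+1})$ then depends only on these two points of $\mathcal{O}$.

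The main claim is an induction along the cycle, starting from the same base point. If $i\neq1$, start at $\psi(x_1)=\phi(x_1)=\phi'(x_1)$. If $i=1$, start at $\psi'(x_1)=-\phi'(x_1)=\phi(x_1)$, which is allowed since half-integer winding is permitted. In the switched data set $\epsilon'_i=-\epsilon_i$ and $\epsilon'_j=\epsilon_j$ for $j\neq i$. Then every product $\epsilon'_j\epsilon'_{j+1}\sigma'(x_jx_{j+1})$ equals $\epsilon_j\epsilon_{j+1}\sigma(x_jx_{j+1})$. For edges not at $x_i$ nothing changes. For the two edges at $x_i$, there are two sign flips: one in $\epsilon$ and one in $\sigma$. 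Hence the choices $\epsilon'$ satisfy the defining rule for the switched data. Moreover, the actual points agree: $\epsilon'_j\phi'(x_j)=\epsilon_j\phi(x_j)$ for all $j$, including $j=i$, because $(-\epsilon_i)(-\phi(x_i))=\epsilon_i\phi(x_i)$. Thus every $\psi(x_j)$ is unchanged as a point of $\mathcal{O}$. This is what ``does not impact the choice for $\psi(x_i)$'' means.

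Since each edge is extended by a rule that depends only on its two endpoint points in $\mathcal{O}$, the two closed curves coincide arc by arc. In particular they have the same total displacement, and so the same winding number, possibly a half-integer.

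The only delicate step is the base point when the switched vertex is the starting vertex. I handle it by noting that the winding number is computed from total clockwise displacement, which is independent of the starting point, as in the figure for switching.
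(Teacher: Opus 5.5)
Your proof is correct and is the same argument as the paper's: negating $\phi(x_i)$ and negating the signs of the two edges at $x_i$ cancel in the defining sign rule, so every $\psi(x_j)$ is the same point of $\mathcal{O}$ and the extended curves coincide. Your version simply makes this explicit with the $\epsilon_j$ bookkeeping and treats the base point separately. One justification should be tightened: in the case $i=1$, starting at $-\phi'(x_1)$ is legitimate because flipping the starting lift negates every $\epsilon_j$ and replaces each arc by its antipodal image, which has the same clockwise (respectively signed shortest) length; it is not because half-integer winding numbers are permitted.
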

\begin{proof}
	Consider an edge $x_{i-1}x_i$. Let $-x_i$ be the vertex obtained from switching at $x_i$. Recall that for a circular coloring $\phi$, we impose $\phi(-x_i) = -\phi(x_i)$. Also, the sign of the edge $x_{i-1}x_i$ is switched. Those two sign changes cancel out the impact in the choice of $\psi(x_i)$. See \Cref{fig:ExtendingSignedCycles}.
\end{proof}

\begin{observation}\label{obs:Vec-Cev}
	Given a (signed) cycle we have $w\curve{\vec C,\phi,sh}=-w\curve{\cev C,\phi,sh}$.
\end{observation}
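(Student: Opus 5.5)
We show that reversing the direction of the cycle reverses the shortest-extension curve as a parametrized path. Since winding number changes sign under reversal of parametrization, this gives the identity. We work edge by edge with the shortest extension, keeping track of the representatives $\psi(x_i)\in\{\phi(x_i),-\phi(x_i)\}$ chosen along the way.

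First, fix $\vec C = x_1x_2\cdots x_kx_1$ and start the shortest extension at $\psi(x_1)=\phi(x_1)$. The rule for choosing $\psi(x_{i+1})$ from $\psi(x_i)$ depends only on the sign of $x_ix_{i+1}$: the product of the signs of $\psi(x_i)$, $\psi(x_{i+1})$ and $\sigma(x_ix_{i+1})$ must be positive. This rule is symmetric in the two endpoints. So whenever $\psi(x_{i+1})$ is the endpoint reached from $\psi(x_i)$, the point $\psi(x_i)$ is the endpoint reached from $\psi(x_{i+1})$ when the edge is traversed backwards. Moreover, the shorter arc between two non-antipodal points is the same set regardless of the direction of travel. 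It is well defined because the standing assumptions exclude antipodal ends for positive edges and equal ends for negative edges.

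Next, traverse $\cev C = x_1x_kx_{k-1}\cdots x_1$. If the walk along $\vec C$ returns to $\phi(x_1)$, the traversal of $\cev C$ starting at $\phi(x_1)$ uses exactly the same points $\psi(x_i)$ and the same arcs, in reverse order. The curve is therefore the reversal of the first one, and its total signed angular displacement is the negative of the first.

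If instead the walk ends at $-\phi(x_1)$, which happens exactly when the cycle is negative, we start $\cev C$ at $\phi(x_1)$ and apply the antipodal map $x\mapsto -x$. This map preserves the orientation of $\mathcal O$, preserves arc lengths, and respects the selection rule, since negating both endpoints leaves the sign product unchanged. Hence the backward traversal from $\phi(x_1)$ is the antipodal image of the reversal of the forward curve. It again has signed displacement equal to the negative of the forward one.

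Finally, the winding number is the signed displacement divided by the circumference, possibly a half-integer, so the two values are negatives of each other. The one point needing care is this second, negative case. There the two starting points are antipodal, and we must invoke invariance of displacement under the rotation by half a turn. Equivalently, we can use the earlier observation that the winding number does not depend on the starting representative.
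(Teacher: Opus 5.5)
Your proof is correct. It is essentially the reasoning the paper leaves implicit (the observation is stated without proof): reversing the cycle traverses each shortest arc backwards, and in the negative case the half-turn $x\mapsto -x$ carries the reversed forward curve onto the backward one. A slightly shorter packaging is to note that the signed length $\ell_s$ of the shortest arc of a directed edge does not depend on which representative $\pm\phi(x_i)$ you start from, so the identity follows by summing $\ell_s(x_{i+1}\to x_i)=-\ell_s(x_i\to x_{i+1})$ over the edges.
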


If $\phi$ is a circular coloring of the cycle $C$, then there is a strong connection between the two conventions of clockwise extension and shortest extension. Assume starting at the point $\phi(x_i)$ in the clockwise extension, the edge $x_ix_{i+1}$ maps to an arc of length $\ell_c$. And in the shortest convention it maps to an arc of algebraic length $\ell_s$, meaning that if the shortest arc travels counterclockwise, then $\ell_s$ is a negative number. Note that the value of $\psi(x_{i+1})$ will be different between the two extensions. Then it can be readily verified that for any arc: $\ell_c=\ell_s + \frac r2 $ (see \Cref{fig:signedClockShortExtension}). Summing this formula over all edges of $C$, we have the following correlation between the two extensions.     

\begin{proposition}
	For a cycle $C$ and a circular coloring $\phi$, $w\curve{C,\phi,\clock} = w\curve{C,\phi,sh} + \frac{|C|}{2}$.
\end{proposition}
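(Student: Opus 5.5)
The identity will follow by summing a per-edge relation along the cycle, so the plan is to set up both extensions edge by edge and compare them. Fix the direction $x_1x_2\cdots x_k x_1$ of $C$ and the circle $\mathcal{O}_r$. For each extension I will track the total algebraic distance traveled: clockwise travel counts positively, counterclockwise negatively. The winding number is this total divided by $r$. Since the starting point $\phi(x_1)$ may be revisited as $\pm\phi(x_1)$, the total is a multiple of $r/2$, which matches the convention that half-integer winding numbers are allowed. So it suffices to show that the total travel in the clockwise extension exceeds that of the shortest extension by exactly $|C|\cdot\frac r2$.

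The key step is the per-edge identity $\ell_c=\ell_s+\frac r2$. Here $\ell_c$ and $\ell_s$ are the lengths assigned to the edge $x_ix_{i+1}$ when both extensions start from the same point $p\in\{\pm\phi(x_i)\}$.

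\begin{itemize}
\item Suppose first that the edge is negative. Then the shortest extension goes to the endpoint $q\in\{\pm\phi(x_{i+1})\}$ whose sign agrees with that of $p$. The clockwise extension goes to $-q$.
\item If the edge is positive, the roles of $q$ and $-q$ are exchanged.
\item In either case, the clockwise endpoint is the antipode $q'$ of the shortest endpoint $q$. The circular coloring condition guarantees that $q\neq -p$ is avoided in the relevant sense, so the shortest arc from $p$ to $q$ is well defined with signed length $\ell_s\in(-\frac r2,\frac r2)$.
\item The clockwise distance from $p$ to $-q$ is then $\ell_s+\frac r2$, which lies in $(0,r)$. This is precisely the clockwise travel.
\end{itemize}

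The coloring condition also ensures that $p\neq -q$, so this clockwise distance is nonzero and lies strictly inside $(0,r)$.

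The main obstacle is that the two extensions do not end at the same point after an edge: $\psi_c(x_{i+1})=-\psi_s(x_{i+1})$. So the per-edge identity cannot be applied naively along the whole walk. I would handle this with switching invariance. The clockwise extension run from $p$ and from $-p$ produces the same edge lengths, because the rule depends only on the product of signs and the whole configuration is rotated by $r/2$. By induction on $i$, the length of the $i$-th edge in the clockwise extension therefore equals the clockwise length computed from the shortest extension's current point $\psi_s(x_i)$. That length is $\ell_s^{(i)}+\frac r2$.

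Summing over the $|C|$ edges gives
\[
\sum_i \ell_c^{(i)}=\sum_i \ell_s^{(i)}+|C|\frac r2 .
\]
Dividing by $r$ yields $w\curve{C,\phi,\clock}=w\curve{C,\phi,sh}+\frac{|C|}{2}$, with both winding numbers read with respect to the same chosen direction of $C$.
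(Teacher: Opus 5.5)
Your argument follows the paper's route: prove the per-edge relation $\ell_c=\ell_s+\frac r2$ and sum it over the edges of $C$. On one point you are more careful than the paper. The paper notes that $\psi(x_{i+1})$ differs between the two extensions, but then measures both arcs from $\phi(x_i)$ and calls the relation readily verified. Your observation that an edge's clockwise length is unchanged when its starting point is replaced by its antipode (the whole picture rotates by $\frac r2$) is exactly what makes the summation legitimate.

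There is one slip to fix: your sign rules are reversed relative to this paper's convention, which swaps the roles of positive and negative edges compared with the original definition. Here the rules are as follows.
\begin{itemize}
\item Along a negative edge, the shortest extension goes from $p$ to the endpoint of \emph{opposite} sign, and the clockwise extension goes to the one of the same sign.
\item Along a positive edge it is the other way round.
\end{itemize}

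This matters in the step you left vague (``$q\neq -p$ is avoided in the relevant sense''). The coloring condition is $d(\phi(x_i),\phi(x_{i+1}))\ge 1$ on a negative edge and $d(\phi(x_i),-\phi(x_{i+1}))\ge 1$ on a positive edge. With the correct pairing, this says exactly that $|\ell_s|\le \frac r2-1$. So the shortest arc is well defined, and $\ell_s+\frac r2\in[1,r-1]$ is the clockwise distance from $p$ to $-q$.

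With your pairing, take a negative edge whose ends are antipodal, which is permitted here as soon as $r\ge 2$. The shortest arc would then be ambiguous and the clockwise arc degenerate. The rest of your argument only uses that the two extensions choose antipodal endpoints, so the conclusion stands once this is corrected.
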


In particular if $C$ is a positive directed $2$-cycle, then $w\curve{C,\phi,sh}=0$. This is a key fact when applying winding number to circular coloring. The other key facts that help to prove lower bound on the circular chromatic number of graphs using the winding numbers of a set its cycles are the following. 

\begin{proposition}\label{prop:forced-winding}
	Let $C$ be a signed $k$-cycle and let $\phi$ be a circular $r$-coloring of $C$. 
	\begin{itemize}
	\item If $C$ is positive and $r<\frac{2k}{k-2}$, then $w\curve{\vec C,\phi,sh} = 0$.
	\item If $C$ is negative, then $r\geq \frac{2k}{k-1}$. Moreover, if $r < \frac{2k}{k-3}$, then $|w\curve{\vec C,\phi,sh}| = \frac12$.
	\item If $C$ is negative and $r<\frac{2k-2}{k-2}$, then in the shortest extension, all edges travel in the same direction a nonzero length.  
	\end{itemize}
\end{proposition}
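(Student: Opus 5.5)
The plan is to work in the shortest extension and track the algebraic lengths $\ell_s(e)$ of the individual edges. Switching does not change the winding number, so we may normalize $\phi$ along the walk: starting at $\psi(x_1)=\phi(x_1)$, each edge $x_ix_{i+1}$ contributes a signed arc of algebraic length $\ell_s(e_i)\in(-\frac r2,\frac r2)$.

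The key constraint comes from the definition of a circular $r$-coloring. A negative edge requires $\phi(x_i)$ and $\phi(x_{i+1})$ to be at distance at least $1$. A positive edge requires $\phi(x_i)$ and $-\phi(x_{i+1})$ to be at distance at least $1$. In both cases, with the choice of $\psi(x_{i+1})$ made by the shortest convention, this gives
\[
1\le |\ell_s(e_i)|\le \frac r2-1 .
\]
The upper bound holds because the other endpoint is at distance at least $1$ from the antipode $-\psi(x_i)$.

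The winding number satisfies $r\,w\curve{\vec C,\phi,sh}=\sum_i \ell_s(e_i)$. Moreover, $w$ is an integer when $C$ is positive and lies in $\mathbb Z+\frac12$ when $C$ is negative, by the observation preceding the statement. Hence $|w|\le \frac{k}{r}(\frac r2-1)=\frac k2-\frac kr$. Each bullet follows from this bound:
\begin{itemize}
\item For positive $C$, $r<\frac{2k}{k-2}$ means $\frac kr>\frac{k-2}{2}$, so $|w|<1$. Integrality then forces $w=0$.
\item For negative $C$, $|w|\ge\frac12$ gives $\frac12\le\frac k2-\frac kr$, which rearranges to $r\ge\frac{2k}{k-1}$.
\item For negative $C$ with $r<\frac{2k}{k-3}$, we get $|w|<\frac32$. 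Since $w\in\mathbb Z+\frac12$, this forces $|w|=\frac12$.
\end{itemize}

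The third bullet is the step I expect to need the most care. First, every edge already travels a nonzero length, since $|\ell_s|\ge1$. Suppose, for contradiction, that the directions are not all equal: some edges have $\ell_s>0$ and some have $\ell_s<0$. Then each forward edge contributes at most $\frac r2-1$ and each backward edge subtracts at least $1$. Let $p$ and $q$ be the numbers of forward and backward edges, with $p,q\ge1$ and $p+q=k$. We need to show
\[
\Bigl|\sum_i \ell_s(e_i)\Bigr|\le \max\{p,q\}\Bigl(\frac r2-1\Bigr)-\min\{p,q\}\cdot 1 < \frac r2 .
\]
By the worst case, this reduces to $(k-1)(\frac r2-1)-1<\frac r2$, i.e.\ $(k-2)\frac r2<k$, i.e.\ $r<\frac{2k}{k-2}$. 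That condition is implied by $r<\frac{2k-2}{k-2}$, and with it we get $|w|<\frac12$, contradicting $w\in\mathbb Z+\frac12$.

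Since this margin seems loose relative to the stated hypothesis $\frac{2k-2}{k-2}$, I would double-check the bound on mixed-sign sums. The plan is to maximize $\sum\ell_s$ subject to $1\le|\ell_s|\le\frac r2-1$ with at least one negative term. If my inequality is correct, the stated threshold holds with room to spare, and I would present this extremal computation as the core of the third bullet.
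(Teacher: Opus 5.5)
\textbf{The first two bullets are correct; the third has a genuine gap.}

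Your first two bullets are correct and essentially follow the paper's argument. They use only $|\ell_s(e)|\le\frac r2-1$ for every edge, averaging over the $k$ edges, and the (half-)integrality of $w$.

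The third bullet, however, rests on the claimed lower bound $|\ell_s(e)|\ge 1$, and that bound is false. In the shortest convention, with $\psi(x)=\phi(x)$, a positive edge $xy$ is extended from $\phi(x)$ to $\phi(y)$ and a negative one from $\phi(x)$ to $-\phi(y)$. The coloring condition says that the \emph{other} point ($-\phi(y)$, respectively $\phi(y)$) is at distance at least $1$ from $\phi(x)$. This gives only $0\le|\ell_s(e)|\le\frac r2-1$. The lower bound $1$ belongs to the clockwise convention, where $\ell_c=\ell_s+\frac r2\in[1,r-1]$. For example, a positive edge with $\phi(x)=\phi(y)$ has $\ell_s=0$. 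Also, for $r<4$ your interval $[1,\frac r2-1]$ would be empty, although the all-negative triangle admits a circular $3$-coloring. So ``every edge already travels a nonzero length'' is not automatic; it is half of what the third bullet asserts. Likewise, in your mixed-sign estimate a non-forward edge contributes at most $0$, not at most $-1$.

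This is not cosmetic: the stronger threshold $\frac{2k}{k-2}$ you derive is wrong. Take the negative triangle $x_1x_2x_3$ with $x_1x_2$ and $x_2x_3$ positive and $x_3x_1$ negative. Let $r=4$, $\phi(x_1)=\phi(x_2)=0$ and $\phi(x_3)=1$. This is a circular $4$-coloring, and the shortest extension travels lengths $0,1,1$, so $w=\frac12$. Yet the edge $x_1x_2$ travels length $0$, although $4<\frac{2k}{k-2}=6$.

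The correct estimate is the paper's. Suppose the $\ell_s(e_i)$ do not all have the same strict sign. Then some edge has $\ell_s\le 0$ and some edge has $\ell_s\ge 0$. Hence both the positive and the negative part of the sum come from at most $k-1$ edges, and $\bigl|\sum_i\ell_s(e_i)\bigr|\le (k-1)(\frac r2-1)$. The inequality $(k-1)(\frac r2-1)<\frac r2$ is equivalent to $r<\frac{2k-2}{k-2}$. Under that hypothesis $|w|<\frac12$, contradicting $w\in\mathbb Z+\frac12$.

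So the hypothesis $\frac{2k-2}{k-2}$ is exactly what the corrected bound yields, and the example above shows it is sharp for $k=3$. Your suspicion that the margin looked too loose was the right warning sign.
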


\begin{proof}
	
	For the first item, recall that in the extension by the shortest convention, a positive edge $x_ix_{i+1}$ is extended to the shorter side of $\mathcal{O}$ partitioned by either $\phi(x_i),\phi(x_{i+1})$ or $-\phi(x_i),-\phi(x_{i+1})$. If $x_ix_{i+1}$ is negative, then it is mapped to the shorter side of $\mathcal{O}$ partitioned by either $\phi(x_i),-\phi(x_{i+1})$ or $-\phi(x_i),\phi(x_{i+1})$. The latter is the same as mapping the positive edge $x_ix_{i+1}$ after a switching at $x_{i+1}$. By the condition of circular $r$-coloring on the positive edges, each of these edges is traveled a length of at most $\frac{r}{2} -1$. But if $|w\curve{\vec C,\phi,sh}| \geq 1$, then at least one of the edges has traveled a length of at least $\frac{r}{k}$. Thus under this condition we have $\frac{r}{2}-1 \leq \frac{r}{k}$, or equivalently $r\geq \frac{2k}{k-2}$.
	
	For the second item, we apply the same conditions on the edges, but we note that the winding number of a negative cycle under the shortest convention is an integer plus a half, thus its absolute value is at least $\frac{1}{2}$. Therefore we have $\frac{r}{2}-1 \geq \frac{r}{2k}$, or equivalently $r\leq \frac{2k}{k-1}$. For the moreover part, if $|w\curve{\vec C,\phi,sh}| \neq \frac12$, then $|w\curve{\vec C,\phi,sh}| \geq \frac32$ and by the same argument we have $\frac{r}{2}-1 \geq \frac{3r}{2k}$ which means $r \geq \frac{2k}{k-3}$.

	For the third item,	the case of $k=2$, that is when $C$ is a negative $2$-cycle, the statement can be readily verified. For $k\geq 3$ we have $\frac{2k-2}{k-2} \leq \frac{2k}{k-1}$, hence by the second item, we have $|w\curve{\vec C,\phi,sh}| = \frac12$. By symmetry we assume  $w\curve{\vec C,\phi,sh} = +\frac12$. Thus, the claim to be proven is that, in the shortest extension, each edge travels a nonzero length in the clockwise direction. Assume to the contrary, that an edge is either mapped to a single point or traverses a counterclockwise direction. But then even if each of the other edges traverses the maximum allowed clockwise distance of $\frac{r}{2}-1<\frac{r}{2(k-1)}$, the total travel length will be less than $\frac{r}{2}$.
\end{proof}

We shall note that in these statements we allow  the denominator to be $0$, in which case the  corresponding conclusion holds for each value of $r$.  

\subsection{Positive cycles}

\begin{theorem}\label{thm:setOfEvenCyclesSigned}
		Let $\mathcal{C}$ be a set of positive cycles of a signed graph $\widehat G$ each of length at most $k$, where each edge is in an even number of cycles of $\mathcal{C}$. If for some assignment $D$ of directions to the cycles in $\mathcal{C}$, the sum of the  $(\mathcal{C}, D)$-degrees of the negative edges is $2 \pmod 4$, then $\chi_c(\widehat G) \geq \frac{2k}{k-2}$. 
	\end{theorem}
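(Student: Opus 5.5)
Assume for contradiction that $\widehat G=(G,\sigma)$ has a circular $r$-coloring $\phi$ with $r<\frac{2k}{k-2}$. The plan mirrors the proof of \Cref{thm:setOfEvenCycles}, with the clockwise extension replaced by the shortest extension and parity counting replaced by summing winding numbers. First check that the shortest extension is well defined. A circular $r$-coloring never maps the ends of a positive edge to antipodal points, and never maps the ends of a negative edge to the same point. Moreover, each edge in the shortest convention travels an algebraic length of absolute value at most $\frac r2-1<\frac r2$, so its traversal is determined by $\phi$ and the direction in which the edge is traversed. Reversing the direction of an edge negates its algebraic length. Since $\phi$ is a circular $r$-coloring of each cycle, the first item of \Cref{prop:forced-winding} gives $w\curve{\vec C,\phi,sh}=0$ for every $C\in\mathcal{C}$ with the direction assigned by $D$.

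Next build $G_1$ as the edge-disjoint union of the directed cycles of $(\mathcal{C},D)$, and $G_2$ by adding, for each edge $xy$, the minimum number of parallel copies of the same sign so that $x\to y$ and $y\to x$ occur equally often. As in \Cref{thm:setOfEvenCycles}, each edge is in an even number of cycles, so its degree is even, and the copies added for it come in identically directed pairs. Assign to each directed arc $e$ of $G_2$ its algebraic shortest length $\ell(e)$. The sum of $\ell$ over $G_1$ equals $r$ times the total winding number, which is $0$ by the previous step. The arcs of $G_2$ pair into opposite arcs $x\to y$, $y\to x$, whose lengths cancel, so the sum of $\ell$ over $G_2$ is also $0$. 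Hence the sum over the added arcs must vanish. These arcs form pairs of identically directed copies, so the added total is $2\sum_{e}\delta(e)\,\ell(e)/2\cdot$ (one term per pair) — more precisely $2\sum \ell(e)$ taken over one representative of each added pair.

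The parity content comes from the interplay between exact lengths and the signs of edges, and this is the step I expect to be the main obstacle. Real lengths carry no parity, so I will reduce modulo $\frac r2$. For a positive edge, $\ell\equiv \phi(y)-\phi(x)$, and for a negative edge, $\ell\equiv \phi(y)-\phi(x)+\frac r2 \pmod r$. Along a path that is not closed these contributions do not telescope, so instead I would work with the added pairs themselves. A pair of identically directed negative copies contributes $2(\phi(y)-\phi(x))+r$. A pair of identically directed positive copies contributes $2(\phi(y)-\phi(x))$. Each added set of arcs forms closed walks, because added arcs $=$ ($G_2$ minus $G_1$) and both are Eulerian in the balanced sense. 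Summing over them, the $\phi$-differences telescope to $0$ modulo $r$ times an integer, namely the winding of the doubled added system. The remaining term is $r$ times (number of added negative pairs), and by hypothesis this number is odd, since it is half the negative degree sum, which is $2\pmod 4$.

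So the added total equals $r\cdot(\text{odd}) + 2r\cdot m$ for some integer $m$ coming from the doubled winding, and this is nonzero, contradicting the vanishing found above. The delicate point is justifying that the $\phi$-difference part equals $2r\cdot m$ with the factor $2$. I would do this by noting that each pair doubles a single traversal, so the telescoped sum is twice the winding of a closed curve, which is an integer multiple of $r$. Thus the $\phi$-part is a multiple of $2r$ and the contradiction holds. Alternatively, I would switch to the clockwise extension, using $\ell_c=\ell_s+\frac r2$, and count green arcs exactly as in \Cref{thm:setOfEvenCycles} if this bookkeeping becomes cleaner.
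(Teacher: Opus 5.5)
Your argument is correct and is essentially the paper's proof; only the bookkeeping differs. The paper deletes the opposite $2$-cycles from $G_1$ (shortest winding $0$) and splits what remains into pairs of identical cycles, each negative pair contributing an odd integer to the total winding, whereas you add reversed arcs as in \Cref{thm:setOfEvenCycles} and get the same parity from a lift computation rather than a cycle decomposition. The factor $2$ you flag is automatic: fix real lifts $\tilde\phi(v)$ and write $\ell(e)=\tilde\phi(y)-\tilde\phi(x)+\epsilon_e\frac r2+rm_e$, with $m_e\in\mathbb{Z}$ and $\epsilon_e=1$ exactly when $e$ is negative; summing $2\ell(e)$ over one representative of each added pair then gives $\sum_{e\in A}\bigl(\tilde\phi(y)-\tilde\phi(x)\bigr)+r\,n_-+2r\sum m_e=r\bigl(n_-+2\sum m_e\bigr)\neq 0$, because the added arc set $A$ is balanced and the number $n_-$ of negative pairs is odd.
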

\begin{proof}
	We consider the oriented signed multigraph $G_1$, obtained from $\mathcal{C}$ by taking the edge disjoint union of the (directed) signed cycles, but allowing the same vertices to be identified. As a sum of (directed) cycles, observe that $G_1$ is eulerian. Let $\mathcal{C}'$ be a maximal set of edge disjoint directed $2$-cycles in $G_1$, and let $G'_1$ be the subgraph of $G_1$ obtained by removing the edges of all 2-cycles in $\mathcal{C}'$.

	Since every edge appears in an even number of cycles of $\cal C$, in $G'_1$ each arc is in a class of an even number of parallel arcs. As $G'_1$ is still Eulerian, its edges can be decomposed into a set $\mathcal{C}''$ of edge-disjoint (directed) cycles. In process of this decomposition, each time a cycle is selected, we also select a copy of it, noting that, after removing the two, what remains satisfies the conditions of being eulerian and each edge being in a parallel class of an even size.

	Let $\phi$ be a $r$-circular coloring of $\widehat G$. Towards a contradiction, suppose $r < \frac{2k}{k-2}$, by \Cref{prop:forced-winding}, for every cycle of length at most $k$, $w\curve{\vec C, \phi, sh} = 0$. Thus 
	\begin{align*}
	0 &= \sum_{\vec C\in \cal C}w \curve{\vec C, \phi, sh} \\
	&= \sum_{\vec C\in  \cal C' \cup \cal C''}w \curve{\vec C, \phi, sh}\\
	&= \sum_{\vec C\in \cup \cal C''}w \curve{\vec C, \phi, sh}  \quad \text{because } w \curve{\vec C_2, \phi, sh} = 0\\
	&= 	\sum_{\substack{ \vec C\in \cal C''\\ C \text{ positive}}} w \curve{\vec C, \phi, sh} + 
	\sum_{\substack{ \vec C\in \cal C''\\ C \text{ negative}}}w \curve{\vec C, \phi, sh} &(*)
	\end{align*}
	
	Let's $C$ be a cycle of $\cal C''$. 
	\begin{itemize}
	
	\item If $C$ is positive, then $w\curve{C,\phi,sh} = \ell$ with $\ell$ being an integer. Since $C$ is taken twice, this pair contributes $2\ell \equiv 0 \mod 2$ in the sum $(*)$. And because it has an even number of negative edges, the pair contributes $0 \mod 4$ to the sum of the $(\mathcal{C}, D)$-degrees of negative edges.

	\item If $C$ is negative, then $w\curve{C,\phi,sh} = \ell + \frac{1}{2}$ with $\ell$ being an integer. Since $C$ is taken twice, it contributes $2\ell + 1 \equiv 1 \mod 2$ in the sum $(*)$. And because it has an odd number of negative edges, the pair contributes $2 \mod 4$ to the sum of the the $(\mathcal{C}, D)$-degrees of negative edges.
	\end{itemize}
	
	From the winding number, we deduce that there must be an even number of pairs of negative cycles in $\cal C''$. However, because the sum of the $(\mathcal{C}, D)$-degrees of the negative edges being $2 \mod 4$, we need an odd number of pairs of negative cycles. Thus we reach a contradiction. 
	\end{proof}

\subsection{Negative cycles}

Given a signed graph $\widehat{G}$, a set $\cal C$ of its cycles, and directions $D$ on the cycles in $\cal C$, the dual of the system is defined in the same way as in the graph case. Thus the signs of the edges in the dual are only function of the orientations assigned by $D$.

 \begin{theorem}
 	Let $\widehat G$ be a graph and $\mathcal{C}$ be a set of negative $k$-cycles of $G$. If $\widehat S(\cal C)$ is not balanced, then $\chi_c(\widehat G)\geq \frac{2k-2}{k-2}$. 
 \end{theorem}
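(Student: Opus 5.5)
We mirror the proof of \Cref{thm:balanced-dual}, replacing \Cref{prop:2k+1-cycle-II} and \Cref{cor:OddCyclesCommonEdge} by the third item of \Cref{prop:forced-winding}. Suppose, towards a contradiction, that $\widehat G$ admits a circular $r$-coloring $\phi$ with $r<\frac{2k-2}{k-2}$. For each cycle $C\in\mathcal C$ we choose the direction $D_C$ as follows. Since $C$ is a negative $k$-cycle and $r<\frac{2k-2}{k-2}$, the third item of \Cref{prop:forced-winding} says that in the shortest extension all edges of $C$ travel a nonzero length in the same direction. Then \Cref{obs:Vec-Cev} lets us pick $D_C$ so that this common direction is clockwise, i.e.\ $w\curve{\vec C,\phi,sh}=+\frac12$ and every edge of $\vec C$ is traversed clockwise by a positive amount. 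Set $D=\{D_C : C\in\mathcal C\}$.

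The key step is the analogue of \Cref{cor:OddCyclesCommonEdge}: if $C_1,C_2\in\mathcal C$ share an edge $uv$, then $uv$ is oriented the same way under $D_{C_1}$ and $D_{C_2}$. Suppose instead that $C_1$ traverses $u\to v$ and $C_2$ traverses $v\to u$. The shortest extension of a single edge depends only on $\phi(u),\phi(v)$, the sign of $uv$, and the starting lift $\psi(u)\in\{\pm\phi(u)\}$. Changing the starting lift applies the antipodal map to the arc, which preserves its algebraic length. So the algebraic length $\ell_s$ of the shortest arc for $u\to v$ is intrinsic to the edge, and traversing it as $v\to u$ gives $-\ell_s$. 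This uses that switching invariance lets us work with $\pm\phi$ consistently.

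Since $uv$ travels a nonzero clockwise length in $\vec C_1$, we have $\ell_s>0$. But $uv$ must also travel a nonzero clockwise length in $\vec C_2$, which gives $-\ell_s>0$, a contradiction. This step is the main obstacle. One must check that ``shortest'' is unambiguous, i.e.\ that no edge has ends at an exactly antipodal lift. That holds because a circular $r$-coloring with $r<4$ keeps every edge's shortest arc of length at most $\frac r2-1<\frac r2$.

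Hence, under $D$, every edge of the dual $S(\mathcal C,D)$ is positive, so $S(\mathcal C,D)$ is balanced. By \Cref{obs:switch_dual}, any other choice of directions gives a switching-equivalent signed graph, so $\widehat S(\mathcal C)$ is balanced, contradicting the hypothesis. Therefore $\chi_c(\widehat G)\geq\frac{2k-2}{k-2}$. Note that for $k=2$ the bound is vacuous in the paper's convention, since the denominator is $0$.
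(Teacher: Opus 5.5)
Your proof is correct and follows the paper's argument. You assume a circular $r$-coloring with $r<\frac{2k-2}{k-2}$, then use the third item of \Cref{prop:forced-winding} and \Cref{obs:Vec-Cev} to direct each cycle so that every edge moves a positive clockwise amount in the shortest extension. You then deduce that cycles sharing an edge orient it alike, so $S(\mathcal C,D)$ is all positive, and finish with \Cref{obs:switch_dual}.

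Your observation that the signed length of an edge's shortest arc does not depend on the chosen lift, and changes sign under reversal, makes explicit what the paper compresses into ``the orientation is forced.'' One minor aside: for $k=2$ it is the hypothesis, not the bound, that is vacuous, since any set of negative $2$-cycles has a balanced dual.
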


 \begin{proof}
Suppose that $\chi_c(\widehat G) < \frac{2k-2}{k-2}$. By the third item of \Cref{prop:forced-winding} we have $w\curve{\vec C,\phi,sh} \in \{-\frac{1}{2}, +\frac{1}{2}\}$.
By \Cref{obs:Vec-Cev} we may choose a direction for each cycle such that the corresponding winding number is $+\frac{1}{2}$. Again by the third item of \Cref{prop:forced-winding} each edge travels a non-zero length in the clockwise direction. In particular, the orientation is forced, so all cycles must agree on the orientation of their common edges. This is translated in $\widehat S(\cal C, D)$ to all edges being positive. So $\widehat S(\cal C)$ is balanced.
 \end{proof}

\subsection{Signed graphs on surfaces}

In applying the result to a graph embedded on a surface we normally take a set $\mathcal{C}$ of cycles which consists of all facial cycles plus a few noncontractible cycles. The directions associated to these cycles will be the directions through which noncontractible cycles are cut to form a planar embedding after which each face will be assigned a clockwise direction. We omit details of the proofs when they are straightforward.

\begin{theorem}\label{thm:SignProjectivePlanar-positive}
	Let $\widehat{G}$ be a signed graph embedded on the projective plane such that each face is a positive cycle of length $k$ ($k\geq 3$). Then either $\widehat{G}$ is balanced, or $\chi_c(G) \geq \frac{2k}{k-2}$.   
\end{theorem}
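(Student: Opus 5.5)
We will reduce the statement to \Cref{thm:setOfEvenCyclesSigned}, imitating the proof of the unsigned projective-planar result for even faces. Assume $\widehat G$ is not balanced. All faces are positive, and the facial cycles together with one noncontractible cycle generate the cycle space. So an unbalanced $\widehat G$ must contain a negative cycle $C$, and this cycle is necessarily noncontractible: a contractible cycle is the binary sum of the faces it bounds, hence positive. Cutting the projective plane along $C$ produces a disk, namely a plane graph $G/C$ whose outer boundary traverses every edge of $C$ twice. Let $\mathcal C$ be the set of facial cycles of $G$, which are the bounded faces of $G/C$. Let $D$ assign to each of them the clockwise direction in this planar drawing.

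Next we compute the $(\mathcal C,D)$-degrees. An edge not on $C$ lies in two bounded faces of $G/C$ and is traversed in opposite directions, so its degree is $0$. An edge of $C$ appears twice on the outer boundary, and the two copies are identified with the same orientation under the cross-cap gluing; hence its two faces traverse it in the same direction and its degree is $\pm 2$. In every case each edge lies in exactly two cycles of $\mathcal C$, which is even. The sum of the degrees of the negative edges is therefore $2$ times the number of negative edges of $C$. Since $C$ is negative, this number is odd, so the sum is $2 \pmod 4$. Every cycle in $\mathcal C$ is a positive cycle of length $k$, so \Cref{thm:setOfEvenCyclesSigned} gives $\chi_c(\widehat G)\ge \frac{2k}{k-2}$.

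The main obstacle is making the cutting argument rigorous. Two facts need care. First, the two copies of each edge of $C$ on the boundary of the disk really are traversed in the same direction by the clockwise orientations of the adjacent faces, which is the defining feature of the cross-cap (the antipodal identification of the boundary). Second, a facial walk may fail to be a cycle, or a face may contain an edge of $C$ twice. For the second issue, the hypothesis that each face is a $k$-cycle rules out repeated edges within a face. Degenerate situations, such as an edge of $C$ lying in a single face that meets it from both sides, still contribute degree $\pm 2$ and an even count, so the parity computation survives. The remaining statement, that a contractible cycle is positive, follows from $\mathbb Z_2$-homology of the projective plane: a contractible cycle bounds a disk and is the sum of the faces inside.
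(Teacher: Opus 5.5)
Your proof is correct and takes the same route the paper intends; the paper omits the details as straightforward. You cut along a negative, hence noncontractible, cycle $C$, orient the faces of the resulting disk clockwise so that only edges of $C$ receive degree $\pm 2$, and apply \Cref{thm:setOfEvenCyclesSigned}, exactly as in the paper's proof of the unsigned even-face projective-plane theorem, with the odd number of negative edges of $C$ playing the role of the odd length of $C$.
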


\begin{corollary}\label{cor:triangulation}
	If $\widehat{G}$ is a signed triangulation of the projective plane where each facial triangle is positive, then either $\chi_c(\widehat{G})=2$ (when $\widehat{G}$ is balanced) or $\chi_c(\widehat{G})=6$. 
\end{corollary}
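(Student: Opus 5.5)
The plan is to specialize \Cref{thm:SignProjectivePlanar-positive} to $k=3$, which gives the lower bound $\frac{2k}{k-2}=6$ for every unbalanced $\widehat G$, and then to settle the two remaining pieces: the balanced case equals $2$, and in the unbalanced case $6$ is also an upper bound.

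\emph{Balanced case.} First, $\chi_c(\widehat G)\geq 2$ always holds, since a negative edge forces its two ends at distance at least $1$ on a circle of circumference $r$, so $r\ge 2$. For the matching upper bound, assume $\widehat G$ is balanced. After switching we may take all edges to be positive. A positive edge only requires $\phi(x)$ to be at distance at least $1$ from $-\phi(y)$. Mapping every vertex to one point $p$ of $\mathcal{O}_2$ puts $\phi(x)$ at distance $1$ from $-\phi(y)=-p$, so this is a circular $2$-coloring. Switching invariance, obtained by replacing $\phi(v)$ with $-\phi(v)$ at each switched vertex, transfers the coloring back to the original signature. Hence $\chi_c=2$.

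\emph{Unbalanced case, lower bound.} If $\widehat G$ is not balanced, \Cref{thm:SignProjectivePlanar-positive} with $k=3$ gives $\chi_c(\widehat G)\geq 6$.

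\emph{Unbalanced case, upper bound.} The plan is a general bound $\chi_c(\widehat G)\le 6$ for every signed simple graph, using $\chi_b=\lceil \chi_c/2\rceil$ in the direction that a balanced $3$-partition yields a circular $6$-coloring. To build it, partition $V$ into three balanced sets $V_1,V_2,V_3$. Switch within each $V_i$ so that all of its internal edges are positive. Then map $V_i$ to the point $2i$ on $\mathcal{O}_6$ (that is, $0$, $2$, $4$ up to rotation), and check the conditions edge by edge.

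An internal positive edge has both ends at $p$, and $-p$ lies at distance $3$ from $p$, so it is fine. For an edge between $V_i$ and $V_j$, the points $p$ and $q$ are at distance $2$, and $q$ and $-q$ are also at distance $\ge 1$ from $p$. This is because $\{0,2,4\}$ together with its antipodes $\{3,5,1\}$ are the six points at mutual distance $\ge 1$, and $p$ never coincides with $q$ or $-q$. So both positive and negative cross edges are satisfied.

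The main obstacle is the existence of such a balanced $3$-partition, i.e.\ $\chi_b(\widehat G)\le 3$ for these projective planar signed graphs. I would get it from $\chi_b\le\lceil\chi(G)/2\rceil$, since each color class is independent and hence balanced. For projective planar graphs $\chi\le 6$ (Heawood; $K_6$ embeds), which gives $\lceil 6/2\rceil=3$. Combining the lower bound $6$ with this upper bound $6$ yields $\chi_c(\widehat G)=6$ in the unbalanced case.
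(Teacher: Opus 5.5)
Your lower bound (the case $k=3$ of \Cref{thm:SignProjectivePlanar-positive}) is correct, and so is your treatment of the balanced case. Your conversion of a balanced $3$-partition into a circular $6$-coloring, using the points $0,2,4$ on $\mathcal{O}_6$, also checks out.

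The gap is in producing that balanced $3$-partition. The inequality $\chi_b(G,\sigma)\le\lceil\chi(G)/2\rceil$ is false for general signatures. The paper's remark giving $\lceil\chi(G)/2\rceil$ concerns only the all-negative signature, where a set is balanced exactly when it induces a bipartite subgraph. Your justification, that each color class is independent and hence balanced, only yields $\chi_b\le\chi(G)$. To halve the count you would need the union of two color classes to be balanced. Such a union induces a bipartite graph, and a bipartite graph can contain a negative even cycle. A $4$-cycle with one negative edge already has $\chi=2$ but $\chi_b=2>\lceil 2/2\rceil$. So Heawood's bound $\chi\le 6$ gives you only $\chi_b\le 6$, that is, $\chi_c\le 12$, not $6$.

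The missing idea is to use degeneracy instead of the chromatic number, which is what the paper does (citing \cite{NWZ21}). By Euler's formula, a simple graph embedded on the projective plane has at most $3n-3$ edges. Subgraphs inherit this, so $G$ is $5$-degenerate.

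Now color greedily with three balanced classes, along an order in which each vertex $v$ has at most $5$ earlier neighbors. Since $5<2\cdot 3$, some class contains at most one earlier neighbor of $v$. As $G$ is simple, $v$ then has at most one edge into that class. So $v$ lies on no cycle inside the class, and the class stays balanced after adding $v$.

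This gives $\chi_b(\widehat G)\le 3$. Your $\mathcal{O}_6$ construction, or directly $\chi_b=\lceil\chi_c/2\rceil$, then yields $\chi_c(\widehat G)\le 6$ and completes the proof.
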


\begin{proof}
	If $\widehat{G}$ is not balanced, then, by \Cref{thm:SignProjectivePlanar-positive}, we have $\chi_c(\widehat{G})\geq 6$. But on the other hand, by the Euler formula, we know $G$, the underlying graph of $\widehat{G}$, is 5-degenerate, and hence admits a circular 6-coloring (see \cite{NWZ21} for more details).
\end{proof}

Similarly we have the following.

 \begin{corollary}\label{cor:SignedQuad}
 	If $\widehat{G}$ is a signed quadrangulation of the projective plane where each face is positive, then either $\chi_c(\widehat{G})=2$ (when $\widehat{G}$ is balanced) or $\chi_c(\widehat{G})=4$. 
 \end{corollary}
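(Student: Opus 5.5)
The corollary has two halves: a lower bound coming from \Cref{thm:SignProjectivePlanar-positive} with $k=4$, and a matching upper bound coming from an explicit coloring. The upper bound is the step that needs real work.

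\textbf{Lower bound.} Suppose $\widehat{G}$ is not balanced. Applying \Cref{thm:SignProjectivePlanar-positive} with $k=4$ gives $\chi_c(\widehat{G})\geq \frac{2\cdot 4}{4-2}=4$.

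\textbf{Balanced case.} Suppose $\widehat{G}$ is balanced. After switching, all edges of $\widehat{G}$ are positive. Mapping every vertex to a single point $x$ of $\mathcal{O}_2$ is then a circular $2$-coloring: for a positive edge we only need $\phi(x)$ to be at distance at least $1$ from $-\phi(y)$, and on $\mathcal{O}_2$ the point $x$ is at distance exactly $1$ from $-x$. Since no signed graph with at least one edge has circular chromatic number below $2$, this gives $\chi_c(\widehat{G})=2$. I would state this briefly, or cite \cite{NWZ21}.

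\textbf{Upper bound in the unbalanced case.} The plan is to show that every signed graph whose underlying graph is properly $4$-colorable, or more generally admits a circular $4$-coloring, has a circular $4$-coloring as a signed graph. I would try the following placement first. Take $\mathcal{O}_4$ and send the colors $1,2,3,4$ of a proper coloring of $G$ to the points $0,1,2,3$.

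\begin{itemize}
\item Negative edge $xy$: its ends receive distinct points among $0,1,2,3$, so their circular distance is at least $1$, as required.
\item Positive edge $xy$: we need $\phi(x)$ to be at distance at least $1$ from $-\phi(y)=\phi(y)+2$. This fails exactly when $\phi(x)=\phi(y)+2$, that is, for the pairs of colors $\{1,3\}$ and $\{2,4\}$.
\end{itemize}

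So the naive placement breaks on positive edges joining antipodal colors. The fix is to put colors on half-points instead. A cleaner route is to use the signed analogue of degeneracy. Every quadrangulation is bipartite or has few edges: by Euler's formula on the projective plane a quadrangulation has $|E|=2|V|-2$. Hence $G$ is $3$-degenerate, and as in the proof of \Cref{cor:triangulation}, a $d$-degenerate signed graph admits a circular $(d+1)$-coloring by \cite{NWZ21}. This gives $\chi_c(\widehat{G})\leq 4$.

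Combining the two bounds gives $\chi_c(\widehat{G})=4$ in the unbalanced case.

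\textbf{Main obstacle.} The main obstacle is making sure the degeneracy bound is stated correctly. The precise claim needed is that a $d$-degenerate signed graph satisfies $\chi_c\leq d+1$, and not $2d$ or $2(d+1)$. The triangulation case, with $d=5$ giving $6$, suggests the bound is $d+1$ in this normalization. If instead only $\chi_c\leq 2\lceil\cdot\rceil$-type bounds are available, I would fall back on a direct greedy argument. In that argument, each vertex $v$ with at most $3$ earlier neighbors must avoid, for each neighbor, an open forbidden arc of length $2$ on $\mathcal{O}_4$: the arc centered at $\pm\phi(u)$, depending on the sign of the edge. With arcs of length $2$ on a circle of length $4$, three such arcs alone can cover the whole circle. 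So I would instead restrict to the discrete set of points $\{0,1,2,3\}$. There each neighbor forbids exactly one point, which leaves a free point whenever there are at most $3$ earlier neighbors.
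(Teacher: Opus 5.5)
Your route is the paper's. You get $\chi_c(\widehat{G})\ge 4$ from \Cref{thm:SignProjectivePlanar-positive} with $k=4$, and the upper bound from $3$-degeneracy. Your balanced case and your fallback greedy argument are correct. On the points $\{0,1,2,3\}$ of $\mathcal{O}_4$, each earlier neighbour $u$ forbids exactly one point: $\phi(u)$ for a negative edge, $\phi(u)+2$ for a positive one. This holds provided $\widehat{G}$ is simple, and then a $3$-degenerate signed graph has $\chi_c\le 4$. (In general this greedy argument only gives $2\lfloor d/2\rfloor+2$, which equals $d+1$ just for odd $d$. So your worry is settled for $d=3$.)

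\textbf{The gap.} The problem is the sentence ``$|E|=2|V|-2$, hence $G$ is $3$-degenerate''. A global edge count only yields one vertex of degree at most $3$ in $G$ itself. The greedy ordering needs such a vertex in every subgraph. After deleting $v$, the graph $G-v$ is no longer a quadrangulation, and your count says nothing about it. In general, a graph with $|E|=2|V|-2$ can contain much denser subgraphs, so the inference is invalid as written.

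\textbf{The missing ingredient.} This is exactly where the paper's proof spends its effort: the edge bound must be made hereditary through the embedding.
\begin{itemize}
\item Every contractible cycle of $G$ bounds a disk tiled by $4$-faces, so it is even. Hence $G$ has no contractible triangle.
\item Take any connected subgraph $G'$ other than $K_1$ or $K_2$, with the embedding inherited from $G$. Every face of $G'$ has a boundary walk of length at least $4$. A facial walk of length $3$ would traverse a triangle once, forcing it to be two-sided and hence contractible.
\item Euler's inequality $|V(G')|-|E(G')|+|F(G')|\ge 1$, together with $4|F(G')|\le 2|E(G')|$, gives $|E(G')|\le 2|V(G')|-2$. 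So $G'$ has a vertex of degree at most $3$.
\end{itemize}
With this inserted, your proof is complete.

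\textbf{Drop the opening plan.} Do not try to repair it with ``half-points''. It is false that every signed graph with $4$-colourable underlying graph has $\chi_c\le 4$. By $\chi_b=\lceil\chi_c/2\rceil$, that would mean every signed planar graph splits into two balanced sets. Negating the signature of Kardo\v{s} and Narboni's counterexample to the M\'a\v{c}ajov\'a--Raspaud--\v{S}koviera conjecture gives one that does not. Only the sparsity of $G$ gives the upper bound.
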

 
 \begin{proof}
 	If $\widehat{G}$ is not balanced, then, by \Cref{thm:SignProjectivePlanar-positive}, we have $\chi_c(\widehat{G})\geq 4$. To complete the proof, it remains to show that any such signed graph admits a circular $4$-coloring. To that end it would be enough to show that they are 3-degenerate. It follows from the Euler formula ($n-e+f=1$) that if each face of $\widehat{G}$ is of length at least 4, then it has a vertex of degree at most 3. As $\widehat{G}$ is a quadrangulation, it has no contractible triangle, because any such a triangle should contain a face of odd length in its inner planar part. Thus for any subgraph $G'$ of $\widehat{G}$, in the embedding induced by the embedding of $G$ of $G$ in the projective plane all  faces are of length at least 4. Therefore, by the Euler formula, $G'$ has a vertex of degree at most 3.  
 \end{proof}
 
By taking graphs as signed graphs where all edges are negative, one observes that this extends the results on graphs mentioned in the introduction. The parallel case of signed qudrangulations of the projective plane whose underlying graphs are bipartite was introduced in \cite{N22} and is shown to imply the case of graphs.

We shall note that \Cref{cor:triangulation} which is essentially the first case of \Cref{thm:SignProjectivePlanar-positive} ($k=3$), is the most crucial case. We show for example how the case $k=4$ would be implied by the case $k=3$. 

If $\phi$ is a circular $(4-\epsilon)$-coloring of a $4$-cycles $xyzt$ where all edges are positive, then one of the two diagonals, $xz$ or $yt$, say $xz$, is bounded inside one of the positive edges. That means, if we add $xz$ as a positive edge, then $\phi$ remains a valid circular $(4-\epsilon)$-coloring. If start with a positive $4$-cycle with some negative edges, then we $xz$ with a signature such that if we switch to make the $4$-cycle all positive, then $xz$ is also positive. Now, given a signed graph $\widehat{G}$ which is not balanced and embedded on the projective plane where all faces are positive 4-cycles, if there is a circular $(4-\epsilon)$-coloring $\phi$ of $\widehat{G}$, then we may add an edge inside each face to obtain a suppergraph also embedded on the projective plane where each face is a positive triangle. The selection of these edge guarantees that $\phi$ is a circular $(4-\epsilon)$-coloring of the larger graph, but the circular chromatic number of this graph is 6.

The case where all faces are negative cycles of a given length extends the case of graphs where all faces are odd cycles of a given length.

\begin{theorem}\label{thm:SignProjectivePlanar-negative}
	Let $\widehat{G}$ be a signed graph embedded on the projective plane such that each face is a negative cycle of length $k$. Then either $\chi_c(\widehat{G})=\frac{2k}{k-1}$, or $\chi_c(G) \geq \frac{2k-2}{k-2}$.   
\end{theorem}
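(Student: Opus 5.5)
We follow the same two-case structure as in the proof of \Cref{thm:proj_planar_odd}, now for the signed setting. Let $\cal C$ be the set of facial cycles of $\widehat G$; each is a negative $k$-cycle. If the dual $\widehat S(\cal C)$ is unbalanced, the theorem on negative cycles (the signed analogue of \Cref{thm:balanced-dual}) gives directly $\chi_c(\widehat G)\geq \frac{2k-2}{k-2}$. The lower bound $\chi_c(\widehat G)\geq \frac{2k}{k-1}$ always holds by the second item of \Cref{prop:forced-winding}, since $\widehat G$ contains a negative $k$-cycle. So it remains to show that if $\widehat S(\cal C)$ is balanced, then $\widehat G$ admits a circular $\frac{2k}{k-1}$-coloring.

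In the balanced case, we switch the dual so that all its edges are positive. Equivalently, we choose a direction $D_F$ for every face $F$ so that each edge of $G$ receives the same orientation from both faces containing it. This yields an orientation $\tau$ of $E(G)$. Our target is the signed cycle on $2k$ points of $\cal O_{\frac{2k}{k-1}}$ spaced by $\frac{1}{k-1}$, that is, the points $\frac{r}{2k}\cdot j$ with $r=\frac{2k}{k-1}$. Antipodes correspond to shifting by $k$, and the admissible moves along an edge are shifts by $\pm(k-1)$ in the appropriate sense, which amounts to shifting by $\pm 1$ together with an antipodal flip. Concretely, we seek $\phi:V\to\mathbb{Z}_{2k}$ (points $j\cdot\frac{1}{k-1}$) such that along each arc $x\to y$ of $\tau$ we have $\phi(y)=\phi(x)+k-1$ if the edge is negative and $\phi(y)=\phi(x)-1$ (that is, $+k-1$ to the antipode) if it is positive. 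Since $k(k-1)\equiv$ the total around a face, we use the negativity of each face: a direction along a negative $k$-cycle sums to $k(k-1)$ plus $k$ times the number of positive edges, and the parity bookkeeping should give $0 \bmod 2k$ exactly because the face is negative. Checking this precise arithmetic for each face, with the sign convention of the paper's circular coloring, is a routine verification. It is the signed analogue of ``each face sums to $0 \bmod 2k+1$''.

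The main step is the global consistency, namely that $\tau$ together with the signs defines a $\mathbb{Z}_{2k}$-valued tension. It suffices to verify the sum on a generating set of closed walks: the faces, which we just handled, and one noncontractible cycle $C$. If $C$ sums to $x$, then $2C$ is, after cutting along $C$, a binary combination of faces oriented consistently by $\tau$. Hence $2x\equiv 0 \pmod{2k}$, so $x\in\{0,k\}$. This is the obstacle: unlike the odd modulus $2k+1$, we cannot divide by $2$. To handle the case $x=k$, we will use the freedom of switching. A value $k$ means the walk returns to the antipode, which is exactly the allowed ambiguity for signed colorings, since $\phi$ is defined up to $\phi(-v)=-\phi(v)$. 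We therefore pass to the double cover viewpoint, where vertex $v$ is mapped to the pair $\{\phi(v),\phi(v)+k\}$, and the defect $k$ along $C$ is absorbed by switching along a cut. Alternatively, we can directly reroute the orientation of $C$'s edges through the cross-cap to change $x$ by $k$. If this fails, we would instead argue via the sign of $C$ relative to the faces, showing that the balance of the dual forces $x=0$.

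Once the tension exists, we obtain a homomorphism to the negative $k$-cycle viewed as the circular $\frac{2k}{k-1}$-clique. Combined with the general lower bound, this gives $\chi_c(\widehat G)=\frac{2k}{k-1}$ and completes the dichotomy.
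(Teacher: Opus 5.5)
\textbf{There is a genuine gap.} It sits exactly at the step you call ``the obstacle'': the case $x=k$. Your unbalanced case and the face computation ($t(\vec F)=k(n_F-1)\equiv 0 \pmod{2k}$, with $n_F$ odd) are fine. The problem is that none of your three remedies for $x=k$ can work, because in that case $\widehat G$ has no circular $\frac{2k}{k-1}$-coloring at all, even though $\widehat S(\mathcal C)$ is balanced.

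For a closed walk $W$, let $n(W)$ be its number of negative edges and $m(W)$ the number of arcs of $\tau$ it traverses forward minus backward. Your tension gives $t(W)\equiv k\,n(W)-m(W)\pmod{2k}$. Cutting along $C$ gives $2C=\sum_F a_F\vec F$ with $a_F=\pm1$. Hence $m(C)=\frac{ks}{2}$, where $s=\sum_F a_F$. Here $s$ is even, because $s\equiv|F(G)|$ and the number of faces must be even: each edge lies on two faces and all faces are negative. So $x\equiv k\bigl(n(C)-\frac s2\bigr)\pmod{2k}$.

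Now $s$, $\tau$ and the balance of the dual depend only on the embedded graph $G$. The sign of $C$, however, is not determined by the face signs. Resigning the edges crossed by an essential closed curve in general position keeps every face negative but flips the sign of $C$, so it changes $x$ by $k$. Your remedies fail against this:
\begin{itemize}
\item Switching along a cut cannot undo it: a cut meets $C$ in an even number of edges, so $x$ is switching invariant.
\item $\tau$ cannot be rerouted. By the third item of \Cref{prop:forced-winding}, any circular $\frac{2k}{k-1}$-coloring moves exactly $\frac1{k-1}$ forward along every arc of $\tau$ (up to reversing $\tau$) in the shortest extension.
\item The balance of the dual does not force $x=0$, as the resigning above shows.
\end{itemize}
A concrete example: take a $3$-colorable triangulation of the projective plane (e.g.\ the antipodal quotient of the midpoint subdivision of the octahedron), make all edges negative, and then resign along an essential curve. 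All faces are negative triangles and the dual is balanced. Yet there is no circular $3$-coloring. Since $C_{-3}$ is switching equivalent to an all-negative triangle with positive loops, every face, hence every edge, would have to be negative after some switching, contradicting that $C$ changed sign. So the implication ``dual balanced $\Rightarrow$ $\chi_c(\widehat G)=\frac{2k}{k-1}$'' that you aim at is false.

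\textbf{How to repair it.} The statement itself survives, but you need a third case: when $x=k$, prove $\chi_c(\widehat G)\geq\frac{2k-2}{k-2}$. This is a short winding computation on $C$.

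Let $\phi$ be a circular $r$-coloring with $r<\frac{2k-2}{k-2}$. By the third item of \Cref{prop:forced-winding} and the proof of the theorem on negative $k$-cycles, the directions with $w\langle\vec F,\phi,sh\rangle=+\frac12$ make every dual edge positive. Since the dual is connected, they agree with your $D_F$ up to one global reversal.

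Shortest-extension travel along an edge is well defined and antisymmetric, hence additive over integer combinations of closed walks. So $2C=\sum_F a_F\vec F$ gives $w\langle C,\phi,sh\rangle=\pm\frac s4$. This is an integer exactly when $C$ is positive, so $n(C)\equiv\frac s2\pmod 2$, which is precisely $x=0$.

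Thus $x=k$ rules out every such coloring, while $x=0$ gives your homomorphism to $C_{-k}$; with the unbalanced case this completes the dichotomy. The paper omits this proof as straightforward. Its template, \Cref{thm:proj_planar_odd}, divides by $2$ modulo $2k+1$, and that is exactly what is unavailable modulo $2k$.
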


In other words, a signed projective planar graph where all faces are negative cycles of length $k$ either maps to $C_{-k}$ or its circular chromatic number is at least as that of $C_{-(k-1)}$.

We should remark that to find a best possible upper bound for the circular chromatic number of signed planar graphs of negative girth at least $k$ is difficult question which capture in special case the Jaeger-Zhang conjecture and relates to study of flows, we refer to \cite{LNWZ24} for more on this subject.

\section{Examples}\label{sec:Examples}

Here we first present essential families of signed graphs embedded on the projective plane where every face is a positive $4$-cycle, they are not balanced, and for each integer $k$ there is a signed graphs in the family whose shortest negative cycle is of length at least $k$. It is expected that signed graphs of negative girth at least $k$ that do not admit circular $3$-coloring and have smallest number of vertices occur among these examples.

Next, by adding edges to the quadrangulations, we build signed graphs of arbitrarily high negative girth with embedding on the projective plane such that each face is a positive triangle. Their circular chromatic number then would be $6$. 

\subsection{Quadrangulations of the projective plane}

\begin{figure}[!htb]
	\centering
	\scalebox{2}{		\tikzset{math3d/.style= {x={(2cm,0cm)}, y={(0cm,2cm) }}} 
			
				\begin{tikzpicture}[math3d]
					\newcommand{\n}{15}
					\newcommand{\h}{7}
					\newcommand{\ch}{12}
					\newcommand{\rl}{1}
					\newcommand{\rh}{1}
					
					\foreach \i in {1,...,\h}{
						\path[draw, dashed, gray, opacity=.2] plot[domain=0:2*pi,samples=4*\n] ({\rh*cos(\x r)}, {\rh*sin(\x r)}, .5*\i);
						
					}

					\foreach \i in {1,3,5}{
						\foreach \t in {1,...,\n} {
							
							\draw [ thin, red, opacity=.6] ( ({(\rl)*cos((2*\t+1)*pi/\n r)},{\rl*sin((2*\t+1)*pi/\n r)},.5*\i) -- ({\rh*cos((2*\t+1)*pi/\n r-\ch)},{\rh*sin((2*\t+1)*pi/\n r-\ch)},.5*\i+.5)  -- cycle;
							
							\draw [ thin, red, opacity=.6] ( ({(\rl)*cos((2*\t+1)*pi/\n r)},{\rl*sin((2*\t+1)*pi/\n r)},.5*\i) -- ({\rh*cos((2*\t+1)*pi/\n r+\ch)},{\rh*sin((2*\t+1)*pi/\n r+\ch)},.5*\i+.5)  -- cycle;
						}

					}
					
					\foreach \i in {2,4,6}{
						\foreach \t in {1,...,\n} {
							
							\draw [ thin, red, opacity=.6] ( ({(\rl)*cos((2*\t+1)*pi/\n r+\ch)},{\rl*sin((2*\t+1)*pi/\n r+\ch)},.5*\i) -- ({\rh*cos((2*\t+1)*pi/\n r)},{\rh*sin((2*\t+1)*pi/\n r)},.5*\i+.5)  -- cycle;
							
							\draw [ thin, red, opacity=.6] ( ({(\rl)*cos((2*\t+1)*pi/\n r-\ch)},{\rl*sin((2*\t+1)*pi/\n r-\ch)},.5*\i) -- ({\rh*cos((2*\t+1)*pi/\n r)},{\rh*sin((2*\t+1)*pi/\n r)},.5*\i+.5)  -- cycle;
						}
					} 
					
					\foreach \i in {1,3,5,7}{

						\foreach \t in {1,...,\n} {
							\node[circle, black, inner sep=0mm, draw=black!80, fill=white, minimum size=.5mm] at  ({(\rl)*cos((2*\t+1)*pi/\n r)},{\rl*sin((2*\t+1)*pi/\n r)},.5*\i){};
						}
					}
					
					\foreach \i in {2,4,6}{

						\foreach \t in {1,...,\n} {
							\node[circle, red, inner sep=0mm, draw=black!80, fill=white, minimum size=.5mm] at  ({(\rl)*cos((2*\t)*pi/\n r)},{\rl*sin((2*\t)*pi/\n r)},.5*\i){};
						}
					}

				\end{tikzpicture}}        
	\caption{$P_{\ell}\times(C_{2k+1})$}
	\label{fig:PlTimesC2k+1}
\end{figure}

The graphs $M_k(C_{2k+1})$ are most prominent examples of 4-critical graphs studied by various authors. The graph  $M_{\ell}(C_{2k+1})$, for any pair of positive integers $l$ and $k$, is built from the categorical product $P_{\ell} \times C_{2k+1}$ viewed as presented in \Cref{fig:PlTimesC2k+1} by adding a universal vertex to one end and near antipodal edges to the other end as depicted in \Cref{fig:M_l(C_{2k+1})}.

\begin{figure}[!htb]
	\centering
	\scalebox{1.5}{	
	\begin{minipage}{.4\textwidth}

		\scalebox{1.3}{
			\tikzset{math3d/.style= {x={(2cm,0cm)}, y={(0cm,2cm)}}}
			\begin{tikzpicture}[math3d]
				\newcommand{\n}{15}
				\newcommand{\h}{7}
				\newcommand{\ch}{12}
				\newcommand{\rl}{1}
				\newcommand{\rh}{1}
				
				
				\path[draw, gray] plot[domain=0:2*pi,samples=4*\n] ({\rh*cos(\x r)}, {\rh*sin(\x r)}, 3.5);
				
				\foreach \i in {1,...,6}{
					\path[draw, dashed, gray] plot[domain=0:2*pi,samples=4*\n] ({\rh*cos(\x r)}, {\rh*sin(\x r)}, .5*\i);
				}
				
				%
				
				\foreach \i in {1,3,5}{
					\foreach \t in {1,...,\n} {
						
						\draw [thick, red, opacity=0.2] ( ({(\rl)*cos((2*\t+1)*pi/\n r)},{\rl*sin((2*\t+1)*pi/\n r)},.5*\i) -- ({\rh*cos((2*\t+1)*pi/\n r-\ch)},{\rh*sin((2*\t+1)*pi/\n r-\ch)},.5*\i+.5)  -- cycle;
						
						\draw [ thick, red, opacity=0.2] ( ({(\rl)*cos((2*\t+1)*pi/\n r)},{\rl*sin((2*\t+1)*pi/\n r)},.5*\i) -- ({\rh*cos((2*\t+1)*pi/\n r+\ch)},{\rh*sin((2*\t+1)*pi/\n r+\ch)},.5*\i+.5)  -- cycle;
					}

				}
				
				\foreach \i in {2,4,6}{
					\foreach \t in {1,...,\n} {
						
						\draw [ thick, red, opacity=0.2] ( ({(\rl)*cos((2*\t+1)*pi/\n r+\ch)},{\rl*sin((2*\t+1)*pi/\n r+\ch)},.5*\i) -- ({\rh*cos((2*\t+1)*pi/\n r)},{\rh*sin((2*\t+1)*pi/\n r)},.5*\i+.5)  -- cycle;
						
						\draw [ thick, red, opacity=0.2] ( ({(\rl)*cos((2*\t+1)*pi/\n r-\ch)},{\rl*sin((2*\t+1)*pi/\n r-\ch)},.5*\i) -- ({\rh*cos((2*\t+1)*pi/\n r)},{\rh*sin((2*\t+1)*pi/\n r)},.5*\i+.5)  -- cycle;
					}
				}

				\foreach \t in {1,...,\n} {
					
					\draw [  red, opacity=1] ( ({(cos((2*\t+14)*pi/\n r+\ch)},{sin((2*\t+14)*pi/\n r+\ch)},3.5) -- (-.5,-.5)  -- cycle;
					
				}
				\draw node[circle, red, inner sep=0mm, draw=black!80, fill=white, minimum size=1mm] at (-.5,-.5) {};
			\end{tikzpicture}
		}
	\end{minipage}
	\begin{minipage}{.4\textwidth}
		\scalebox{1.3}{
			\tikzset{math3d/.style= {x={(2cm,0cm)}, y={(0cm,2cm)}}}
			\begin{tikzpicture}[math3d]
				\newcommand{\n}{15}
				\newcommand{\h}{7}
				\newcommand{\ch}{12}
				\newcommand{\rl}{1}
				\newcommand{\rh}{1}
				
				\path[draw, gray] plot[domain=0:2*pi,samples=4*\n, opacity=.5] ({\rh*cos(\x r)}, {\rh*sin(\x r)}, .5);
				
				\foreach \i in {1,...,\h}{
					\path[draw, dashed, gray] plot[domain=0:2*pi,samples=4*\n] ({\rh*cos(\x r)}, {\rh*sin(\x r)}, .5*\i);
					
					%
					
				}
				\foreach \i in {1,3,5}{
					\foreach \t in {1,...,\n} {
						
						\draw [thick, red, opacity=0.2] ( ({(\rl)*cos((2*\t+1)*pi/\n r)},{\rl*sin((2*\t+1)*pi/\n r)},.5*\i) -- ({\rh*cos((2*\t+1)*pi/\n r-\ch)},{\rh*sin((2*\t+1)*pi/\n r-\ch)},.5*\i+.5)  -- cycle;
						
						\draw [ thick, red, opacity=0.2] ( ({(\rl)*cos((2*\t+1)*pi/\n r)},{\rl*sin((2*\t+1)*pi/\n r)},.5*\i) -- ({\rh*cos((2*\t+1)*pi/\n r+\ch)},{\rh*sin((2*\t+1)*pi/\n r+\ch)},.5*\i+.5)  -- cycle;
					}

				}
				
				\foreach \i in {2,4,6}{
					\foreach \t in {1,...,\n} {
						
						\draw [ thick, red, opacity=0.2] ( ({(\rl)*cos((2*\t+1)*pi/\n r+\ch)},{\rl*sin((2*\t+1)*pi/\n r+\ch)},.5*\i) -- ({\rh*cos((2*\t+1)*pi/\n r)},{\rh*sin((2*\t+1)*pi/\n r)},.5*\i+.5)  -- cycle;
						
						\draw [ thick, red, opacity=0.2] ( ({(\rl)*cos((2*\t+1)*pi/\n r-\ch)},{\rl*sin((2*\t+1)*pi/\n r-\ch)},.5*\i) -- ({\rh*cos((2*\t+1)*pi/\n r)},{\rh*sin((2*\t+1)*pi/\n r)},.5*\i+.5)  -- cycle;
					}
				}

				\foreach \t in {1,...,\n} {
					
					\draw [ red, opacity=1] ( ({(cos((2*\t+14)*pi/\n r+\ch)},{sin((2*\t+14)*pi/\n r+\ch)},.5) -- ({cos((2*\t+1)*pi/\n r)},{sin((2*\t+1)*pi/\n r)},.5)  -- cycle;
					
				}
				
			\end{tikzpicture}
		}
	\end{minipage}}        
	\caption{$M_l(C_{2k+1})$}
	\label{fig:M_l(C_{2k+1})}
\end{figure}

The product $P_{\ell}\times C_{2k+1}$ can be viewed as a planar graph where all faces except two, say $F_1$ and $F_{\ell}$, are 4-faces. The faces $F_1$ and $F_{\ell}$ are each a $(4k+2)$- cycle. Adding a universal vertex to the bottom layer corresponds to adding a vertex $u$ on the face $F_1$ which is joined to every second vertex of $F_1$. Thus all resulting faces are 4-cycles. On the face $F_{\ell}$, having labeled vertices in cyclic order such that vertices labeled even are further away from $u$, each vertex with an even label is connected to two vertices of even label that, on the face $F_{\ell}$, are furthest away from it. These edges can be viewed as non-crossing if a cross-cap inserted in the face after which all the resulting faces are 4-cycles. In \Cref{fig:M3-7} we have presented a projective planar embedding of $M_{3}(C_7)$.

\begin{figure}[!htb]
	\centering
	\scalebox{1}{		\centering
		\begin{tikzpicture}
				
				\foreach \i in {1,2,3,4,5,6,7}
				{ \draw node[line width=0.05mm, circle, fill=white, minimum width=.5mm, inner sep=0mm, draw=black!80, minimum size=1mm]  (\i) at ({\i*360/7}:1.2){};}
				
				\foreach \i in {8,9,10,11,12,13,14}
				{ \draw node[line width=0.05mm, circle, fill=white, minimum width=.5mm, inner sep=0mm, draw=black!80, minimum size=1mm]  (\i) at ({\i*360/7+180/7}:2){};
				}
				
				\foreach \i in {15,16,17,18,19,20,21}
				{ \draw node[line width=0.05mm, circle, fill=white, minimum width=.5mm, inner sep=0mm, draw=black!80, minimum size=1mm]  (\i) at ({\i*360/7}:2.8){};}
				
				\foreach \i in {22,23,24,25,26,27,28}
				{ \draw node[line width=0.05mm, circle, fill=white, minimum width=.5mm, inner sep=0mm, draw=black!80, minimum size=1mm]  (\i) at ({\i*360/7+180/7}:3.6){};}
				
				\foreach \i in {29,30,...,42}
				{ \draw node[line width=0.05mm, circle, fill=white, minimum width=.5mm, inner sep=0mm, draw=black!80, minimum size=0mm]  (\i) at ({\i*360/14+180/14}:4.6){};}

				\draw [ thick, red, line width=0.15mm, opacity=0.9] (1) -- (8);
				\draw [ thick, red, line width=0.15mm, opacity=0.9] (2) -- (9);
				\draw [ thick, red, line width=0.15mm, opacity=0.9] (3) -- (10);
				\draw [ thick, red, line width=0.15mm, opacity=0.9] (4) -- (11);
				\draw [ thick, red, line width=0.15mm, opacity=0.9] (5) -- (12);
				\draw [ thick, red, line width=0.15mm, opacity=0.9] (6) -- (13);
				\draw [ thick, red, line width=0.15mm, opacity=0.9] (7) -- (14);
				
				\draw [ thick, red, line width=0.15mm, opacity=0.9] (1) -- (14);
				\draw [ thick, red, line width=0.15mm, opacity=0.9] (2) -- (8);
				\draw [ thick, red, line width=0.15mm, opacity=0.9] (3) -- (9);
				\draw [ thick, red, line width=0.15mm, opacity=0.9] (4) -- (10);
				\draw [ thick, red, line width=0.15mm, opacity=0.9] (5) -- (11);
				\draw [ thick, red, line width=0.15mm, opacity=0.9] (6) -- (12);
				\draw [ thick, red, line width=0.15mm, opacity=0.9] (7) -- (13);

				\draw [ thick, red, line width=0.15mm, opacity=0.9] (8) -- (15);
				\draw [ thick, red, line width=0.15mm, opacity=0.9] (9) -- (16);
				\draw [ thick, red, line width=0.15mm, opacity=0.9] (10) -- (17);
				\draw [ thick, red, line width=0.15mm, opacity=0.9] (11) -- (18);
				\draw [ thick, red, line width=0.15mm, opacity=0.9] (12) -- (19);
				\draw [ thick, red, line width=0.15mm, opacity=0.9] (13) -- (20);
				\draw [ thick, red, line width=0.15mm, opacity=0.9] (14) -- (21);

				\draw [ thick, red, line width=0.15mm, opacity=0.9] (8) -- (16);
				\draw [ thick, red, line width=0.15mm, opacity=0.9] (9) -- (17);
				\draw [ thick, red, line width=0.15mm, opacity=0.9] (10) -- (18);
				\draw [ thick, red, line width=0.15mm, opacity=0.9] (11) -- (19);
				\draw [ thick, red, line width=0.15mm, opacity=0.9] (12) -- (20);
				\draw [ thick, red, line width=0.15mm, opacity=0.9] (13) -- (21);
				\draw [ thick, red, line width=0.15mm, opacity=0.9] (14) -- (15);

				\draw [ thick, red, line width=0.15mm, opacity=0.9] (15) -- (22);
				\draw [ thick, red, line width=0.15mm, opacity=0.9] (16) -- (23);
				\draw [ thick, red, line width=0.15mm, opacity=0.9] (17) -- (24);
				\draw [ thick, red, line width=0.15mm, opacity=0.9] (18) -- (25);
				\draw [ thick, red, line width=0.15mm, opacity=0.9] (19) -- (26);
				\draw [ thick, red, line width=0.15mm, opacity=0.9] (20) -- (27);
				\draw [ thick, red, line width=0.15mm, opacity=0.9] (21) -- (28);
				
				\draw [ thick, red, line width=0.15mm, opacity=0.9] (15) -- (28);
				\draw [ thick, red, line width=0.15mm, opacity=0.9] (16) -- (22);
				\draw [ thick, red, line width=0.15mm, opacity=0.9] (17) -- (23);
				\draw [ thick, red, line width=0.15mm, opacity=0.9] (18) -- (24);
				\draw [ thick, red, line width=0.15mm, opacity=0.9] (19) -- (25);
				\draw [ thick, red, line width=0.15mm, opacity=0.9] (20) -- (26);
				\draw [ thick, red, line width=0.15mm, opacity=0.9] (21) -- (27);

				\draw  [ dashed, gray, line width=0.15mm, opacity=0.2] (0,0) circle (1.2cm);
				\draw  [ dashed, gray, line width=0.15mm, opacity=0.2] (0,0) circle (2cm);
				\draw  [ dashed, gray, line width=0.15mm, opacity=0.2] (0,0) circle (2.8cm);
				\draw  [ dashed, gray, line width=0.15mm, opacity=0.2] (0,0) circle (3.6cm);
				\draw  [ dashed, black, line width=0.15mm, opacity=1] (0,0) circle (4.6cm);
				
				\draw node[line width=0.05mm, circle, fill=white, minimum width=.5mm, inner sep=0mm, draw=black!80, minimum size=1mm]  (90) at ({90}:0){};
				
				\draw [ thick, red, line width=0.15mm, opacity=0.9] (1) -- (90);
				\draw [ thick, red, line width=0.15mm, opacity=0.9] (2) -- (90);
				\draw [ thick, red, line width=0.15mm, opacity=0.9] (3) -- (90);
				\draw [ thick, red, line width=0.15mm, opacity=0.9] (4) -- (90);
				\draw [ thick, red, line width=0.15mm, opacity=0.9] (5) -- (90);
				\draw [ thick, red, line width=0.15mm, opacity=0.9] (6) -- (90);
				\draw [ thick, red, line width=0.15mm, opacity=0.9] (7) -- (90);
				
				\draw [ thick, red, line width=0.15mm, opacity=0.9] (28) -- (42);
				\draw [ thick, red, line width=0.15mm, opacity=0.9] (28) -- (29);
				
				\draw [ thick, red, line width=0.15mm, opacity=0.9] (22) -- (30);
				\draw [ thick, red, line width=0.15mm, opacity=0.9] (22) -- (31);
				
				\draw [ thick, red, line width=0.15mm, opacity=0.9] (23) -- (32);
				\draw [ thick, red, line width=0.15mm, opacity=0.9] (23) -- (33);
				
				\draw [ thick, red, line width=0.15mm, opacity=0.9] (24) -- (34);
				\draw [ thick, red, line width=0.15mm, opacity=0.9] (24) -- (35);
				
				\draw [ thick, red, line width=0.15mm, opacity=0.9] (25) -- (36);
				\draw [ thick, red, line width=0.15mm, opacity=0.9] (25) -- (37);
				
				\draw [ thick, red, line width=0.15mm, opacity=0.9] (26) -- (38);
				\draw [ thick, red, line width=0.15mm, opacity=0.9] (26) -- (39);
				
				\draw [ thick, red, line width=0.15mm, opacity=0.9] (27) -- (40);
				\draw [ thick, red, line width=0.15mm, opacity=0.9] (27) -- (41);

			\end{tikzpicture}}        
	\caption{$M_{3}(C_7)$}
	\label{fig:M3-7}
\end{figure}

In \cite{VT95} (see also \cite{V87}), the graphs $M_{k}(C_{2k+1})$ are introduced as the potentially the smallest 4-chromatic graphs of odd girth at least $2k+1$. Noting that $M_{k}(C_{2k+1})$ has $2k^2+k+1$ vertices, a lower bound of $(k-1)^2$ is proved on the order of such graphs in \cite{J01}. The proof is a basic modification of \cite{N99} where a lower bound of $\frac{(k-1)^2}{2}$ was proved. For the subclass of 4-chromatic projective planar graphs of odd girth at least $2k+1$, it is proved in \cite{ES18} that $M_{k}(C_{2k+1})$ has the smallest order.

As mentioned before, the graphs constructed here can be viewed as signed graphs where all edges are negative. A bipartite analogue of these graphs, first introduced in an unpublished work \cite{N22}, are built quite similarly. We view $P_{\ell}\times C_{2k+1}$ as a signed graph where all edges are negative, then add universal vertex to the bottom layer which is connected to every second vertex of face $F_1$ by a negative edge. On the top two layer, or equivalently, face $F_{\ell}$, each vertex is connected to the antipodal vertex on the $F_{\ell}$ with a positive edge. Let $BM_{\ell, 2k+1}$ be the result signed graph and see \Cref{fig:BMl2k+1} for a depiction of this construction and \Cref{fig:BM3-7} for a specific example.

\begin{figure}[!htb]
	\centering
	\scalebox{1.5}{	\begin{minipage}{.4\textwidth}

		\scalebox{1.3}{
			\tikzset{math3d/.style= {x={(2cm,0cm)}, y={(0cm,2cm)}}}
			\begin{tikzpicture}[math3d]
				\newcommand{\n}{15}
				\newcommand{\h}{7}
				\newcommand{\ch}{12}
				\newcommand{\rl}{1}
				\newcommand{\rh}{1}
				
				\path[draw, gray, line width=0.05mm] plot[domain=0:2*pi,samples=4*\n] ({\rh*cos(\x r)}, {\rh*sin(\x r)}, 3.5);
				
				\foreach \i in{1,2}{
					\path[draw, gray] plot[domain=0:2*pi,samples=4*\n] ({\rh*cos(\x r)}, {\rh*sin(\x r)}, .5*\i);
				}
				
				\foreach \i in {3,...,6}{
					\path[draw, dashed, gray] plot[domain=0:2*pi,samples=4*\n] ({\rh*cos(\x r)}, {\rh*sin(\x r)}, .5*\i);

				}
				\foreach \i in {1,3,5}{
					\foreach \t in {1,...,\n} {
						
						\draw [thick, red, line width=0.15mm, opacity=0.2] ( ({(\rl)*cos((2*\t+1)*pi/\n r)},{\rl*sin((2*\t+1)*pi/\n r)},.5*\i) -- ({\rh*cos((2*\t+1)*pi/\n r-\ch)},{\rh*sin((2*\t+1)*pi/\n r-\ch)},.5*\i+.5)  -- cycle;
						
						\draw [ thick, red, line width=0.15mm, opacity=0.2] ( ({(\rl)*cos((2*\t+1)*pi/\n r)},{\rl*sin((2*\t+1)*pi/\n r)},.5*\i) -- ({\rh*cos((2*\t+1)*pi/\n r+\ch)},{\rh*sin((2*\t+1)*pi/\n r+\ch)},.5*\i+.5)  -- cycle;
					}

				}
				
				\foreach \i in {2,4,6}{
					\foreach \t in {1,...,\n} {
						
						\draw [ thick, red, line width=0.15mm, opacity=0.2] ( ({(\rl)*cos((2*\t+1)*pi/\n r+\ch)},{\rl*sin((2*\t+1)*pi/\n r+\ch)},.5*\i) -- ({\rh*cos((2*\t+1)*pi/\n r)},{\rh*sin((2*\t+1)*pi/\n r)},.5*\i+.5)  -- cycle;
						
						\draw [ thick, red, line width=0.15mm, opacity=0.2] ( ({(\rl)*cos((2*\t+1)*pi/\n r-\ch)},{\rl*sin((2*\t+1)*pi/\n r-\ch)},.5*\i) -- ({\rh*cos((2*\t+1)*pi/\n r)},{\rh*sin((2*\t+1)*pi/\n r)},.5*\i+.5)  -- cycle;
					}
				}

				\foreach \t in {1,...,\n} {
					
					\draw [ thick, red, line width=0.15mm, opacity=0.9] ( ({(cos((2*\t+14)*pi/\n r+\ch)},{sin((2*\t+14)*pi/\n r+\ch)},3.5) -- (-.5,-.5)  -- cycle;
					
				}
				\draw node[line width=0.05mm, circle, fill=white, minimum width=.5mm, inner sep=0mm, draw=black!80, minimum size=1mm] at (-.5,-.5) {};
				
					%
					%
				
			\end{tikzpicture}
		}
	\end{minipage}
	\begin{minipage}{.4\textwidth}
		\scalebox{1.3}{
			\tikzset{math3d/.style= {x={(2cm,0cm)}, y={(0cm,2cm)}}}
			\begin{tikzpicture}[math3d]
				\newcommand{\n}{15}
				\newcommand{\h}{7}
				\newcommand{\ch}{12}
				\newcommand{\rl}{1}
				\newcommand{\rh}{1}
				
				\foreach \i in{1,2}{
					\path[draw, gray] plot[domain=0:2*pi,samples=4*\n] ({\rh*cos(\x r)}, {\rh*sin(\x r)}, .5*\i);
				}
				
				\foreach \i in {3,...,\h}{
					\path[draw, dashed, gray] plot[domain=0:2*pi,samples=4*\n] ({\rh*cos(\x r)}, {\rh*sin(\x r)}, .5*\i);
					
				}
				\foreach \i in {1,3,5}{
					\foreach \t in {1,...,\n} {
						
						\draw [thick, red, opacity=0.2] ( ({(\rl)*cos((2*\t+1)*pi/\n r)},{\rl*sin((2*\t+1)*pi/\n r)},.5*\i) -- ({\rh*cos((2*\t+1)*pi/\n r-\ch)},{\rh*sin((2*\t+1)*pi/\n r-\ch)},.5*\i+.5)  -- cycle;
						
						\draw [ thick, red, opacity=0.2] ( ({(\rl)*cos((2*\t+1)*pi/\n r)},{\rl*sin((2*\t+1)*pi/\n r)},.5*\i) -- ({\rh*cos((2*\t+1)*pi/\n r+\ch)},{\rh*sin((2*\t+1)*pi/\n r+\ch)},.5*\i+.5)  -- cycle;
					}

				}
				
				\foreach \i in {2,4,6}{
					\foreach \t in {1,...,\n} {
						
						\draw [ thick, red, opacity=0.2] ( ({(\rl)*cos((2*\t+1)*pi/\n r+\ch)},{\rl*sin((2*\t+1)*pi/\n r+\ch)},.5*\i) -- ({\rh*cos((2*\t+1)*pi/\n r)},{\rh*sin((2*\t+1)*pi/\n r)},.5*\i+.5)  -- cycle;
						
						\draw [ thick, red, opacity=0.2] ( ({(\rl)*cos((2*\t+1)*pi/\n r-\ch)},{\rl*sin((2*\t+1)*pi/\n r-\ch)},.5*\i) -- ({\rh*cos((2*\t+1)*pi/\n r)},{\rh*sin((2*\t+1)*pi/\n r)},.5*\i+.5)  -- cycle;
					}
				}

				\foreach \t in {1,...,\n} {
					
					\draw [line width=0.075mm,  blue, opacity=0.9] ( ({(cos((2*\t+14)*pi/\n r+\ch)},{sin((2*\t+14)*pi/\n r+\ch)},.5) .. controls ({cos((2*\t+14)*pi/\n r+\ch)+cos((2*\t)*pi/\n r)}, {sin((2*\t+14)*pi/\n r+\ch)+sin((2*\t)*pi/\n r)})  .. ({cos((2*\t)*pi/\n r)},{sin((2*\t)*pi/\n r)},1);
					
				}

			\end{tikzpicture}
		}
	\end{minipage}}        
	\caption{Bipartite analogue}
	\label{fig:BMl2k+1}
\end{figure}

\begin{figure}[!htb]
	\centering
	\scalebox{1.2}{	\begin{minipage}[t]{.4\textwidth}
		\centering
		\scalebox{1}{
			\begin{tikzpicture}
				
				\foreach \i in {1,2,3,4,5,6,7}
				{ \draw node[line width=0.05mm, circle, fill=white, minimum width=.5mm, inner sep=0mm, draw=black!80, minimum size=1mm]  (\i) at ({\i*360/7}:1.2){};}
				
				\foreach \i in {8,9,10,11,12,13,14}
				{ \draw node[line width=0.05mm, circle, fill=white, minimum width=.5mm, inner sep=0mm, draw=black!80, minimum size=1mm]  (\i) at ({\i*360/7+180/7}:2.2){};
				}
				
				\foreach \i in {15,16,17,18,19,20,21}
				{ \draw node[line width=0.05mm, circle, fill=white, minimum width=.5mm, inner sep=0mm, draw=black!80, minimum size=1mm]  (\i) at ({\i*360/7}:3.1){};}
				
				\foreach \i in {22,23,24,25,26,27,28}
				{ \draw node[line width=0.05mm, circle, fill=white, minimum width=.5mm, inner sep=0mm, draw=black!80, minimum size=1mm]  (\i) at ({\i*360/7+180/7}:4.2){};}
				
				\foreach \i in {29,30,...,42}
				{ \draw node[line width=0.05mm, circle, fill=white, minimum width=.5mm, inner sep=0mm, draw=black!80, minimum size=0mm]  (\i) at ({\i*360/14+180/14+14}:5.6){};}

				\draw [ thick, red, line width=0.15mm, opacity=0.9] (1) -- (8);
				\draw [ thick, red, line width=0.15mm, opacity=0.9] (2) -- (9);
				\draw [ thick, red, line width=0.15mm, opacity=0.9] (3) -- (10);
				\draw [ thick, red, line width=0.15mm, opacity=0.9] (4) -- (11);
				\draw [ thick, red, line width=0.15mm, opacity=0.9] (5) -- (12);
				\draw [ thick, red, line width=0.15mm, opacity=0.9] (6) -- (13);
				\draw [ thick, red, line width=0.15mm, opacity=0.9] (7) -- (14);
				
				\draw [ thick, red, line width=0.15mm, opacity=0.9] (1) -- (14);
				\draw [ thick, red, line width=0.15mm, opacity=0.9] (2) -- (8);
				\draw [ thick, red, line width=0.15mm, opacity=0.9] (3) -- (9);
				\draw [ thick, red, line width=0.15mm, opacity=0.9] (4) -- (10);
				\draw [ thick, red, line width=0.15mm, opacity=0.9] (5) -- (11);
				\draw [ thick, red, line width=0.15mm, opacity=0.9] (6) -- (12);
				\draw [ thick, red, line width=0.15mm, opacity=0.9] (7) -- (13);

				\draw [ thick, red, line width=0.15mm, opacity=0.9] (8) -- (15);
				\draw [ thick, red, line width=0.15mm, opacity=0.9] (9) -- (16);
				\draw [ thick, red, line width=0.15mm, opacity=0.9] (10) -- (17);
				\draw [ thick, red, line width=0.15mm, opacity=0.9] (11) -- (18);
				\draw [ thick, red, line width=0.15mm, opacity=0.9] (12) -- (19);
				\draw [ thick, red, line width=0.15mm, opacity=0.9] (13) -- (20);
				\draw [ thick, red, line width=0.15mm, opacity=0.9] (14) -- (21);

				\draw [ thick, red, line width=0.15mm, opacity=0.9] (8) -- (16);
				\draw [ thick, red, line width=0.15mm, opacity=0.9] (9) -- (17);
				\draw [ thick, red, line width=0.15mm, opacity=0.9] (10) -- (18);
				\draw [ thick, red, line width=0.15mm, opacity=0.9] (11) -- (19);
				\draw [ thick, red, line width=0.15mm, opacity=0.9] (12) -- (20);
				\draw [ thick, red, line width=0.15mm, opacity=0.9] (13) -- (21);
				\draw [ thick, red, line width=0.15mm, opacity=0.9] (14) -- (15);

				\draw [ thick, red, line width=0.15mm, opacity=0.9] (15) -- (22);
				\draw [ thick, red, line width=0.15mm, opacity=0.9] (16) -- (23);
				\draw [ thick, red, line width=0.15mm, opacity=0.9] (17) -- (24);
				\draw [ thick, red, line width=0.15mm, opacity=0.9] (18) -- (25);
				\draw [ thick, red, line width=0.15mm, opacity=0.9] (19) -- (26);
				\draw [ thick, red, line width=0.15mm, opacity=0.9] (20) -- (27);
				\draw [ thick, red, line width=0.15mm, opacity=0.9] (21) -- (28);
				
				\draw [ thick, red, line width=0.15mm, opacity=0.9] (15) -- (28);
				\draw [ thick, red, line width=0.15mm, opacity=0.9] (16) -- (22);
				\draw [ thick, red, line width=0.15mm, opacity=0.9] (17) -- (23);
				\draw [ thick, red, line width=0.15mm, opacity=0.9] (18) -- (24);
				\draw [ thick, red, line width=0.15mm, opacity=0.9] (19) -- (25);
				\draw [ thick, red, line width=0.15mm, opacity=0.9] (20) -- (26);
				\draw [ thick, red, line width=0.15mm, opacity=0.9] (21) -- (27);

				\draw  [ dashed, gray, line width=0.15mm, opacity=0.2] (0,0) circle (1.2cm);
				\draw  [ dashed, gray, line width=0.15mm, opacity=0.2] (0,0) circle (2.2cm);
				\draw  [ dashed, gray, line width=0.15mm, opacity=0.2] (0,0) circle (3.1cm);
				\draw  [ dashed, gray, line width=0.15mm, opacity=0.2] (0,0) circle (4.2cm);
				\draw  [ dashed, black, line width=0.15mm, opacity=1] (0,0) circle (5.6cm);
				
				\draw node[line width=0.05mm, circle, fill=white, minimum width=.5mm, inner sep=0mm, draw=black!80, minimum size=1mm]  (90) at ({90}:0){};
				
				\draw [ thick, red, line width=0.15mm, opacity=0.9] (1) -- (90);
				\draw [ thick, red, line width=0.15mm, opacity=0.9] (2) -- (90);
				\draw [ thick, red, line width=0.15mm, opacity=0.9] (3) -- (90);
				\draw [ thick, red, line width=0.15mm, opacity=0.9] (4) -- (90);
				\draw [ thick, red, line width=0.15mm, opacity=0.9] (5) -- (90);
				\draw [ thick, red, line width=0.15mm, opacity=0.9] (6) -- (90);
				\draw [ thick, red, line width=0.15mm, opacity=0.9] (7) -- (90);
				
				\draw [ thick, blue, line width=0.15mm, opacity=0.9] (28) -- (42);
				\draw [ thick, blue, line width=0.15mm, opacity=0.9] (15) -- (29);
				
				\draw [ thick, blue, line width=0.15mm, opacity=0.9] (22) -- (30);
				\draw [ thick, blue, line width=0.15mm, opacity=0.9] (16) -- (31);
				
				\draw [ thick, blue, line width=0.15mm, opacity=0.9] (23) -- (32);
				\draw [ thick, blue, line width=0.15mm, opacity=0.9] (17) -- (33);
				
				\draw [ thick, blue, line width=0.15mm, opacity=0.9] (24) -- (34);
				\draw [ thick, blue, line width=0.15mm, opacity=0.9] (18) -- (35);
				
				\draw [ thick, blue, line width=0.15mm, opacity=0.9] (25) -- (36);
				\draw [ thick, blue, line width=0.15mm, opacity=0.9] (19) -- (37);
				
				\draw [ thick, blue, line width=0.15mm, opacity=0.9] (26) -- (38);
				\draw [ thick, blue, line width=0.15mm, opacity=0.9] (20) -- (39);
				
				\draw [ thick, blue, line width=0.15mm, opacity=0.9] (27) -- (40);
				\draw [ thick, blue, line width=0.15mm, opacity=0.9] (21) -- (41);

			\end{tikzpicture}
		}

	\end{minipage}}        
	\caption{$\widehat{BM_{3}(C_7)}$}
	\label{fig:BM3-7}
\end{figure}

Some key properties of $BM_{\ell, 2k+1}$ are stated in the following proposition. We refer to \cite{N22} for details on the first two claims, noting that for the last item one may apply \Cref{cor:SignedQuad}.

\begin{proposition}
	The signed graph $BM_{\ell, 2k+1}$ satisfies the followings.
	\begin{itemize}
		\item Its underlying graph is bipartite.
		\item Its negative girth is $\min \{2l, 2k+2\}$.
		\item It admits an embedding on the projective plane where all faces are positive 4-cycles.
		\item Its circular chromatic number is 4.
	\end{itemize}
\end{proposition}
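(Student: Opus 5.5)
Each item is handled from the explicit description of $BM_{\ell,2k+1}$; the last item is reduced to \Cref{cor:SignedQuad}.

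\textbf{Bipartiteness.} The product $P_\ell\times C_{2k+1}$ is bipartite, with the layer index mod $2$ as bipartition. The universal vertex $u$ is joined only to vertices of the bottom layer, so it takes the opposite class. On the top face $F_\ell$ the vertices alternate between the two top layers. Since $F_\ell$ has length $4k+2$, antipodal vertices are at distance $2k+1$ along it, hence lie in different layers. So every positive antipodal edge joins the two classes.

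\textbf{Embedding.} As in the construction of $M_\ell(C_{2k+1})$, all faces except $F_1$ and $F_\ell$ are $4$-cycles. Inserting $u$ into $F_1$ splits it into $4$-faces. On $F_\ell$ we insert a cross-cap and draw the $2k+1$ antipodal chords through it. Consecutive chords $v_iv_{i+2k+1}$ and $v_{i+1}v_{i+2k+2}$ then bound the faces $v_iv_{i+1}v_{i+2k+2}v_{i+2k+1}$, which are $4$-cycles.

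For positivity of faces: every face not touching the cross-cap uses only negative edges, so it has four negative edges and is positive. Each cross-cap face has two positive chords and two negative edges of $F_\ell$, so it is positive as well.

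\textbf{Negative girth.} This is the main obstacle; I would follow \cite{N22}. A cycle is negative exactly when it uses an odd number of chords, which for these faces means it is noncontractible in the projective plane. A noncontractible cycle must cross from the cross-cap region either back through the cap or around the structure. There are two kinds of candidates.
\begin{itemize}
\item One chord plus a shortest path in the cylinder $P_\ell\times C_{2k+1}$ back between antipodal top vertices. This path either goes around the top rim, giving length $(2k+1)+1=2k+2$, or descends to $u$ and back. A vertex in the top two layers is at distance about $\ell-1$ or $\ell$ from $u$, which gives length roughly $2\ell$.
\item Three or more chords, which are not shorter.
\end{itemize}
A distance count in the layered structure shows that every noncontractible cycle has length at least $\min\{2\ell,2k+2\}$, and the two candidates above attain it.

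\textbf{Circular chromatic number.} Because the negative girth is finite, a negative cycle exists and the signed graph is not balanced. Applying \Cref{cor:SignedQuad} to the embedding above, whose faces are all positive $4$-cycles, gives $\chi_c=4$.
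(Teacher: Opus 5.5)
Your proposal follows the paper's route: the paper refers to \cite{N22} for bipartiteness and the negative girth, reads the embedding off the construction, and gets $\chi_c=4$ from \Cref{cor:SignedQuad} once the signed graph is unbalanced, just as you do. The one step you only assert is the lower bound on the negative girth, in particular ``three or more chords are not shorter''. It can be closed directly. In product coordinates the antipodal chords are $(\ell-1,j)(\ell,j)$. Along a cycle avoiding $u$ with $p$ product edges and $c$ chords, the $C_{2k+1}$-coordinate has net displacement $m(2k+1)$, so $m\equiv p\equiv c \pmod 2$ (the last congruence by bipartiteness); hence $p\geq 2k+1$ whenever $c$ is odd. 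A cycle through $u$ that uses a chord meets layer $\ell$, which is at distance $\ell$ from $u$, so it has length at least $2\ell$.
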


For the purpose of studying the circular chromatic number of signed graphs we may assume each vertex has a positive loop on it. With such assumption, one observes that identifying thtwo ends of each positive edge of $BM_{\ell, 2k+1}$ is, simultaneously, both a minor and a homomorphism operation result of which is $M_{\ell-1}(C_{2k+1})$. Thus,providing lower bounds o parameters such as the circular chromatic number of these signed bipartite graphs also verifies the claims on classic graph counterpart. 

Among these graphs the smallest having negative girth $2k$ is $BM_{k, 2k-1}$ which has $2k^2-k+1$ vertices. Perhaps this is the best possible order of a signed bipartite graph of negative girth $2k$ having circular chromatic number 4. However, in the following we introduce another family of the same order.

\begin{figure}[!htb]
	\centering
	\scalebox{.8}{			\begin{tikzpicture}
				
			\draw node[line width=0.05mm, circle, fill=white, minimum width=.5mm, inner sep=0mm, draw=black!80, minimum size=1mm]  (90) at ({90}:0){};	
				
			\foreach \i in {1,...,8}
			{ \draw node[line width=0.05mm, circle, fill=white, minimum width=.5mm, inner sep=0mm, draw=black!80, minimum size=1mm]  (\i) at ({\i*360/8}:1.2){};}
			
			\foreach \i in {9,...,16}
			{ \draw node[line width=0.05mm, circle, fill=white, minimum width=.5mm, inner sep=0mm, draw=black!80, minimum size=1mm]  (\i) at ({\i*360/8+180/8}:3){};}
			
			\foreach \i in {17,...,24}
			{ \draw node[line width=0.05mm, circle, fill=white, minimum width=.5mm, inner sep=0mm, draw=black!80, minimum size=1mm]  (\i) at ({\i*360/8}:5){};}
			
			\foreach \i in {25,...,32}
			{ \draw node[line width=0.05mm, circle, fill=white, minimum width=.5mm, inner sep=0mm, draw=black!80, minimum size=1mm]  (\i) at ({\i*360/8+180/8}:7){};}
			
			\foreach \i in {33,...,48}
			{ \draw node[line width=0.05mm, circle, fill=white, minimum width=.5mm, inner sep=0mm, draw=black!80, minimum size=0mm]  (\i) at ({mod(\i,16)*360/16}:9){};}
			
			\foreach \i / \j in {1/90,2/90, 3/90,4/90,5/90,6/90,7/90,8/90,
				                 1/9,2/10,3/11,4/12,5/13,6/14, 7/15,8/16, 
			                    	9/2,10/3,11/4,12/5,13/6,14/7,15/8,16/1,
			                    	9/17,10/18,11/19,12/20, 13/21,14/22, 15/23,16/24,
			                    	18/9,19/10,20/11,21/12,22/13,23/14,24/15,17/16,
			                    	17/25,18/26,19/27,20/28,21/29,22/30,23/31,24/32,
			                    	25/18,26/19,27/20,28/21,29/22,30/23,31/24,32/17}{
			\draw [ thick, red, line width=0.15mm, opacity=0.9] (\i) -- (\j);}

				\foreach \i / \j in {17/34, 18/36, 19/38, 20/40, 21/42, 22/44, 23/46, 24/48, 
				                     25/35, 26/37, 27/39, 28/41, 29/43, 30/45, 31/47, 32/33}{
				\draw [ thick, red, line width=0.15mm, opacity=0.9] (\i) -- (\j);}

			\draw  [ dashed, gray, line width=0.15mm, opacity=0.2] (0,0) circle (1.2cm);
			\draw  [ dashed, gray, line width=0.15mm, opacity=0.2] (0,0) circle (3cm);
			\draw  [ dashed, gray, line width=0.15mm, opacity=0.2] (0,0) circle (5cm);
			\draw  [ dashed, gray, line width=0.15mm, opacity=0.2] (0,0) circle (7cm);
			\draw  [ dashed, black, line width=0.15mm, opacity=1] (0,0) circle (9cm);

			\end{tikzpicture}}        
	\caption{$M_{3}(C_8)$}
	\label{fig:M3-8}
\end{figure}

Our next construction is based on $P_{\ell}\times C_{2k}$ to which we add a new vertex, say $u$, joined to all $2k$ vertices of the first layer. Viewing the graph on the plane with a natural embedding, where $u$ is in the center, the outer face is a $4k$-cycle. Connecting antipodal vertices of this cycle through a cross cap we get a quadrangulation of the projective plane whose odd girth is the $\min \{2l+1, 2k+1 \}$, as it is quadrangulation of the projective plane which is not bipartite, it has circular chromatic number $4$. We name this graph $M_{\ell}(C_{2k})$, in \Cref{fig:M3-8} a presentation of $M_{3}(C_{8})$ is given. If for each of the edge going through the cross-cap we switch at one end and contract it, and then, remove all vertices of degree 2, the result, denoted $BM_{\ell}(C_{2k})$, is a signed bipartite graph embedded on the projective where all faces are positive $4$-cycles and negative girth is $\min \{2l, 2k \}$. A general presentation of the these signed graphs is given in \Cref{fig:BMl-2k} and a specific example is in \Cref{fig:BM4-8}. The signed bipartite graph $BM_{k}(C_{2k})$ is of negative girth $2k$ and is on $2k^2-k+1$ vertices.    

\begin{figure}[!htb]
	\centering

			\begin{minipage}{.4\textwidth}
			
			\scalebox{1.3}{
				\tikzset{math3d/.style= {x={(2cm,0cm)}, y={(0cm,2cm)}}}
				\begin{tikzpicture}[math3d]
					\newcommand{\n}{16}
					\newcommand{\h}{7}
					\newcommand{\ch}{12}
					\newcommand{\rl}{1}
					\newcommand{\rh}{1}

					\path[draw, gray] plot[domain=0:2*pi,samples=4*\n] ({\rh*cos(\x r)}, {\rh*sin(\x r)}, 3.5);
					
					\foreach \i in {1,...,6}{
						\path[draw, dashed, gray] plot[domain=0:2*pi,samples=4*\n] ({\rh*cos(\x r)}, {\rh*sin(\x r)}, .5*\i);
					}

					\foreach \i in {1,3,5}{
						\foreach \t in {1,...,\n} {
							
							\draw [thick, red, opacity=0.2] ( ({(\rl)*cos((2*\t+1)*pi/\n r)},{\rl*sin((2*\t+1)*pi/\n r)},.5*\i) -- ({\rh*cos((2*\t+1)*pi/\n r-\ch)},{\rh*sin((2*\t+1)*pi/\n r-\ch)},.5*\i+.5)  -- cycle;
							
							\draw [ thick, red, opacity=0.2] ( ({(\rl)*cos((2*\t+1)*pi/\n r)},{\rl*sin((2*\t+1)*pi/\n r)},.5*\i) -- ({\rh*cos((2*\t+1)*pi/\n r+\ch)},{\rh*sin((2*\t+1)*pi/\n r+\ch)},.5*\i+.5)  -- cycle;
						}

					}
					
					\foreach \i in {2,4,6}{
						\foreach \t in {1,...,\n} {
							
							\draw [ thick, red, opacity=0.2] ( ({(\rl)*cos((2*\t+1)*pi/\n r+\ch)},{\rl*sin((2*\t+1)*pi/\n r+\ch)},.5*\i) -- ({\rh*cos((2*\t+1)*pi/\n r)},{\rh*sin((2*\t+1)*pi/\n r)},.5*\i+.5)  -- cycle;
							
							\draw [ thick, red, opacity=0.2] ( ({(\rl)*cos((2*\t+1)*pi/\n r-\ch)},{\rl*sin((2*\t+1)*pi/\n r-\ch)},.5*\i) -- ({\rh*cos((2*\t+1)*pi/\n r)},{\rh*sin((2*\t+1)*pi/\n r)},.5*\i+.5)  -- cycle;
						}
					}

					\foreach \t in {1,...,\n} {
						
						\draw [  red, opacity=1] ( ({(cos((2*\t+14)*pi/\n r+\ch)},{sin((2*\t+14)*pi/\n r+\ch)},3.5) -- (-.5,-.5)  -- cycle;
						
					}
					\draw node[circle, red, inner sep=0mm, draw=black!80, fill=white, minimum size=1mm] at (-.5,-.5) {};
				\end{tikzpicture}
			}
		\end{minipage}
		\begin{minipage}{.4\textwidth}
			\scalebox{1.3}{
				\tikzset{math3d/.style= {x={(2cm,0cm)}, y={(0cm,2cm)}}}
				\begin{tikzpicture}[math3d]
					\newcommand{\n}{16}
					\newcommand{\h}{7}
					\newcommand{\hl}{8}
					\newcommand{\ch}{12}
					\newcommand{\rl}{1}
					\newcommand{\rh}{1}
					
					
					\foreach \i in {1,...,\h}{
						\path[draw, dashed, gray] plot[domain=0:2*pi,samples=4*\n] ({\rh*cos(\x r)}, {\rh*sin(\x r)}, .5*\i);

					}
					\foreach \i in {1,3,5}{
						\foreach \t in {1,...,\n} {
							
							\draw [thick, red, opacity=0.2] ( ({(\rl)*cos((2*\t+1)*pi/\n r)},{\rl*sin((2*\t+1)*pi/\n r)},.5*\i) -- ({\rh*cos((2*\t+1)*pi/\n r-\ch)},{\rh*sin((2*\t+1)*pi/\n r-\ch)},.5*\i+.5)  -- cycle;
							
							\draw [ thick, red, opacity=0.2] ( ({(\rl)*cos((2*\t+1)*pi/\n r)},{\rl*sin((2*\t+1)*pi/\n r)},.5*\i) -- ({\rh*cos((2*\t+1)*pi/\n r+\ch)},{\rh*sin((2*\t+1)*pi/\n r+\ch)},.5*\i+.5)  -- cycle;
						}

					}
					
					\foreach \i in {2,4,6}{
						\foreach \t in {1,...,\n} {
							
							\draw [ thick, red, opacity=0.2] ( ({(\rl)*cos((2*\t+1)*pi/\n r+\ch)},{\rl*sin((2*\t+1)*pi/\n r+\ch)},.5*\i) -- ({\rh*cos((2*\t+1)*pi/\n r)},{\rh*sin((2*\t+1)*pi/\n r)},.5*\i+.5)  -- cycle;
							
							\draw [ thick, red, opacity=0.2] ( ({(\rl)*cos((2*\t+1)*pi/\n r-\ch)},{\rl*sin((2*\t+1)*pi/\n r-\ch)},.5*\i) -- ({\rh*cos((2*\t+1)*pi/\n r)},{\rh*sin((2*\t+1)*pi/\n r)},.5*\i+.5)  -- cycle;
						}
					}

						%
						%
					
					\foreach \t in {1,...,\n} {
						
						\draw node[circle, red, inner sep=0mm, draw=black!80, fill=white, minimum size=.5mm] at (({cos((2*\t+14)*pi/\n r+\ch)},{sin((2*\t+14)*pi/\n r+\ch)},.5)  (x\t) {};

					}
					
					\foreach \t in {1,...,\hl} {
						
						\draw node[circle, red, inner sep=0mm, draw=black!80, fill=white, minimum size=1mm] at (({cos((1.725*\t+12)*pi/\n r+\ch)/1.6},{sin((1.725*\t+12)*pi/\n r+\ch)/1.8})  (y\t){};

					}
					
					\foreach \i/\j in { 15/1, 16/1,16/2, 1/2,1/3,2/3,2/4, 3/4, 3/5, 4/5, 4/6, 5/6, 5/7, 6/7, 6/8, 7/8} {
						
						\draw[line width=0.2mm, red ] (x\i) -- (y\j) ;}
					
					\foreach \i/\j in { 15/8, 14/8, 14/7, 13/7, 13/6, 12/6, 12/5, 11/5, 11/4, 10/4, 10/3, 9/3, 9/2, 8/2, 8/1, 7/1} {
						
						\draw[line width=0.2mm,blue] (x\i) -- (y\j) ;}
					
				\end{tikzpicture}
			}
		\end{minipage}  
	\caption{$\widehat{BM_{\ell}(C_{2k})}$}
	\label{fig:BMl-2k}
\end{figure}

\begin{figure}[!htb]
	\centering
	\scalebox{1}{			\begin{tikzpicture}[rotate=-22.5]

%
%
%
%
%
%

			\draw node[line width=0.05mm, circle, fill=white, minimum width=.5mm, inner sep=0mm, draw=black!80, minimum size=1mm]  (90) at ({90}:0){};	
			
			
			\foreach \i in {9,...,16}
			{ \draw node[line width=0.05mm, circle, fill=white, minimum width=.5mm, inner sep=0mm, draw=black!80, minimum size=1mm]  (\i) at ({\i*360/8+180/8}:2.0){};}
			
			\foreach \i in {17,...,24}
			{ \draw node[line width=0.05mm, circle, fill=white, minimum width=.5mm, inner sep=0mm, draw=black!80, minimum size=1mm]  (\i) at ({\i*360/8}:4.0){};}
			
			\foreach \i in {25,...,32}
			{ \draw node[line width=0.05mm, circle, fill=white, minimum width=.5mm, inner sep=0mm, draw=black!80, minimum size=1mm]  (\i) at ({\i*360/8+180/8}:6.0){};}
			
			\foreach \i in {33,...,36}
			{ \draw node[line width=0.05mm, circle, fill=white, minimum width=.5mm, inner sep=0mm, draw=black!80, minimum size=1mm]  (\i) at ({mod(\i,8)*360/8}:8.0){};}
			
			\foreach \i in {41,...,56}
			{ \draw node[line width=0.05mm, circle, fill=white, minimum width=.5mm, inner sep=0mm, draw=black!80, minimum size=0mm]  (\i) at ({mod(\i,16)*360/16+180/16}:10.0){};}
			
			\foreach \i / \j in {9/90,10/90, 11/90,12/90,13/90,14/90,15/90,16/90,
				9/17,10/18,11/19,12/20, 13/21,14/22, 15/23,16/24,
				18/9,19/10,20/11,21/12,22/13,23/14,24/15,17/16,
				17/25,18/26,19/27,20/28,21/29,22/30,23/31,24/32,
				25/18,26/19,27/20,28/21,29/22,30/23,31/24,32/17, 
				25/33,26/34,27/35,28/36,
				33/32,34/25,35/26,36/27}{
				\draw [ thick, red, line width=0.15mm, opacity=0.9] (\i) -- (\j);}

			\foreach \i / \j in {33/50,34/52,35/54,36/56, 33/49, 34/51,35/53, 36/55,
			28/41, 29/43, 30/45, 31/47,
			29/42,30/44,31/46,32/48}{
				\draw [ thick, blue, line width=0.15mm, opacity=0.9] (\i) -- (\j);}

			\draw  [ dashed, gray, line width=0.15mm, opacity=0.2] (0,0) circle (2.0cm);
			\draw  [ dashed, gray, line width=0.15mm, opacity=0.2] (0,0) circle (4.0cm);
			\draw  [ dashed, gray, line width=0.15mm, opacity=0.2] (0,0) circle (6.0cm);
			\draw  [ dashed, gray, line width=0.15mm, opacity=0.2] (0,0) circle (8.0cm);
			\draw  [ dashed, black, line width=0.15mm, opacity=1] (0,0) circle (10.0cm);

			\end{tikzpicture}}        
	\caption{$\widehat{BM_{4}(C_8)}$}
	\label{fig:BM4-8}
\end{figure}

\subsection{Positive triangulations of the projective plane}

To build example of signed projective planar graphs where all faces are positive triangles, we may take any signed qudrangulation where each face is positive 4-cycle and inside each face insert an edge connecting one of the diagonal pairs. Then we assign one of the two possible signs to the new edge such that the two resulting triangles are positive. Having done this for all the faces, we will have a triangulation of the projective plane where all the faces are positive triangles. If the quderangulation we started with is not balanced, then, by \Cref{cor:triangulation}, the resulting triangulation would be of circular chromatic number 6.
The choices made in connecting one of the two diagonal pairs of each face may overall impact the negative girth of the final result. Nevertheless, the negative girth of the result would never be less than half the negative girth of the quadrangulation we have started with.  In practice,  a better choice of connecting diagonal faces may result in a positive triangulation whose negative girth is significantly larger that the half of the negative girth of the quadrangulation
we have started with. \Cref{fig:BM4-7-tri} present an example of better choice of diagonal edges.
In \Cref{fig:girth3,fig:girth4,fig:girth5} we have best possible examples of girth 3, 4, and 5. 	

\begin{figure}[!htb]
	\centering
	\scalebox{2}{\input{Tikzit/BM4C7triangulated.tikz}}        
	\caption{A triangulation based on $BM_4(C_7)$ whose negative girth is 7.}
	\label{fig:BM4-7-tri}
\end{figure}

In conclusion, for every integer $k$, there is a positive triangulation of the projective plane whose negative girth is at least $k$. Thus in particular we have infinitely many inclusion-wise minimal signed simple graphs embedded on the projective plane whose circular chromatic number is 6. This is in contrast with the graph case where $K_6$ is the only 6-chromatic graphs on the projective plane, and also in contrast with the result of Thomassen \cite{T97} who showed that the number minimal 6-chromatic graphs on any surface is finite. 

A lower bound on the number of vertices of a 6-chromatic signed graph of negative girth at least $k$ is implied in \cite{W25}. Further improvement together with a proof that some of the construction presented here are critical, meaning their circular chromatic number drops to 4 by removal of any edge, would be addressed in a forthcoming work.

\begin{figure}[ht]
	\centering
	\scalebox{1}{	\begin{minipage}[t]{1\textwidth}
		\centering
		\scalebox{1.2}{
			\begin{tikzpicture}
				
			\foreach \i in {1,2,3,4,5}
			{ \draw node[line width=0.05mm, circle, fill=white, minimum width=1mm, inner sep=0mm, draw=black!80, minimum size=1mm]  (\i) at ({(\i-1)*72+18}:1.8){};}
			
			\draw node[line width=0.05mm, circle, fill=white, minimum width=1mm, inner sep=0mm, draw=black!80, minimum size=1mm]  (0) at ({0}:0){};
			
			\draw  [ dashed, gray, line width=0.15mm, opacity=0.2] (0,0) circle (1.8cm);
			
			\draw  [ dashed, gray, line width=0.15mm, opacity=0.2] (0,0) circle (3cm);

			\foreach \i in {1,2,3,4,5}
			{\draw [ thick, blue, line width=0.15mm, opacity=0.9] (0) -- (\i);}
			
			\foreach \i\j in {1/2,2/3,3/4,4/5,5/1}
			{\draw [ thick, blue, line width=0.15mm, opacity=0.9] (\i) -- (\j) ;}
			
			\foreach \i in {1,2,3,4,5}
			{\draw [ thick, red, line width=0.15mm, opacity=0.9] ({(\i-1)*72}:3)--(\i)--({(\i-1)*72+36}:3);}
				
			\end{tikzpicture}
		}

	\end{minipage}} 
	\caption{A positive triangulation with negative girth 3}
	\label{fig:girth3}	
\end{figure}

\begin{figure}[ht]
	\centering
	\scalebox{1}{	\begin{minipage}[t]{1\textwidth}
		\centering
		\scalebox{1.4}{
			\begin{tikzpicture}
				
			\foreach \i in {1,2,3,4}
			{ \draw node[line width=0.05mm, circle, fill=white, minimum width=.5mm, inner sep=0mm, draw=black!80, minimum size=1mm]  (\i) at ({(\i-1)*90}:1.8){};}
			
			\foreach \i in {5,6,7,8}
			{ \draw node[line width=0.05mm, circle, fill=white, minimum width=.5mm, inner sep=0mm, draw=black!80, minimum size=1mm]  (\i) at ({(\i-5)*90+45}:3){};
			}
			
			\foreach \i in {9,10}
			{ \draw node[line width=0.05mm, circle, fill=white, minimum width=.5mm, inner sep=0mm, draw=black!80, minimum size=1mm]  (\i) at ({(\i-9)*90}:4){};
			}

			\draw node[line width=0.05mm, circle, fill=white, minimum width=.5mm, inner sep=0mm, draw=black!80, minimum size=1mm]  (0) at ({0}:0){};
			
			\draw  [ dashed, gray, line width=0.15mm, opacity=0.2] (0,0) circle (1.8cm);
			\draw  [ dashed, gray, line width=0.15mm, opacity=0.2] (0,0) circle (3cm);
			\draw  [ dashed, gray, line width=0.15mm, opacity=0.2] (0,0) circle (4cm);
			\draw  [ dashed, gray, line width=0.15mm, opacity=0.2] (0,0) circle (5cm);

			\foreach \i in {1,2,3,4}
			{\draw [ thick, blue, line width=0.15mm, opacity=0.9] (0) -- (\i);}
			
			\foreach \i\j in {1/2,2/3,3/4,4/1}
			{\draw [ thick, blue, line width=0.15mm, opacity=0.9] (\i) -- (\j) ;}
			
			\foreach \i\j in {5/1,5/2,6/2,6/3,7/3,7/4,8/4,8/1}
			{\draw [ thick, blue, line width=0.15mm, opacity=0.9] (\i) -- (\j) ;}
			
			\foreach \i\j in {9/1,9/5,9/8,10/2,10/5,10/6}
			{\draw [ thick, blue, line width=0.15mm, opacity=0.9] (\i) -- (\j) ;}

			\foreach \i in {1,2,3}
			{	\draw [ thick, red, line width=0.15mm, opacity=0.9] 
				(9)--({(\i-2)*22.5}:5);
				\draw [ thick, red, line width=0.15mm, opacity=0.9] 
				(10)--({(\i-2)*22.5+90}:5);
				\draw [ thick, red, line width=0.15mm, opacity=0.9] 
				(7)--({(\i-2)*22.5+225}:5);
			}

			\draw [ thick, red, line width=0.15mm, opacity=0.9] 
			(5)--(45:5);
			\draw [ thick, red, line width=0.15mm, opacity=0.9] 
			(3)--(180:5);
			\draw [ thick, red, line width=0.15mm, opacity=0.9] 
			(4)--(270:5);
			\draw [ thick, red, line width=0.15mm, opacity=0.9] 
			(6)--(135:5);
			\draw [ thick, red, line width=0.15mm, opacity=0.9] 
			(8)--(315:5);
			
			\draw [ thick, red, line width=0.15mm, opacity=0.9] 
			(6)--(157.5:5);
			\draw [ thick, red, line width=0.15mm, opacity=0.9] 
			(8)--(292.5:5);
			\end{tikzpicture}
		}

	\end{minipage}} 
	\caption{A positive triangulation of negative girth 4}
	\label{fig:girth4}	
\end{figure}

\begin{figure}[ht]
	\centering
    \scalebox{1}{	\begin{minipage}[t]{1\textwidth}
		\centering
		\scalebox{1.6}{
			\begin{tikzpicture}
				\foreach \i in {1,2,3,4,5}
				{ \draw node[line width=0.05mm, circle, fill=white, minimum width=1mm, inner sep=0mm, draw=black!80, minimum size=1mm]  (\i) at ({(\i-1)*72+90}:1.8){};}

				\foreach \i in {6,7,8,9,10}
				{ \draw node[line width=0.05mm, circle, fill=white, minimum width=1mm, inner sep=0mm, draw=black!80, minimum size=1mm]  (\i) at ({(\i-1)*72+54}:3){};}

				\foreach \i in {11,12,13,14,15}
				{ \draw node[line width=0.05mm, circle, fill=white, minimum width=1mm, inner sep=0mm, draw=black!80, minimum size=1mm]  (\i) at ({(\i-1)*72+90}:4){};}

				\draw node[line width=0.05mm, circle, fill=white, minimum width=1mm, inner sep=0mm, draw=black!80, minimum size=1mm]  (0) at ({0}:0){};
				\draw  [ dashed, gray, line width=0.15mm, opacity=0.2] (0,0) circle (5cm);
				\draw  [ dashed, gray, line width=0.15mm, opacity=0.2] (0,0) circle (4cm);
				
				\draw  [ dashed, gray, line width=0.15mm, opacity=0.2] (0,0) circle (3cm);
				
				\draw  [ dashed, gray, line width=0.15mm, opacity=0.2] (0,0) circle (1.8cm);

				\foreach \i in {1,2,3,4,5}
				{\draw [ thick, blue, line width=0.15mm, opacity=0.9] (0) -- (\i);}
				
				\foreach \i\j in {1/2,2/3,3/4,4/5,5/1}
				{\draw [ thick, blue, line width=0.15mm, opacity=0.9] (\i) -- (\j) ;}
				
				\foreach \i\j in {6/1,6/5,7/1,7/2,8/2,8/3,9/3,9/4,10/4,10/5}
				{\draw [ thick, blue, line width=0.15mm, opacity=0.9] (\i) -- (\j) ;}
				
				\foreach \i\j in {11/1,11/6,11/7,12/2,12/7,12/8,13/3,13/8,13/9,14/4,14/9,14/10,15/5,15/10,15/6}
				{\draw [ thick, blue, line width=0.15mm, opacity=0.9] (\i) -- (\j) ;}
				
				\foreach \i in {11,12,13,14,15}
				{
					\foreach \j in {1,2,3}
					{	
						\draw [ thick, red, line width=0.15mm, opacity=0.9] 
						(\i)--({(\j-2)*18+(\i-11)*72+90}:5);
					}		
				}
				
				\foreach \i in {6,7,8,9,10}
				{	
					\draw [ thick, red, line width=0.15mm, opacity=0.9] 
					(\i)--({(\i-6)*72+54}:5);
				}					
			\end{tikzpicture}
		}

	\end{minipage}} 
	\caption{A positive triangulation of negative girth 5}
	\label{fig:girth5}	
\end{figure}
  
  \FloatBarrier
  
\section{Remarks}
 
 We conclude with the following questions.
  
  \begin{problem}
  	Given an integer $k$, $k\geq 3$, what is the order of smallest signed graph whose negative girth is at least $k$ and whose circular chromatic number is at least 6. 
  \end{problem}
  
  \begin{problem}
  	What is the complexity of the following decision problem.\\
  	Input: A signed simple graph $\widehat{G}$ on the projective plane.\\
  	Output: YES if $\chi_c(G)=6$, NO otherwise (i.e., $\chi_c(G)< 6$). 
  \end{problem}

\textbf{Acknowledgments}~~~ The first part of the work has greatly benefited from discussion with G\'{a}bor Tardos and late G\'{a}bor Simonyi and the first two authors during their visit of 2023. This work has received support under the program ``Investissement d'Avenir" launched by the French Government and implemented by ANR, with the reference ``ANR‐18‐IdEx‐0001" as part of its program ``Emergence". Lujia Wang is partially supported by NSFC Grant No. 12371359.

\bibliographystyle{plain}
\bibliography{references}

\end{document}